\newtheorem{lemma}{Lemma}[section]
\newtheorem{theorem}{Theorem}
\theoremstyle{definition}
\newtheorem{definition}[lemma]{Definition}
\theoremstyle{definition}
\newtheorem{remark}[lemma]{Remark}
\theoremstyle{definition}
\global\let\AddToReset=\@addtoreset}
\begin{document}

\subjclass[2010]{Primary 35B35, 65M60 ; Secondary 35P20, 74S05, 93D15}
\keywords{beam equation \and boundary feedback control \and asymptotic stability \and dissipative Galerkin method \and error estimates}

\title[Beam Stability and Simulation]{A piezoelectric Euler-Bernoulli beam  
with dynamic boundary control: stability and dissipative FEM}

\thanks{The authors were supported by the doctoral school \emph{PDE-Tech} 
of TU Wien and the FWF-project I395-N16. The authors acknowledge a sponsorship by \emph{Clear Sky Ventures}. 
We are grateful to 
A.~Kugi and T.~Meurer for introducing us to this topic, their help, and the many
stimulating discussions.}

\author[M. Miletic]{Maja Miletic} \address{Institute for Analysis and
 Scientific Computing, Technical University Vienna, Wiedner
 Hauptstr. 8, A-1040 Vienna, AUSTRIA}
\email{mmiletic@asc.tuwien.ac.at, anton.arnold@tuwien.ac.at}

\author[A. Arnold]{Anton Arnold}

\begin{abstract}
We present a mathematical and numerical analysis on a control model for the time evolution 
of a multi-layered piezoelectric cantilever with tip mass and moment of inertia, as developed by Kugi and Thull \cite{Kugi:Thull}. 
This closed-loop control system consists of the inhomogeneous Euler-Bernoulli beam equation coupled to an ODE system that is designed to track both the position and angle of the tip mass for a given reference trajectory.
This dynamic controller only employs first order spatial derivatives, in order to make the system technically realizable with piezoelectric sensors.
{}From the literature it is known that it is asymptotically stable \cite{Kugi:Thull}. But in a 
refined analysis we first prove that this system is \emph{not exponentially} stable. 

In the second part of this paper, we construct a dissipative finite element method, based on piecewise cubic Hermitian 
shape functions and a Crank-Nicolson time discretization. For both the spatial semi-discretization and the full $x-t$--discretization we prove that the numerical method is structure preserving, i.e.~it dissipates energy,
analogous to the continuous case. Finally, we derive error bounds for both cases and illustrate the predicted convergence rates in a simulation example.
\end{abstract}

\maketitle                 


\section{Model}

The Euler-Bernoulli beam (EBB) equation with tip mass is a well-established model with a wide range of applications: for oscillations in telecommunication antennas, or satellites with flexible appendages \cite{Bal-Tay,Banks:Rosen:2},  flexible wings of micro air vehicles \cite{Chakravarthy:Evans:Evers}, and even vibrations of tall buildings due to external forces \cite{Prosper}. 
The interest of engineers and mathematicians in the corresponding control problems started in the 1980s.
So various boundary control laws have been devised and mathematically analyzed in the literature -- with the stabilization of the system being a key objective (cf.~\cite{Lit-Mar}). 
Soon afterwards, also exponentially stable controllers were developed which require, however, higher order boundary controls for an EBB with both applied tip mass and moment of inertia  \cite{Rao}. On the other hand, if only a tip mass is applied, lower order controls are sufficient for exponential stabilization \cite{ChW2007}.
In spite of this progress, and due to its widespread technological applications, considerable research on EBB-control problems is still underway:
In the more recent papers \cite{Guo-Wan,Guo2002} exponential stability of related control systems was established by verifying the Riesz basis property.
For the exponential stability of a more general class of boundary control systems (including the Timoshenko beam) in the port-Hamiltonian approach we refer to \cite{VZLGM2009}.

We shall analyze an inhomogeneous multi-layered piezoelectric EBB with applied tip mass and moment of inertia, 
coupled to a dynamic controller that uses only low order boundary measurements.
This system was introduced by Kugi and Thull in 
\cite{Kugi:Thull} to independently control the tip position and the tip angle of a piezoelectric cantilever 
along prescribed trajectories. 
This beam is composed of piezoelectric layers
and the electrode shape of the layers was used as an additional degree of 
freedom in the controller design. The sensor layers were given rectangular and
triangular shaped electrodes, so that the charge measured is proportional to the 
tip deflection and the tip angle, respectively. The actuator layers were also assumed
to be covered with rectangular and triangular shaped electrodes, with the following 
motivation: A voltage
supplied to an actuator with rectangular (or triangular) shaped electrodes acts in the
same way on the structure as a bending moment (or force) at the tip of the beam.
The key issue of \cite{Kugi:Thull} was to devise a stable feedback control model 
for that beam, such that it evolves asymptotically (as $t\to\infty$) as a prescribed reference trajectory.
More precisely, that controller allows to track the position and the angle of the tip mass at the same time.
To solve the trajectory planning task, the concept of differential flatness (cf.~\cite{Balas}) 
was employed. Thereby, the control inputs and the beam bending deflection were
parametrized by the flat outputs and their time derivatives. 
The boundary controller constructed there has a dynamic design, 
thus coupling the governing PDEs of the beam with a system of ODEs in the feedback
part. In order to render the system experimentally and technically realizable, it 
is crucial that the controller only involves boundary measurements up to the first 
spatial derivative -- at the (small) price of loosing \emph{exponential} stability 
(as we shall see here below).

The goal of the present paper is first to complete the analysis of \cite{Kugi:Thull}, 
proving that this hybrid system is asymptotically stable but \emph{not} exponentially 
stable. This part is an extension of Rao's analysis \cite{Rao} to dynamic controllers 
and inhomogeneous beams.
In our second, and in fact main part we shall develop and analyze a dissipative finite 
element method (FEM) for the control system. 

Now we specify the problem under consideration, an inhomogeneous EBB of length 
$L$, clamped at the left end $x=0$, and with tip mass, moment of inertia, and boundary control at $x=L$.
In the following linear system (\ref{model1})--(\ref{model5}), we actually consider the evolution of 
the trajectory error system. So, $u(t,x)$ denotes the deviation of the actual beam deflection from the 
desired reference trajectory. Similarly, $\Theta_{1,2}(t)$ denote the difference between the applied voltages 
to the electrodes of the piezoelectric layers and 
the ones specified by the feedforward controller. 
\begin{eqnarray} \label{model1}
   \mu(x)  u_{tt}  + (\Lambda(x) u_{xx})_{xx} & = & 0, 
 \quad 0 < x < L, \, t > 0,\\  
\label{model2} u (t,0) & = & 0, \quad t>0,\\ 
\label{model3}
  u_{x} (t,0) & = & 0, \quad t>0,  \\  
\label{model4} J   u_{xtt} (t, L) + 
(\Lambda u_{xx}) (t , L) + \Theta_{1}(t) &  = & 0,  \quad t>0 ,\\ 
M  u_{tt} 
\label{model5}
 (t, L) - (\Lambda u_{xx})_x (t , L) + \Theta_{2}(t) &  = & 0, \quad t>0.
\end{eqnarray}
Here, $\mu \in C^4[0,L]$ denotes the linear mass density of the beam and $\Lambda \in C^4[0,L]$ 
is the flexural rigidity of the beam. 
Both functions are assumed to be strictly positive and bounded. 
$M$ and $J$ denote, respectively, the mass and the moment of inertia of the rigid body attached at $x=L$. 
Equation (\ref{model4}) states that the beam bending moment at $x=L$
(i.e.\ $\Lambda(L) u_{xx}(t,L)$) plus the bending moment of the tip body 
(i.e.\ $J u_{xtt} (t, L) $) is balanced by the control input $-\Theta_1$. 
Similarly, (\ref{model5}) describes that the total force at the free end, equal to shear force at the tip 
(i.e.\ $ - (\Lambda u_{xx})_x(t , L)$) plus the tip mass force $ M  u_{tt}$, cancels with the control input $\Theta_2$.

The proposed control law has the goal to drive 
the error system to the zero state as $t\to\infty$. It reads:
\begin{equation} \label{control_law}
\begin{array}{rcl} (\zeta_{1})_t(t)  & = & A_{1} \zeta_{1}(t) + 
b_{1}  u_{xt} (t, L), \\  
(\zeta_{2})_t(t)  & = & A_{2} \zeta_{2}(t) + b_{2} u_{t} (t, L), \\  
\Theta_{1} (t) &  = & k_{1}  u_{x} (t, L) + 
c_{1} \cdot \zeta_{1} (t) + d_{1}  u_{xt} (t, L),  \\  
\Theta_{2}(t) &  = & k_{2} u (t, L) + c_{2} \cdot \zeta_{2} (t) + d_{2}  u_{t} (t, L), 
\end{array}
\end{equation}
with the auxiliary variables $\zeta_1, \zeta_2 \in C([0, \infty ) ; \mathbb{R}^n)$
 and $\Theta_1, \Theta_2 \in C[0, \infty)$. Moreover, $A_1, A_2 \in \mathbb{R}^{n
 \times n}$ are Hurwitz\footnote{A square matrix is called a Hurwitz matrix if 
all its eigenvalues have negative real parts.} matrices, $b_1, b_2, c_1, c_2 
\in \mathbb{R}^n$ 
vectors and $k_1, k_2, d_1, d_2 \in \mathbb{R}$.
We assume that the coefficients $k_1$ and $k_2$ are positive and that
 the transfer functions $\mathcal{G}_j (s) =  (s I - A_j)^{-1} b_j \cdot c_j
 + d_j, \, j=1,2$ satisfy \[ Re({\mathcal{G}}_j (i \omega )) \ge d_j \ge \delta
_j > 0 
\quad \forall \omega \ge 0, \; j= 1, 2\] for some constants 
$\delta_1$ and $\delta_2$. These assumptions imply that the transfer function is
 strictly positive real, or shortly SPR (for its definition we refer to \cite{Khalil}, \cite{Luo:Guo:Morgul}). 
Then, it follows from the Kalman-Yakubovic-Popov
 Lemma (see \cite{Khalil}, \cite{Luo:Guo:Morgul}) that there exist symmetric positive definite
 matrices $P_j$, positive scalars $\epsilon_j$, and vectors $q_j\in \mathbb{R}^n$ such that  
\begin{equation} \label{kyp}
\begin{aligned}
P_j A_j + A_j^{\top} P_j & =  - q_j q_j^{\top} - \epsilon_j P_j, 
 \\ P_j b_j & = c_j - q_j \sqrt{2 (d_j - \delta_j)},
\end{aligned}
\end{equation}
for $j=1,2.$ A SPR controller is defined as a controller with SPR transfer function. 
One motivation for this controller design is the fact that, in the 
finite dimensional case, the feedback interconnection of a passive system with 
a SPR controller yields a stable closed-loop system. 
This principle of passivity based controller 
design was generalized to the trajectory error dynamics of the multi-layered piezoelectric 
cantilever in \cite{Kugi:Thull}. 
 
(\ref{model1})--(\ref{control_law}) constitute a coupled 
PDE--ODE system for the beam deflection $u(x, t)$, the position of its 
tip $u (t, L)$, and its slope $u_x(t, L)$, as well as the two control variables $\zeta_1(t)$, 
$\zeta_2(t)$. The main mathematical difficulty of this system stems from the high order boundary conditions 
(involving both $x$- and $t$- derivatives) which makes the analytical and numerical treatment far from obvious.
 Well-posedness of this system and asymptotic stability of the 
zero state were established in \cite{Kugi:Thull} using semigroup theory on an equivalent first order system (in time), a carefully designed 
Lyapunov functional, and LaSalle's invariance principle.

In \S\ref{s2} we shall prove that this unique steady 
state is \emph{not} exponentially stable. 
Let us compare this result to a similar system studied in \cite{Morgul92} and \S5.3 of \cite{Luo:Guo:Morgul}, 
which also consists of an EBB coupled to a passivity based dynamic boundary control, 
but without the tip mass. Then, that system is exponentially stable.

As an introduction for our dissipative finite 
element method (FEM) in \S\ref{S-FEM}, we shall now briefly review several 
numerical strategies for the EBB from the literature.
In \cite{Tzes:Yurkovic:Langer} the authors propose a conditionally stable, central difference method 
for both the space and time discretization of the EBB equation. Their system models a beam, which has a 
tip mass with moment of inertia on the free end. At the fixed end a boundary control is applied in 
form of a control torque. Due to higher order boundary conditions, fictitious nodes are needed at both boundaries.   
In \cite{Dadfarnia:Jalili:Xian:Dawson} the authors 
consider a damped, translationally cantilevered EBB, with one end clamped into a moving base (as a boundary control) 
and a tip mass with moment of inertia placed at the other. {}For their numerical treatment they considered a finite number  of modes, thus obtaining an ODE system. 
In \cite{Liang:Chen:Guo} the EBB with one free end (without tip mass, but with boundary torque control) was solved 
in the frequency domain: After Laplace transformation in time, the resulting ODEs could be solved explicitly.

The more elaborate approaches are based on FEMs: In \cite{Bar-Yoseph:Fisher:Gottlieb} two space-time spectral element 
 methods are employed to solve a simply supported, nonlinear, modified EBB
subjected to forced lateral vibrations but with no mass attached: There, Hermitian  polynomials, both in space and time, 
lead to strict stability limitations. But a mixed discontinuous 
Galerkin formulation with Hermitian cubic polynomials in space and Lagrangian spectral
 polynomials in time yields an unconditionally stable scheme.  In \cite{Choo:Chung} the authors present 
a semi-discrete (using cubic splines) and fully discrete Galerkin scheme (based on the Crank-Nicolson 
method) for the strongly damped, extensible beam equation with both ends hinged. \cite{Banks:Rosen} 
considers a EBB with tip mass at the free end, yielding a conservative hyperbolic system. 
The authors analyze a cubic B-spline based Galerkin method (including convergence analysis of the spatial semi-discretization) 
and put special emphasis on the subsequent parameter identification problem. 

All these FEMs are for models \emph{without boundary control}. 
Hence, we shall develop here a novel FEM for the mixed boundary control problem (\ref{model1})-(\ref{control_law}). 
There, the damping only appears due to the 
boundary control. Hence, our main focus will be on preserving the correct large-time behavior (i.e.\ dissipativity) in the numerical scheme. 
Our FEM is based on the second order (in time) EBB equation (\ref{model1}) and special care is taken for the boundary coupling to the ODE. 
In time we shall use a Crank-Nicolson discretization, which was also the appropriate approach for the decay of discretized parabolic equations in \cite{Arnold:Unterreiter}.
We remark that the modeling and discretization of boundary control systems as port-Hamiltonian systems also has this flavor of preserving the structure: For a general methodology on this spatial semi-discretization (leading to mixed finite elements) and its application to the telegrapher's equations we refer to \cite{GTSM2004}.

The paper is organized as follows: 
In \S\ref{s2} we first review the analytic setting from \cite{Kugi:Thull} for the EBB with boundary control. While this closed-loop system is asymptotically stable, we prove that it is 
\emph{not} exponentially stable. Towards this analysis we derive the asymptotic behavior of the eigenvalues and eigenfunctions of the coupled system.
In \S\ref{S-FEM} we first discuss the weak formulation of our control system. Then we
 develop an unconditionally stable FEM (along with a Crank-Nicolson scheme in time), which dissipates 
an appropriate energy functional independently of the chosen FEM basis. We shall also derive error estimates 
(second order in space and time) of our scheme.
In the numerical simulations of \S\ref{S-simul} we illustrate the proposed method and 
verify its order of convergence w.r.t. $h$ and $\Delta t$.

\section{Non-exponential decay}\label{s2}

First we recall from \cite{Kugi:Thull} the analytical setting for 
(\ref{model1})--(\ref{control_law}) in the framework of semigroup theory. 
To cope with the higher order boundary conditions
\eqref{model4}, \eqref{model5} and the boundary terms on the r.h.s.\ of (\ref{control_law}), the terms 
$u_{t}(t,L)$, $u_{xt}(t,L)$ were introduced
as separate variables (following the spirit in earlier works \cite{Lit-Mar,Guo2002}). 
More precisely, $\psi = M v(L)$ is the vertical momentum of the tip mass and $\xi = J v_{x}(L)$ 
its angular momentum, where $v = u_t$ is the velocity of the beam. Hence, we define the Hilbert space 
$$\mathcal{H}  
:= \{ z = (u, v, \zeta_{1}, 
\zeta_{2}, \xi, \psi)^\top \colon u \in \tilde H^{2}_{0}(0, L), v \in L^{2}
 (0,L), \zeta_{1}, \zeta_{2} \in \mathbb{R}^{n}, \xi, \psi \in
 \mathbb{R} \},
$$ 
where $\tilde H^{k}_{0}(0,L) := \{ u \in 
H^{k}(0,L) | \quad u(0)= u_x(0) = 0 \}$, with the inner product 
\begin{eqnarray}
 \langle z, \breve{z} \rangle & := & \frac{1}{2} \int_{0}^{L} 
 \Lambda u_{xx} \breve{u}_{xx} \,dx + \frac{1}{2} \int_{0}^{L} \mu v
 \breve{v} \,dx + \frac{1}{2 J} \xi \breve{\xi} + \frac{1}{2 M} \psi 
\breve{\psi} \nonumber \\ & + & \frac{1}{2} k_{1} u_x(L) 
\breve{u}_x(L) + \frac{1}{2} k_{2} u(L) \breve{u}(L) + \frac{1}{2} 
\zeta_{1}^{\top} P_{1} \breve{\zeta}_{1} + 
\frac{1}{2} \zeta_{2}^{\top} P_{2} \breve{\zeta}_{2}, \nonumber
 \end{eqnarray}
 and $\| z\|_{\mathcal{H}}$ denotes the corresponding norm.
Let $\mathcal{A}$ be a linear operator with the domain 
$$D(\mathcal{A}) =  \{z \in \mathcal{H} \colon 
 u \in \tilde H^{4}_{0}(0, L), v \in \tilde H^{2}_{0} (0,L), \zeta_{1}, 
\zeta_{2} \in \mathbb{R}^{n}, \xi = J v_x(L), \psi = M v(L) \},
$$
defined by 
\[ 
\mathcal{A} \left[ \begin{array}{c} u 
\\ v \\ \zeta_{1} \\ \zeta_{2} \\ \xi \\ \psi \end{array} \right] =
\left[ \begin{array}{c} v \\ - \frac{1}{\mu} (\Lambda u_{xx})_{xx} \\ A_{1} 
\zeta_{1} + b_{1}  \frac{\xi}{J} \\ A_{2} \zeta_{2} + b_{2}  \frac{\psi}{M}
 \\ - \Lambda(L) u_{xx}(L) - k_{1} u_x (L) - c_{1} \cdot \zeta_{1} - d_{1} 
\frac{\xi}{J} \\  (\Lambda u_{xx})_x(L) - k_{2}  u (L) - c_{2} \cdot 
\zeta_{2} - d_{2}  \frac{\psi}{M} \end{array} \right]. 
\]
Now we can write our problem as a first order evolution equation:  
\begin{equation} \label{evolution_notation}
\begin{array}{lcl}
z_t & = & \mathcal{A}z,\\
z(0) & = & z_{0} \in \mathcal{H}.
\end{array}
\end{equation}
For a review of abstract boundary feedback systems in a semigroup formalism we refer to \cite{KMN2003}.
The following well-posedness and stability result was obtained in \cite{Kugi:Thull}, for the homogeneous beam
(i.e. for $\mu$ and $\Lambda$ constant). The proof in the inhomogeneous case is performed analogously. 
Note that the contractivity of the semigroup also implies that $\|\cdot\|_{\mathcal{H}}$ 
is a Lyapunov functional for \eqref{evolution_notation}.
\begin{theorem} \label{well_posedness}
The operator $\mathcal{A}$ generates a $C_0$-semigroup of contractions 
on $\mathcal{H}$. For any $z_0 \in \mathcal{H}$, (\ref{evolution_notation}) has a unique mild solution $z\in 
C([0, \infty ) ; \mathcal{H})$  and $z(t)\stackrel{t\to\infty}{\longrightarrow}0$ in $\mathcal{H}$.
\end{theorem}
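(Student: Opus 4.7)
The plan is to establish well-posedness via the Lumer--Phillips theorem and then obtain asymptotic stability via LaSalle's invariance principle, following the standard scheme for dissipative evolution systems with compact resolvent.

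First I would verify dissipativity of $\mathcal{A}$ on $D(\mathcal{A})$. For $z\in D(\mathcal{A})$, I would compute $\mathrm{Re}\langle \mathcal{A}z, z\rangle_{\mathcal{H}}$ by integrating by parts twice in the term $\int_0^L(\Lambda u_{xx})_{xx} v\,dx$. The interior terms cancel against $\int_0^L \Lambda u_{xx} v_{xx}\,dx$ (via $v=u_t$), and the boundary contributions at $x=L$ combine with the $\xi,\psi$ components and with the $k_1 u_x(L)\breve u_x(L)$, $k_2 u(L)\breve u(L)$ terms in the inner product so that the purely conservative parts cancel. What remains involves the controller variables $\zeta_1,\zeta_2$ and the boundary traces $\xi/J = v_x(L)$, $\psi/M = v(L)$; applying the KYP identities \eqref{kyp} to the blocks $\zeta_j^\top P_j(A_j\zeta_j + b_j\cdot)$ yields
\begin{equation*}
\mathrm{Re}\langle \mathcal{A}z,z\rangle_{\mathcal{H}} = -\tfrac{1}{4}\sum_{j=1}^{2}\bigl|q_j^\top\zeta_j + \sqrt{2(d_j-\delta_j)}\,\eta_j\bigr|^2 - \tfrac{1}{4}\sum_{j=1}^{2}\epsilon_j\zeta_j^\top P_j\zeta_j - \tfrac{1}{2}\sum_{j=1}^{2}\delta_j\eta_j^2 \le 0,
\end{equation*}
with $\eta_1=\xi/J$, $\eta_2=\psi/M$. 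This is the key computation and the main bookkeeping effort of the proof.

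Next I would verify the range condition $\mathrm{Ran}(\lambda I - \mathcal{A}) = \mathcal{H}$ for some $\lambda>0$. Given $f=(f_1,\dots,f_6)^\top \in \mathcal{H}$, solving $(\lambda I-\mathcal{A})z=f$ reduces to a fourth-order elliptic boundary-value problem for $u$ on $(0,L)$ of the form $\lambda^2\mu u + (\Lambda u_{xx})_{xx} = g$ with $u(0)=u_x(0)=0$ and inhomogeneous Robin-type conditions at $x=L$ obtained by eliminating $v$, $\xi$, $\psi$, $\zeta_1$, $\zeta_2$ (the $\zeta_j$-equations are solved algebraically since $\lambda I - A_j$ is invertible for $\lambda\ge 0$ by the Hurwitz property). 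Coercivity of the associated bilinear form on $\tilde H^2_0(0,L)$ — which is immediate from strict positivity of $\Lambda$, $\mu$, $k_1$, $k_2$ — gives existence via Lax--Milgram, and elliptic regularity then places $u\in\tilde H^4_0(0,L)$. Together with dissipativity, Lumer--Phillips yields the contractive $C_0$-semigroup.

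For the strong stability statement, I would invoke LaSalle's invariance principle. Since $D(\mathcal{A})$ embeds compactly into $\mathcal{H}$ (by the Rellich theorem applied componentwise), the semigroup has precompact orbits from $D(\mathcal{A})$, and by density it suffices to conclude for these. The $\omega$-limit set $\omega(z_0)$ is a nonempty compact invariant set contained in $\{z : \mathrm{Re}\langle \mathcal{A}z,z\rangle = 0\}$. From the dissipation formula above, any point in this set has $\zeta_1=\zeta_2=0$, $\eta_1=\eta_2=0$, and $q_j^\top\zeta_j=0$ automatically. Propagating this along the semigroup through the ODE system \eqref{control_law} and the boundary conditions \eqref{model4}--\eqref{model5} forces $v(L)\equiv v_x(L)\equiv 0$ and $\Theta_1\equiv\Theta_2\equiv 0$; substitution into \eqref{model4}--\eqref{model5} yields $u(L)=u_x(L)=0$, $\Lambda u_{xx}(L)=(\Lambda u_{xx})_x(L)=0$. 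The beam equation \eqref{model1} with these four vanishing Cauchy data at $x=L$ together with \eqref{model2}--\eqref{model3} reduces, via separation of variables, to an overdetermined eigenvalue problem whose only solution is $u\equiv 0$. Hence $\omega(z_0)=\{0\}$, which proves $z(t)\to 0$ in $\mathcal{H}$.

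The main obstacle is the last step: identifying the invariant set as $\{0\}$ requires a careful unique-continuation argument for the inhomogeneous Euler--Bernoulli operator with over-specified boundary data at the free end, and it is precisely here that the smoothness assumptions on $\mu,\Lambda$ are used.
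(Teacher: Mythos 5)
Your proposal is correct and follows essentially the same route as the paper, which does not prove Theorem~\ref{well_posedness} itself but cites \cite{Kugi:Thull}, where the result is obtained exactly via semigroup theory (Lumer--Phillips with the dissipation identity that reappears as \eqref{norm-decay}), the norm $\|\cdot\|_{\mathcal{H}}$ as Lyapunov functional, and LaSalle's invariance principle using the compact resolvent of $\mathcal{A}$. Your dissipation formula is consistent with \eqref{norm-decay} (it equals half of it, as it should for $\mathrm{Re}\langle\mathcal{A}z,z\rangle$), and the final unique-continuation step is just Cauchy uniqueness for a fourth-order linear ODE after separating variables, so it is less delicate than you suggest.
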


But it remained an open question if this system is also exponentially stable. As a criterion we will use 
the following theorem due to Huang \cite{Huang}, which was also used for controlled EBBs 
\emph{without} tip mass \cite{Chen:Krantz:Ma:Wayne:West,Morgul}:
\begin{theorem}
 Let $T(t)$ be a uniformly bounded $C_0$-semigroup on a Hilbert
 space with infinitesimal generator $\mathcal{A}$. Then $T(t)$ is exponentially
 stable if and only if 
\begin{equation}
\label{real_part}
\sup{ \{ \emph{Re}(\lambda) \colon \lambda \in 
\sigma(\mathcal{A}) \} }  <  0
\end{equation}
and
\begin{equation}
\label{resolvent_norm}
\sup_{\lambda \in \mathbb{R}}{\| R( i \lambda, \mathcal{A})  \| }  < 
\infty 
\end{equation}
holds.
\end{theorem}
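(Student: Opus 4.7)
My plan is to prove the two implications separately, recognizing this as the classical Gearhart--Pr\"uss--Huang theorem. The forward direction is routine: if $\|T(t)\|\leq M e^{-\omega t}$ with $\omega>0$, then $R(\lambda,\mathcal{A})x = \int_0^\infty e^{-\lambda t}T(t)x\,dt$ converges absolutely for $\operatorname{Re}\lambda > -\omega$, so $\sigma(\mathcal{A})\subset\{\operatorname{Re}\lambda\leq -\omega\}$ and $\|R(\lambda,\mathcal{A})\|\leq M/(\operatorname{Re}\lambda + \omega)$, yielding both \eqref{real_part} and \eqref{resolvent_norm}.

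For the converse I would work in three stages. The first is to upgrade \eqref{real_part} and \eqref{resolvent_norm} to a resolvent bound on a left strip. By \eqref{real_part} the resolvent is analytic on $\{\operatorname{Re}\lambda>-\alpha\}$ for some $\alpha>0$, and by \eqref{resolvent_norm} with uniform bound $M_0$ the Neumann series $R(i\beta+z,\mathcal{A}) = R(i\beta,\mathcal{A})\sum_{n\geq 0}(-zR(i\beta,\mathcal{A}))^n$ converges for $|z|<1/M_0$, giving $\|R(\lambda,\mathcal{A})\|\leq 2M_0$ on $\{|\operatorname{Re}\lambda|\leq 1/(2M_0)\}$. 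Choosing $\delta\in\bigl(0,\min(\alpha,(2M_0)^{-1})\bigr)$ then produces a uniform bound on the strip $\{-\delta\leq\operatorname{Re}\lambda\leq\omega_2\}$ for any fixed $\omega_2>0$.

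The second stage exploits the Hilbert-space hypothesis via Plancherel's theorem. For any $x\in\mathcal{H}$ and any $\omega_2$ strictly above the semigroup growth bound, the map $t\mapsto \chi_{t\geq 0}\,e^{-\omega_2 t}T(t)x$ lies in $L^2(\mathbb{R};\mathcal{H})$ with Fourier transform $R(\omega_2+i\beta,\mathcal{A})x$, so Plancherel together with $\|T\|\leq M$ gives
\[
\int_\mathbb{R}\|R(\omega_2+i\beta,\mathcal{A})x\|^2\,d\beta = 2\pi\int_0^\infty e^{-2\omega_2 t}\|T(t)x\|^2\,dt \leq \frac{\pi M^2}{\omega_2}\|x\|^2.
\]
The resolvent identity $R(\omega_1+i\beta,\mathcal{A})-R(\omega_2+i\beta,\mathcal{A}) = (\omega_2-\omega_1)R(\omega_1+i\beta,\mathcal{A})R(\omega_2+i\beta,\mathcal{A})$, combined with the pointwise strip bound from stage one, transfers this $\beta$-integral estimate uniformly to any $\omega_1\in(-\delta,\omega_2)$; applying Plancherel in reverse at $\omega_1 = -\delta/2$ yields
\[
\int_0^\infty e^{\delta t}\|T(t)x\|^2\,dt \leq K\|x\|^2
\]
for a constant $K$ independent of $x$. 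The third stage converts this $L^2$ estimate into pointwise exponential decay in the style of Datko: for $t_0>0$ and $t\in[t_0/2,t_0]$ the semigroup identity $T(t_0)x=T(t_0-t)T(t)x$ together with $\|T\|\leq M$ yields $\|T(t_0)x\|^2\leq M^2\|T(t)x\|^2$, and integrating $t$ over $[t_0/2,t_0]$ gives
\[
\tfrac{t_0}{2}\|T(t_0)x\|^2 \leq M^2\int_{t_0/2}^{t_0}\|T(t)x\|^2\,dt \leq M^2 e^{-\delta t_0/2}K\|x\|^2,
\]
which for $t_0\geq 1$ is exponential decay with rate $\delta/4$, while $t_0<1$ is covered by the uniform bound on $T$.

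The main obstacle I expect is the uniform $\beta$-integral bound in stage two: it is essential to verify that inserting the resolvent identity under the integral and estimating via the pointwise strip bound from stage one produces a constant that is independent of $\omega_1$ as $\omega_1\to -\delta^+$, so that one genuinely reaches a negative value of $\omega_1$ in the final $L^2$ estimate. Once that uniformity is secured, stages one and three are essentially mechanical, and the forward direction is immediate.
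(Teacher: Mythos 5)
You should first be aware that the paper does not prove this statement at all: it is quoted as a known criterion due to Huang \cite{Huang} (the Gearhart--Pr\"uss--Huang theorem) and is then only \emph{applied}, the authors' contribution being to show that condition \eqref{real_part} fails for their operator. So there is no proof in the paper to compare yours against, and your proposal has to be judged as a standalone argument.

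As such, it follows one of the standard routes (Plancherel on vertical lines plus a Datko-type conversion) and is correct in outline: the forward direction, the Neumann-series extension of the resolvent bound to a strip around the imaginary axis, and the final integration trick turning the weighted $L^2$ estimate into pointwise exponential decay are all sound. The one step that is not quite as you describe it is the ``Plancherel in reverse'' at $\omega_1=-\delta/2$. Since $-\delta/2$ may lie below the growth bound of $T(t)$ (which for a merely uniformly bounded semigroup can equal $0$), you do not know a priori that $t\mapsto e^{\delta t/2}T(t)x$ is square integrable, nor that $\beta\mapsto R(-\delta/2+i\beta,\mathcal{A})x$ is its Fourier transform -- the Laplace-transform representation of the resolvent is only available for $\operatorname{Re}\lambda$ above the growth bound, so plain Plancherel does not apply on that line. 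What is needed is the vector-valued Paley--Wiener theorem: holomorphy of $\lambda\mapsto R(\lambda,\mathcal{A})x$ on the strip together with your uniform bound $\sup_{\omega_1\in[-\delta,\omega_2]}\int_{\mathbb{R}}\|R(\omega_1+i\beta,\mathcal{A})x\|^2\,d\beta\le K'\|x\|^2$ shows that the boundary function on $\operatorname{Re}\lambda=-\delta/2$ is the Laplace transform of some $g$ supported on $[0,\infty)$ with $e^{\delta t/2}g$ square integrable, and uniqueness of Laplace transforms (comparing with the representation valid for large $\operatorname{Re}\lambda$) identifies $g(t)=T(t)x$; only then does your $L^2$ estimate follow with the stated constant. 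By contrast, the uniformity in $\omega_1$ that you single out as the main obstacle is the easy part: the resolvent identity together with the strip bound from your stage one contributes only a factor $\bigl(1+(\omega_2+\delta)\sup_{\text{strip}}\|R\|\bigr)^2$, manifestly independent of $\omega_1$. With the Paley--Wiener step supplied, the argument is complete.
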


The following theorem is the main result of this section. Our proof of non-exponential stability of system  \eqref{evolution_notation} relies on the asymptotic behavior of its eigenvalues. A related spectral analysis of the inhomogeneous EBB, but with a boundary control torque is given in \cite{Guo2002}. Below we extend this study to the case when a dynamic control law is applied.


\begin{theorem} \label{eigenvalue_asymtotic}
The operator $\mathcal{A}$ has eigenvalue pairs $\lambda_n$ and 
$\overline{\lambda}_n, \,n \in \mathbb{N}$, with the 
 following asymptotic behavior:

\begin{eqnarray*} 
\lambda_n &  = &  i \left[ \left( \frac{(2n - 1) \pi}{2 h} \right)^2 + 
\frac{4 h M^{-1}  \mu(L)^{\frac{3}{4}} \Lambda(L)^{\frac{1}{4}} - I}{2 h^2} \right] + \mathcal{O}(n^{-1}),
\end{eqnarray*} 
where 
\begin{equation} \label{h}
 h := \int_{0}^{L}{\left(\frac{\mu(w)}{\Lambda(w)}\right)^{\frac14} \;dw},
\end{equation} 
and $I$ is a real constant depending only on $\Lambda$, $\mu$, and given by \eqref{I}.
Therefore,  $$\sup{ \{ \emph{Re}(\lambda) \colon \lambda \in \sigma(\mathcal{A}) \} } = 0,$$ 
and hence the evolution problem (\ref{evolution_notation}) is \emph{not} 
exponentially stable.
\end{theorem}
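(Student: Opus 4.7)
\medskip
\noindent\textbf{Proof proposal.} The plan is to reduce the spectral problem $\mathcal{A}z = \lambda z$ to a fourth-order ODE eigenvalue problem for $u$ with $\lambda$-dependent boundary conditions, and then apply a Liouville--Green (WKB) analysis to locate the large eigenvalues. Writing the eigenvalue equation componentwise, the first row forces $v = \lambda u$, so the second row becomes
\begin{equation*}
(\Lambda u_{xx})_{xx} + \lambda^{2}\mu u = 0, \qquad 0<x<L,
\end{equation*}
together with $u(0)=u_x(0)=0$. The definitions $\xi=Jv_x(L)$, $\psi=Mv(L)$ give $\xi=\lambda J u_x(L)$ and $\psi=\lambda M u(L)$; the ODEs for $\zeta_j$ yield $\zeta_1 = \lambda(\lambda I-A_1)^{-1}b_1\,u_x(L)$ and $\zeta_2 = \lambda(\lambda I-A_2)^{-1}b_2\,u(L)$ (for $\lambda\notin\sigma(A_j)$). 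Substituting into the last two rows and using the transfer functions $\mathcal{G}_j(\lambda)$, one obtains the nonlinear-in-$\lambda$ boundary conditions
\begin{align*}
\lambda^{2}J\,u_x(L)+\Lambda(L)u_{xx}(L)+\bigl(k_1+\lambda\mathcal{G}_1(\lambda)\bigr)u_x(L) &= 0,\\
\lambda^{2}M\,u(L)-(\Lambda u_{xx})_x(L)+\bigl(k_2+\lambda\mathcal{G}_2(\lambda)\bigr)u(L) &= 0.
\end{align*}
Since $A_j$ is Hurwitz, $\mathcal{G}_j(\lambda)=d_j+O(|\lambda|^{-1})$ as $|\lambda|\to\infty$, so for the asymptotic eigenvalue analysis these BCs behave like static feedback BCs perturbed by $O(|\lambda|^{-1})$ terms.

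\smallskip

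Next I apply the Liouville--Green substitution adapted to the inhomogeneous beam. Setting
\begin{equation*}
\eta(x)=\int_{0}^{x}\!\Bigl(\tfrac{\mu(w)}{\Lambda(w)}\Bigr)^{\!1/4}dw \quad\bigl(\text{so }\eta(L)=h\bigr), \qquad u(x)=(\mu\Lambda)^{-1/8}(x)\,w(\eta),
\end{equation*}
the fourth-order equation transforms into $w_{\eta\eta\eta\eta}+\lambda^{2}w+Q[\mu,\Lambda](\eta)w+(\text{lower-order terms})=0$, where the potential $Q$ is a smooth function built from $\mu,\Lambda$ and their derivatives. Writing $\lambda=i\tau^{2}$, the leading operator has the four independent solutions $e^{\pm i\tau\eta},e^{\pm\tau\eta}$; by an asymptotic series in $\tau^{-1}$ one constructs four exact solutions $u_1,\dots,u_4$ of the perturbed equation with the same leading exponential behavior but carrying $O(\tau^{-1})$ corrections whose structure is determined by $Q$. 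The clamped conditions at $x=0$ give two linear relations, leaving two free coefficients. Substituting into the two boundary conditions at $x=L$ gives a $2\times 2$ determinant $\mathcal{D}(\lambda)=0$, the characteristic equation for the eigenvalues.

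\smallskip

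The key asymptotic step is to expand $\mathcal{D}(\lambda)$ for large $|\tau|$ in the upper half-plane. The dominant contributions come from the $M\lambda^{2}u(L)$ and $J\lambda^{2}u_x(L)$ inertial terms at the free end; the $k_j$ terms and the $\lambda\mathcal{G}_j$ terms (which tend to $\lambda d_j$) enter only in the first subleading correction, as do the contributions of the Liouville potential $Q$, which become an integral $I$ of the form
\begin{equation*}
I=\int_{0}^{L}\!\!\Phi\bigl(\mu,\Lambda,\mu',\Lambda',\dots\bigr)\,dx,
\end{equation*}
capturing all $O(\tau^{-1})$ boundary-plus-interior corrections of the WKB ansatz. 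Dropping exponentially small $e^{-\tau h}$ terms, $\mathcal{D}=0$ reduces to a transcendental equation of the form $\cos(\tau h)+\alpha(\tau)\sin(\tau h)+O(e^{-\tau h})=0$ with $\alpha(\tau)=O(1)$ explicitly computable. Inserting the ansatz $\tau_n h=(2n-1)\pi/2+\beta_n/n+O(n^{-2})$ and matching yields the stated expansion
\begin{equation*}
\lambda_{n}=i\Bigl[\bigl(\tfrac{(2n-1)\pi}{2h}\bigr)^{2}+\tfrac{4h\,M^{-1}\mu(L)^{3/4}\Lambda(L)^{1/4}-I}{2h^{2}}\Bigr]+O(n^{-1}),
\end{equation*}
with $\bar\lambda_n$ obtained by conjugation. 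Since $\mathcal{A}$ generates a semigroup of contractions (Theorem~\ref{well_posedness}) one has $\mathrm{Re}(\lambda)\le 0$ for all $\lambda\in\sigma(\mathcal{A})$, while the above shows a sequence with $\mathrm{Re}(\lambda_n)=O(n^{-1})\to 0$; hence $\sup\{\mathrm{Re}(\lambda):\lambda\in\sigma(\mathcal{A})\}=0$, so Huang's criterion \eqref{real_part} fails and exponential stability is excluded.

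\smallskip

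The main obstacle is the bookkeeping in the Liouville--Green step: one must carry the WKB expansion of all four fundamental solutions to one order beyond leading, correctly propagate these corrections through the $4\times 4$ boundary-condition determinant, and verify that both the $\mu,\Lambda$-dependent interior corrections and the $\mathcal{G}_j(\lambda)-d_j=O(|\lambda|^{-1})$ controller corrections are absorbed into the $O(n^{-1})$ remainder, while the specific combination $4hM^{-1}\mu(L)^{3/4}\Lambda(L)^{1/4}-I$ is extracted cleanly as the $O(1)$ correction to $(\tau_n h)^{2}$. The inhomogeneity of $\mu,\Lambda$ and the presence of the \emph{dynamic} (rather than static) boundary controller both complicate the computation compared with the analysis in \cite{Rao,Guo2002}, but the SPR structure ensures that these complications only contribute at the $O(n^{-1})$ level.
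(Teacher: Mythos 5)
Your proposal follows essentially the same route as the paper's proof: reduction of $\mathcal{A}z=\lambda z$ to a fourth-order ODE eigenvalue problem with $\lambda$-dependent boundary conditions, a Liouville--Green change of variables $y\propto\int_0^x(\mu/\Lambda)^{1/4}\,dw$ together with $\lambda=i\tau^2/h^2$, a fundamental system of four exponential-type solutions (the paper invokes Naimark's \emph{Satz 1}), a characteristic determinant collapsing to a perturbed $\cos\tau=0$ secular equation, root localization (the paper uses Rouch\'e's theorem where you use an ansatz-and-match argument), and finally Huang's criterion. The one detail to watch is the amplitude factor in the Liouville normalization: to eliminate the third-derivative term one needs $\mu^{-3/8}\Lambda^{-1/8}$ rather than $(\mu\Lambda)^{-1/8}$, and since a residual third-derivative term would enter at exactly the order that produces the constant $I$, this is part of the bookkeeping you correctly identify as the main obstacle.
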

\begin{proof}
We already know that the operator $\mathcal{A}$ has a compact resolvent
(see \cite{Kugi:Thull}). Thus, its spectrum $\sigma(\mathcal{A})$ consists
 entirely of isolated eigenvalues, at most countably many, 
and each eigenvalue has a finite algebraic multiplicity. 
Since $\mathcal{A}$ also  
generates an asymptotically stable $C_0$-semigroup of contractions we 
obtain 
\[ \quad \textnormal{Re}\lambda < 0, \quad \forall \lambda \in 
\sigma(\mathcal{A}).\] 
The matrices $A_1$ and 
$A_2$ are Hurwitz matrices and therefore only have eigenvalues with negative
 real parts. The set  $\sigma(\mathcal{A}) \cap (\sigma(A_1) \cup 
\sigma(A_2)) \subset \mathbb{C}$ is therefore empty or finite. Now we consider only such eigenvalues
$\lambda$ of the 
operator $\mathcal{A}$ that are not eigenvalues of $A_1$ or $A_2$. Then $z = (u, v, 
\zeta_{1}, \zeta_{2}, \xi, \psi)^{\top}\in D(\mathcal{A})$ is a corresponding eigenvector if and only if:
\begin{eqnarray*}
v& = & \lambda u, \\ 
\zeta_1 & = & -\lambda u_{x} (L) \left(A_1 - \lambda I \right)^{-1} b_{1}, \\
\zeta_2 & = & -\lambda u (L) \left(A_2 - \lambda I \right)^{-1} b_{2} ,
\end{eqnarray*} and 
\begin{eqnarray}
\left(\Lambda u_{xx}\right)_{xx} + \mu \lambda^2 u & = & 0, \label{e1} \\
u(0) & = & 0, \label{e2} \\ u_x (0) & = & 0, \label{e3} \\ 
 \Lambda(L) u_{xx} (L) + (k_1  - \lambda [ \left(A_1 - \lambda I
 \right)^{-1} b_{1} ] \cdot c_1  + \lambda d_1 + \lambda^2 J ) u_x(L)  & =& 0, \label{e4} 
\\-\left(\Lambda u_{xx}\right)_x(L) + (k_2  - \lambda  [ \left(A_2 - \lambda I
 \right)^{-1} b_{2} ]\cdot c_2 + \lambda d_2  + \lambda^2 M) u(L) & =& 0. \label{e5}   
\end{eqnarray} 
In order to solve \eqref{e1}--\eqref{e5}, we perform spatial transformations as in \cite{Guo2002_II}, which 
convert \eqref{e1} into a more convenient form.
{}First, \eqref{e1} is rewritten as:
\begin{equation} \label{ev_standard}
 u_{xxxx} + \frac{2 \Lambda_x}{\Lambda} u_{xxx} + \frac{ \Lambda_{xx}}{\Lambda} u_{xx} 
+ \frac{\mu}{\Lambda} \lambda^2  u = 0.
\end{equation}
Then a space transformation is introduced, so that the coefficient appearing with $u$ in \eqref{ev_standard} 
becomes constant. Let $u(x) = \breve u(y)$, where 
\begin{equation}\label{y}
 y =y(x) := \frac{1}{h} \int_{0}^{x}{\left(\frac{\mu(w)}{\Lambda(w)}\right)^{\frac14} \;dw},
\end{equation}
with $h$ defined as in \eqref{h}.
Then, from \eqref{e2}--\eqref{ev_standard} it follows that $\breve{u}$ satisfies:
\begin{equation} \label{e6} 
\begin{array}{rcc}
\breve{u}_{yyyy} + \alpha_3 \breve{u}_{yyy} + \alpha_2 \breve{u}_{yy} + \alpha_1 \breve{u}_{y} 
+ h^4 \lambda^2 \breve{u}\ & = & 0,\\
\breve{u}(0) & = & 0,  \\ \breve{u}_y (0) & = & 0, \\ 
 \breve{u}_{yy}(1) + \breve{u}_{y}(1)\left( \beta_0 + \kappa_1(\lambda) \right) & =& 0,
\\- \breve{u}_{yyy}(1) + \beta_1 \breve{u}_{yy}(1)  + \beta_2 \breve{u}_{y}(1) 
+ \kappa_2(\lambda) \breve{u}(1) & =& 0,   
\end{array}
\end{equation} 
with 
\begin{equation}\label{alpha_3}
\begin{array}{lcl}
 \alpha_3(y) &=& h \left(\frac{\mu(x)}{\Lambda(x)}\right)^{-\frac14} \left( \frac32 \frac{\mu_x(x)}{\mu(x)} 
 + \frac12 \frac{\Lambda_x(x)}{\Lambda(x)} \right),
\end{array}
\end{equation}
\begin{equation}\label{alpha_2}
\begin{array}{lcl}
 \alpha_2(y) & = & \frac{1}{h^2} \left\{ - \frac{9}{16} \left(\frac{\mu(x)}{\Lambda(x)}\right)^{-\frac32} 
\left[ \left( \frac{\mu(x)}{\Lambda(x)} \right)_x \right]^2 
+ \left(\frac{\mu(x)}{\Lambda(x)}\right)^{-\frac12} \left(\frac{\mu(x)}{\Lambda(x)}\right)_{xx} \right. \\
& + & \left. \frac{3}{2} \frac{\Lambda_x(x)}{\Lambda(x)} \left(\frac{\mu(x)}{\Lambda(x)}\right)^{-\frac12} 
\left(\frac{\mu(x)}{\Lambda(x)}\right)_{x} + \frac{\Lambda_{xx}(x)}{\Lambda(x)} \left(\frac{\mu(x)}{\Lambda(x)}\right)^{\frac12} \right\} ,
\end{array}
\end{equation}
and $\alpha_1$ is a smooth function of
$h$, $\frac{d^k \Lambda}{dx^{k}}$, and $\frac{d^k \mu}{dx^{k}}$ for $k=0,1,2,3$. 
The coefficients $\beta_0, \beta_1, \beta_2$ are 
constants, depending on $h$, $\frac{d^k \Lambda}{dx^{k}}(L)$, and $\frac{d^k \mu}{dx^{k}}(L)$ for $k=0,1,2$.
{}Furthermore, we have introduced the following notation: 
\begin{equation*}
\begin{array}{rcl}
\kappa_1 (\lambda) & := & \frac{h}{\Lambda(L)} \left( \frac{\mu(L)}{\Lambda(L)}\right)^{-\frac14}
\left( k_1 - \lambda \left( \left( A_1 - \lambda I \right)^{-1} b_1 \right) \cdot c_1 + \lambda d_1 
 + \lambda^2 J \right),\\
\kappa_2 (\lambda) & := & \frac{h^3}{\Lambda(L)} \left( \frac{\mu(L)}{\Lambda(L)}\right)^{-\frac34}
\left( k_2 - \lambda \left( \left( A_2 - \lambda I \right)^{-1} b_2 \right) \cdot c_2 + \lambda d_2
 + \lambda^2 M \right).
\end{array}
\end{equation*}
In order to solve \eqref{e6}, we use the strategy as in Chapter 2, Section 4 of \cite{Naimark}. 
Hence, to eliminate the third derivative term $\alpha_3 \breve{u}_{yyy}$, a new invertible space 
transformation is introduced: 
\begin{equation*}
\breve{u}(y) = e^{- \frac14 \int_0^y{\alpha_3(z) \,dz}} \tilde{u}(y).
\end{equation*}
Then \eqref{e6} becomes:
\begin{eqnarray}
\tilde{u}_{yyyy}  + \tilde{\alpha}_2 \tilde{u}_{yy} + \tilde{\alpha}_1 \tilde{u}_{y} 
+ \tilde{\alpha}_0 \tilde{u}+ h^4 \lambda^2 \tilde{u} & = & 0, \label{e11} \\
\tilde{u}(0) & = & 0, \label{e12} \\ \tilde{u}_y (0) & = & 0, \label{e13} \\ 
 \tilde{u}_{yy}(1) + \tilde{u}_{y}(1)\left( \beta_3 + \kappa_1(\lambda) \right) +
\tilde{u}(1)\left( \beta_4 - \frac14 \alpha_3(1) \kappa_1(\lambda) \right) & =& 0, \label{e14} 
\\- \tilde{u}_{yyy}(1) + \beta_5 \tilde{u}_{yy}(1)  + \beta_6 \tilde{u}_{y}(1) 
+ \left(\beta_7 + \kappa_2(\lambda)\right) \tilde{u}(1) & =& 0, \label{e15}   
\end{eqnarray} 
where
\begin{equation}\label{tilde_alpha_2}
 \tilde{\alpha}_2(y) = \alpha_2(y) - \frac38 \alpha_3(y)^2 - \frac32 (\alpha_3)_y(y),
\end{equation}
and $\tilde{\alpha}_1$, $\tilde{\alpha}_0$ are smooth functions of 
$h$, $\frac{d^k \Lambda}{dx^{k}}$, and $\frac{d^k \mu}{dx^{k}}$ for $k=0,\dots, 4$.
The constant coefficients $\beta_3, \dots, \beta_7$ depend on $h$, 
$\frac{d^k \Lambda}{dx^{k}}(L)$, and $\frac{d^k \mu}{dx^{k}}(L)$ for $k=0, \dots, 3$.
Due to the invertibility of the above transformations, the obtained problem \eqref{e11}--\eqref{e15} 
is equivalent to the original problem \eqref{e1}--\eqref{e5}. 

Since the eigenvalues of $\mathcal{A}$ come in complex conjugated pairs, and have negative real parts,
it suffices to consider only those $\lambda$ in the upper-left quarter-plane, i.e. such that 
$\arg{\lambda} \in (\frac{\pi}{2}, \pi]$. We define the unique $\tau \in \mathbb{C}$ such that 
$\text{Re}(\tau) \ge 0$, and $\lambda = i \frac{\tau^2}{h^2}.$ It can be seen that
$\arg{\tau} \in  (0, \frac{\pi}{4}]$. Now, the solution to \eqref{e11} can be approximated 
by the solution to the differential equation with the dominant terms only, i.e. 
$\tilde{u}_{xxxx} + \lambda^2 h^4 \tilde{u} = 0$. More precisely, we have 
(by adaptation of \emph{Satz 1}, pp. 42 of \cite{Naimark}; and the last result of Lemma \ref{ODE_solutions} 
is stated in the proof of \emph{Satz 1}):
\begin{lemma} \label{ODE_solutions}
For $\tau \in (0, \frac{\pi}{4}]$, and $|\tau|$ large enough, there exist linearly independent solutions 
$\{\gamma_j\}_{j=1}^{4}$, to \eqref{e11}, such that:
\begin{equation} \label{fundamental_asy}
\begin{array}{lcl}
\gamma_j(y) & = &  e^{\omega_j \tau y} \left( 1 + f_j(y) \right), \\
\frac{d^k}{dy^k} \gamma_{j}(y) &= &(\omega_j \tau)^k e^{\omega_j \tau y} 
\left( 1 + f_j(y) +\mathcal{O}(|\tau|^{-2}) \right), 
\quad k\in \{1, 2, 3\}, 
\end{array}
\end{equation}
where  $\omega_1 = 1, \, \omega_2 = i, \, \omega_3 = -1, \, \omega_4= -i$, and 
$$f_j(y) = - \frac{\int_0^y{\tilde{\alpha}_2(w) \, dw}}{4 \, \omega_j \tau} + \mathcal{O}(|\tau|^{-2}), 
\text{ as } |\tau| \rightarrow \infty, \,j = 1, \dots, 4.$$
Furthermore, the functions 
$\frac{d^k}{dy^k} \gamma_{j}$ depend analytically on $\tau$,
for $j= 1, \dots, 4, \, k = 0, \dots, 3$, and $|\tau|$ large enough. 
\end{lemma}
Now, due to Lemma \ref{ODE_solutions}, the solution to \eqref{e11}--\eqref{e15}
can be written as:
\[ \tilde{u}(y) = C_1 \gamma_1(y) + C_2 \gamma_2(y) +
  C_3 \gamma_3(y) + C_4 \gamma_4(y), \]
where the constants $\{C_j\}_{j=1}^4$ are determined by the boundary conditions 
\eqref{e12} -- \eqref{e15}, and therefore satisfy the following linear system:
\begin{equation} \label{eig_system}
\begin{array}{ccl}
0 & = & C_1 \gamma_1(0) + C_2 \gamma_2(0) + C_3 \gamma_3(0) + C_4 \gamma_4(0),\\ 
0 & = & C_1 (\gamma_1)_y(0) + C_2 (\gamma_2)_y(0) + C_3 (\gamma_3)_y(0) + C_4 (\gamma_4)_y(0),\\
0 & = & \sum_{i=1}^{4}{C_i m_{3\,i}},\\
0 & = & \sum_{i=1}^{4}{C_i m_{4\,i}},
\end{array}
\end{equation}
where we define:
\[m_{3\,i} :=(\gamma_i)_{yy}(1) + 
(\beta_3 + \kappa_1(\lambda))(\gamma_i)_{y}(1) + (\beta_4 - \frac{1}{4} \alpha_3(1) \kappa_1(\lambda))
 \gamma_i(1),\]
\[m_{4\,i} :=  -(\gamma_i)_{yyy}(1) + \beta_5 (\gamma_i)_{yy}(1) + 
\beta_6 (\gamma_i)_{y}(1) + (\beta_7 + \kappa_2(\lambda))
 \gamma_i(1).\]
{}From \eqref{fundamental_asy} easily follows: 
\begin{equation} \label{asy_behaviour}
\begin{array}{l}
\gamma_j(0) = 1 + f_j(0), 
\quad (\gamma_j)_y(0) = \omega_j \tau (1 + f_j(0) + \mathcal{O}(|\tau|^{-2})), \quad j=1, \dots, 4, \\
m_{31} = e^{\tau} \left( (l_1 \tau^5 + l_2 \tau^4)(1+f_1(1)) + \mathcal{O}(|\tau|^{3})\right), \\
m_{41} = e^{\tau} \left( (l_3 \tau^4 - \tau^3)(1+f_1(1)) + \mathcal{O}(|\tau|^{3})\right), \\
m_{32} = e^{i \tau}  \left( (i l_1 \tau^5 + l_2 \tau^4)(1+f_2(1)) + \mathcal{O}(|\tau|^{3})\right), \\
m_{42} = e^{i \tau}  \left( (l_3 \tau^4 + i \tau^3)(1+f_2(1)) + \mathcal{O}(|\tau|^{2})\right), \\
m_{33} = e^{- \tau}  \left( (-l_1 \tau^5 + l_2 \tau^4)(1+f_3(1)) + \mathcal{O}(|\tau|^{3})\right), \\
m_{43} = e^{- \tau}  \left( (l_3 \tau^4 +  \tau^3)(1+f_3(1)) + \mathcal{O}(|\tau|^{2})\right)), \\
m_{34} = e^{-i \tau}  \left( (-i l_1 \tau^5 + l_2 \tau^4)(1+f_4(1)) + \mathcal{O}(|\tau|^{3})\right), \\
m_{44} = e^{-i \tau}  \left( (l_3 \tau^4 - i \tau^3)(1+f_4(1)) + \mathcal{O}(|\tau|^{2})\right), 
\end{array}
\end{equation}
with \[l_1:= -\frac{J}{h^3 \Lambda(L)} \left(\frac{\mu(L)}{\Lambda(L)}\right)^{-\frac14}, 
l_2:= \frac{J \alpha_3(1)}{4 h^3 \Lambda(L)} \left(\frac{\mu(L)}{\Lambda(L)}\right)^{-\frac14}, 
l_3:= -\frac{M}{h \Lambda(L)} \left(\frac{\mu(L)}{\Lambda(L)}\right)^{-\frac34}.\] 
{}For $\tilde{u}$ to be nontrivial, the determinant of the system \eqref{eig_system} has to vanish:
\begin{equation} \label{det}
\begin{vmatrix}
\gamma_1(0) & \gamma_2(0) & \gamma_3(0) & \gamma_4(0)\\ 
(\gamma_1)_y(0) & (\gamma_2)_y(0) & (\gamma_3)_y(0) & (\gamma_4)_y(0)\\
m_{31} & m_{32} & m_{33} & m_{34} \\
m_{41} & m_{42} & m_{43} & m_{44} 
\end{vmatrix} = 0.
\end{equation}
Next we shall write (\ref{det}) in an asymptotic
form when $\textnormal{Re} (\tau)$ is large: 
\begin{equation} \label{det_asy}
 B_1 (m_{31} m_{44} - m_{41} m_{34}) 
+ B_2 (m_{31} m_{42} - m_{41} m_{32}) + \mathcal{O}(|\tau|^{10}) = 0,
\end{equation}
where 
\begin{equation} \label{Bs}
\begin{array}{ccl}
B_1 & := & - (1+i)\left[1 + f_2(1) + f_3(1)\right] + \mathcal{O}(|\tau|^{-2}), \\ 
B_2 & := &  (1-i)\left[1+ f_3(1) + f_4(1)\right] + \mathcal{O}(|\tau|^{-2}). 
\end{array}
\end{equation}
Noting only the terms with leading powers of $\tau$ in 
(\ref{det_asy}), and after division by $e^{\tau} \tau^{10}$, we obtain
 \begin{eqnarray}\label{equa}
\cos{\tau} - \tau^{-1} (  \frac{I}{4} + \frac{1}{l_3}) (\cos{\tau} + \sin{\tau}) 
+ \mathcal{O}(|\tau|^{-2})& = & 0,
\end{eqnarray} 
where 
\begin{equation} \label{I}
 I := \int_{0}^{1}{\tilde{\alpha}_2(w) \, dw}.
\end{equation}
We set $k = n - \frac{1}{2}$ for $n \in \mathbb{N}$ sufficiently large and 
consider equation (\ref{equa}) for $\tau$ in a neighborhood of $k \pi$. 
We shall apply Rouch\'{e}'s Theorem (see \cite{Krantz}, e.g.) to the equation (\ref{equa}),
written as 
\begin{equation} \label{rouche}
 \cos{\tau} + f(\tau)  = 0,
\end{equation} 
where $f(\tau) = \mathcal{O} (|\tau|^{-1})$.  
Consider $\cos{\tau}$ on a simple closed contour $K \subset \{ (n-1)
 \pi \le  \textnormal{Re} (\tau) \le n \pi\}$ ``around'' 
$\tau = k \pi$ such that $| \cos{\tau} | \ge 1$ on $K$. 
For $n$ large enough, the holomorphic function $f$ satisfies $|f(z)| < 1 \le 
|\cos{\tau}|$ on $K$. Since $\tau = k \pi$ is the only zero of 
$\cos{\tau}$ inside $K$, Rouch\'{e}'s Theorem implies that (\ref{rouche})
has also exactly one solution inside $K$:
\begin{equation} \label{tau_n} 
\tau_n = k \pi + h_n. 
\end{equation}  
Then, $\cos{\tau_n} = (-1)^{n} 
\sin{h_n}$. Furthermore, (\ref{rouche}) implies $h_n = \mathcal{O}(n^{-1})$. 
To make the asymptotic behavior of $h_n$ more precise, we consider
\begin{eqnarray*}
\sin{\tau_n} & = &  -(-1)^n \cos{h_n} =  -(-1)^n + \mathcal{O}(n^{-2}) ,\\
\cos{\tau_n} & = & (-1)^n \,h_n  + \mathcal{O}(n^{-3}).
\end{eqnarray*}
Using this in (\ref{equa}) we get \[ h_n + \tau^{-1} (\frac{1}{l_3} + \frac{I}{4}) + \mathcal{O}(n^{-2}) = 0.\]
{}Finally, this yields \[ h_n = \frac{4 h M^{-1} \mu(L)^{\frac{3}{4}} \Lambda(L)^{\frac{1}{4}} - I}{4 k \pi }
 +  \mathcal{O}(n^{-2}),\]
 and \eqref{tau_n} implies
\begin{equation} \label{asy_eigenvalue}
\lambda_n  =  i \left(\frac{\tau_n}{h}\right)^2   =
 i \left[ \left( \frac{k \pi}{h} \right)^2 + 
\frac{4 h M^{-1}\mu(L)^{\frac{3}{4}} \Lambda(L)^{\frac{1}{4}} - I}{2 h^2} \right] + \mathcal{O}(n^{-1}).
\end{equation}
Hence, condition (\ref{real_part}) fails and $T(t)$ is \emph{not} exponentially stable.
\end{proof}
In Figure \ref{Eigenvalue_plot} we show the eigenvalue pairs corresponding to the simulation example from \S\ref{S-simul}.
 They were obtained by application of Newton's method to the equation (\ref{det}).
\begin{figure}[h]
\begin{center}
 \includegraphics[trim = 0mm -100mm 0mm 0mm,scale=0.5]{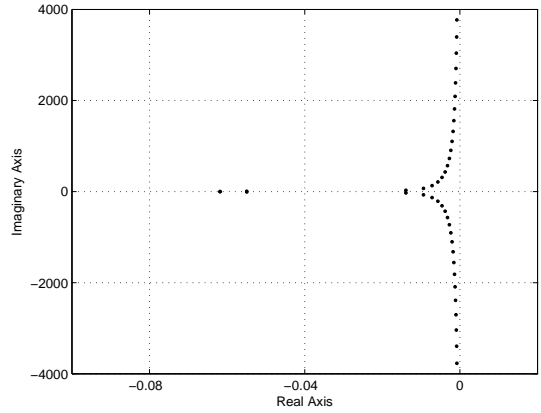}
\end{center}
\vspace{-5cm}
\caption{The eigenvalues $\lambda_n$ of the system approach the imaginary axis as $n \rightarrow \infty$.}
\label{Eigenvalue_plot}
\end{figure}
\begin{remark}
It can also be shown that the condition (\ref{resolvent_norm})
 does not hold. In particular, it can be shown that there is a constant
 $C$,  a sequence $\{ \mu_n \} \subset \mathbb{R}$ diverging to $ + \infty$,
and a sequence $\{ z_n \} \subset D( \mathcal{A})$ such that \[ \frac{ \| R(i
 \mu_n, \mathcal{A}) z_n \|_{\mathcal{H}} }{\| z_n \|_{\mathcal{H}}} > C \mu_n,
 \quad \text{ for all } n \text{ large enough.}\] 
But since
the details of this calculation are rather technical we only present them 
in \cite{Miletic}. 
\end{remark}
\begin{remark}
We shall now comment on the asymptotic behavior of the eigenfunctions of $\mathcal{A}$. 
The solution to \eqref{e11}--\eqref{e15} for $\tau = \tau_n$ has the form (see \cite{Naimark}):
\[ \tilde{u}_n(y) = 
\begin{vmatrix}
\gamma_1(0) & \gamma_2(0) & \gamma_3(0) & \gamma_4(0)\\ 
(\gamma_1)_y(0) & (\gamma_2)_y(0) & (\gamma_3)_y(0) & (\gamma_4)_y(0)\\
m_{31} & m_{32} & m_{33} & m_{34} \\
\gamma_1(y) & \gamma_2(y) & \gamma_3(y) & \gamma_4(y)
\end{vmatrix},
\]  
up to a multiplicative constant. Using the Laplace expansion of the determinant and scaling the expression with 
$e^{-\tau} \tau^{-6}\frac{1}{l_1 2i}$, $\tilde{u}_n$ has the approximate form (for $n$ large):
$$
\tilde{u}_n(y) =  e^{ - (n - \frac12) \pi y} - \cos {\left( (n - \frac12) \pi y\right)}  + 
\sin {\left( (n - \frac12) \pi y\right)} + (-1)^{n} e^{ (n - \frac12) \pi (y-1)} 
+\mathcal{O}(n^{-1}),
$$ for $0 \le y \le 1$. Therefore, the function $u_n$ corresponding to the eigenvalue 
$\lambda_n$ has the following asymptotic property:
\begin{eqnarray*}
u_n(x) &=&  e^{- \frac14 \int_0^y{\alpha_3(z) \, dz}} \left[ e^{ - (n - \frac12) \pi y} - \cos {\left( (n - \frac12) \pi y\right)}  + 
\sin {\left( (n - \frac12) \pi y\right)} \right. \\ & & \left. + (-1)^{n} e^{ (n - \frac12) \pi (y-1)} 
+\mathcal{O}(n^{-1}) \vphantom{\cos {\left( (n - \frac12) \pi y\right)}} \right],
\end{eqnarray*}
where $0 \le x \le L$, with $y= y(x)$ and $\alpha_3$ as in \eqref{y} and \eqref{alpha_3}.
\end{remark}
\begin{remark}
The uncontrolled system (i.e. with $A_{1,2} = 0, d_{1,2} = 0$) is 
undamped and its operator $\mathcal{A}$ then has purely imaginary eigenvalues. But 
their asymptotic behavior is still like in Theorem \ref{eigenvalue_asymtotic}, 
as can be verified by the analogue of the above computation.
\end{remark}

\section{Dissipative FEM method}\label{S-FEM}

{}From Theorem \ref{well_posedness} we know that the norm of the solution
$z(t)$ decreases in time. Using (\ref{kyp}), a straightforward calculation (for a classical solution) yields:
\begin{eqnarray}\label{norm-decay}
\frac{d}{dt} \| z \|^2_{\mathcal{H}} & = &  - \delta_1 u_{xt}(L)^2
-\frac{1}{2} \left(\zeta_{1} \cdot q_1 + \tilde\delta_1 u_{xt}(L)\right)^2  \nonumber \\ 
&  & - \delta_2 u_{t}(L)^2-\frac{1}{2} \left(\zeta_{2} \cdot q_2 + \tilde\delta_2 u_{t}(L)\right)^2 \\ 
&  & - \frac{\epsilon_1}{2} \zeta_{1}^{\top} P_1 \zeta_1- 
\frac{\epsilon_2}{2} \zeta_{2}^{\top} P_2 \zeta_2  \le 0 ,\nonumber
\end{eqnarray} 
where $\tilde\delta_j = \sqrt{2(d_j - \delta_j)}, \; j = 1, 2.$
Note that the r.h.s.\ of \eqref{norm-decay} only involves boundary terms of the beam and the control variables. 
Hence, $\frac{d}{dt} \| z \|^2_{\mathcal{H}}=0$ does \emph{not} imply $z=0$ (which can easily be verified from \eqref{evolution_notation}).

The goal of this section is to derive a FEM for (\ref{model1})--(\ref{model5}) coupled to the ODE-system 
(\ref{control_law}) that preserves this structural property of dissipativity. 
The importance of this feature is twofold: For long-time computations, the numerical scheme must of course 
be convergent in the classical sense (i.e.\ on finite time intervals) but also yield the correct large-time 
limit. Moreover, dissipativity of the scheme implies immediately unconditional stability.  

Here we shall construct first a 
time-continuous and then a time-discrete FEM that both dissipate the norm in time. 
Let us briefly discuss the different options to proceed.
(\ref{evolution_notation}) is an inconvenient starting point for deriving a weak formulation due to the high 
boundary traces of $u$ at $x=L$: The natural regularity of a weak solution would be 
$u \in C([0, \infty) ; \tilde H^2_0 (0,L))$, $v = u_t \in C([0, \infty) ;L^2 (0,L))$. Hence, the terms 
$\Lambda(L) u_{xx}(t,L)$, $(\Lambda u_{xx})_x(t,L)$ in (\ref{evolution_notation}) could only be incorporated by resorting to 
the boundary conditions \eqref{model4}, \eqref{model5}. Therefore we shall rather start from the original 
second order system \eqref{model1}--\eqref{control_law}.

\subsection{Weak formulation}

In order to derive the weak formulation, we assume the following initial conditions
\begin{subequations} \label{init_cond}
  \begin{align}
u(0) & =  u_0 \in \tilde H_0^2(0, L), \\ u_t(0) & =  v_0 \in L^2(0, L), \label{model7} \\ 
\zeta_1 (0) & =  \zeta_{1,0} \in \mathbb{R}^n, \label{model8} \\ 
\zeta_2 (0) & = \zeta_{2,0} \in \mathbb{R}^n. \label{model9} 
\end{align}
\end{subequations}
Moreover, let $v_0(L)$ and $(v_0)_x(L)$ be given in addition 
to the function $v_0$, and \textit{not} as its trace.
Multiplying (\ref{model1}) by $ w \in \tilde{H}^2_0(0,L)$, 
integrating over $[0,L]$, and  taking into account the given boundary
conditions we obtain:
\begin{eqnarray}\label{motivation-form}
& & \int_{0}^{L}{\mu u_{tt} w \,dx} + \int_{0}^{L}{\Lambda u_{xx}
 w_{xx} \,dx}  + M u_{tt}(t, L)w(L) + J u_{ttx}(t, L)w_{x}(L) \nonumber \\ 
& & + k_1 u_x(t, L) w_x(L) + k_2 u(t, L) w(L) + d_1 u_{tx}(t, L) w_x(L) + d_2 u_{t}(t, L) w(L) \\ 
& &  + c_1 \cdot \zeta_1(t) \; w_x(L) + 
 c_2 \cdot \zeta_2(t) \; w(L)= 0, 
\quad\quad\quad\quad\quad\forall w \in \tilde H^2_0(0,L), \;  t > 0.\nonumber 
\end{eqnarray}
This identity will motivate the weak formulation. First, we define the Hilbert space 
$$H := \mathbb R \times \mathbb R \times L^2(0,L),$$ with inner product
\begin{eqnarray*}
 ( \hat{\varphi}, \hat{\nu})_H & := & J \,(^1\hat{\varphi}) \, (^1\hat{\nu}) +M \,(^2\hat{\varphi}) \,(^2\hat{\nu}) + 
(\mu \,^3\hat{\varphi},\,^3\hat{\nu})_{L^2}, 
\end{eqnarray*}
for $\hat{\varphi} = (^1\hat{\varphi}, \,^2\hat{\varphi}, \,^3\hat{\varphi}),\, \nu \in H.$ 
We also define the Hilbert space
$$V := \{ \hat{w} = (w_x(L), w(L), w) \colon w \in \tilde H^2_0(0,L)\},$$ with the inner product
$$(\hat{w_1},\hat{w_2})_V = (\Lambda (w_1)_{xx}, (w_2)_{xx})_{L^2}.$$
It can be shown that $V$ is densely embedded in $H$. Therefore taking $H$ as a pivot space, we have the Gelfand triple
$$V \subset H \subset V'.$$

{}For any fixed $T>0$ we now define  $\hat{u} = (u_{x}(L),  u(L), u)$ and $\zeta_1, \zeta_2$ to be the \textit{weak solution} 
to (\ref{model1})--(\ref{control_law}) and \eqref{init_cond} if 
$$ \hat{u} \in L^2(0, T; V) \cap H^1(0, T; H) \cap H^2(0, T; V'),$$ $$\zeta_1, \zeta_2 \in H^1(0,T;\mathbb{R}^n)$$
and it satisfies: 
\begin{equation}\label{weak-form}
_{V'}<\hat{u}_{tt}, \hat{w}>_{V} + a(\hat{u}, \hat{w}) + b(\hat{u}_t, \hat{w}) + 
e_1(\zeta_1, \hat{w}) + e_2(\zeta_2, \hat{w})= 0,
\end{equation} 
$\text{for a.e. }t \in (0, T), \forall \hat{w} \in V$. 
The bilinear form $_{V'}<. , .>_{V}$ is the duality pairing between $V$ and $V'$ as a natural extension of the inner product in $H$.
The bilinear forms $a : V \times V \rightarrow \mathbb{R}$, $b : H \times H \rightarrow \mathbb{R}$ and 
$e_1, e_2 : \mathbb{R}^n \times V \rightarrow \mathbb{R}$ are given by
\begin{eqnarray*}
a(\hat{w_1},\hat{w_2}) & = & (\hat{w_1},\hat{w_2})_V + 
k_1 (w_1)_x(L) (w_2)_x(L) + k_2w_1(L) w_2(L), \\
b(\hat{\varphi},\hat{\nu}) & = & d_1 (^1\hat{\varphi}) (^1\hat{\nu}) + d_2 (^2\hat{\varphi}) (^2\hat{\nu}), \\
e_1(\zeta_1, \hat{w}) &= & c_1 \cdot \zeta_1 w_x(L), \\
e_2(\zeta_2, \hat{w}) &= & c_2 \cdot \zeta_2 w(L).
\end{eqnarray*}
Equation (\ref{weak-form}) is coupled to the ODEs
\begin{eqnarray}\label{zeta-ODE}
\begin{array}{rcl}
(\zeta_{1})_t(t)  & = & A_{1} \zeta_{1}(t) + 
b_{1} \, (^1\hat{u}_{t}(t)), \\  
(\zeta_{2})_t(t)  & = & A_{2} \zeta_{2}(t) + b_{2} \, (^2\hat{u}_{t}(t)),
\end{array}
\end{eqnarray}
with initial conditions 
\begin{subequations} \label{hat_ic}
  \begin{align}
 \hat{u}(0)  &=  \hat{u}_0 = ((u_0)_x(L), u_0(L), u_0) \in V, \label{hat_ic_a}\\
 \hat{u}_t(0) &= \hat{v}_0 = ((v_0)_x(L), v_0(L), v_0) \in H, \label{hat_ic_b} \\
 \zeta_1(0)  &=  \zeta_{1,0} \in \mathbb{R}^n, \\
 \zeta_2(0)  &=  \zeta_{2,0} \in \mathbb{R}^n.  
\end{align}
\end{subequations}
In (\ref{hat_ic_a}) the first two components of the right hand side are the boundary traces of $u_0 \in \tilde H^2_0(0, L)$,
 but in (\ref{hat_ic_b}) they are additionally given values. 
Note that in the case when
 $\hat{u} \in H^2(0, T; V)$, formulation (\ref{weak-form}) is equivalent to identity (\ref{motivation-form}). 
This weak formulation is an extension of \cite{Banks:Rosen}(Section 2) to the case where 
the beam with the tip-mass is additionally coupled to the first order ODE controller system. Here, we have to deal also with $u_t(L)$ and $u_{tx}(L)$.
And these additional first order boundary terms (in $t$), included in $b(.,.)$, require a slight generalization of the 
standard theory (as presented in \S8 of \cite{Lions:Magenes}, e.g.).

In order to give a meaning to the initial conditions (\ref{hat_ic_a}), (\ref{hat_ic_b}) we shall use the following lemma 
(special case of Theorem 3.1 in \cite{Lions:Magenes}).
\begin{lemma} \label{Continuity_Lions}
Let $X$ and $Y$ be two Hilbert spaces, such that $X$ is dense and continuously embedded in $Y$. Assume that 
\begin{eqnarray*}
u & \in & L^2(0, T; X), \\ u_t & \in & L^2(0, T; Y).
\end{eqnarray*} 
Then $$u \in C([0, T]; [X, Y]_{\frac{1}{2}}]),$$  
after, possibly, a modification on a set of measure zero. Here, the definition of \emph{intermediate spaces} as given in 
\cite{Lions:Magenes}, \S2.1, was assumed.
\end{lemma}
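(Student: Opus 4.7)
The plan is to reduce the lemma to a single uniform a priori estimate of the form
\[
\sup_{t\in[0,T]} \|u(t)\|_{[X,Y]_{1/2}}^2 \le C\bigl(\|u\|_{L^2(0,T;X)}^2 + \|u_t\|_{L^2(0,T;Y)}^2\bigr),
\]
valid for all sufficiently smooth $u$, and then to transfer this to the whole space $W(0,T) := \{u : u\in L^2(0,T;X),\,u_t\in L^2(0,T;Y)\}$ by density. Granting the estimate, the argument is standard: pick a sequence $u_n \in C^\infty([0,T];X)$ with $u_n \to u$ in $W(0,T)$; by the estimate, $u_n$ is a Cauchy sequence in $C([0,T];[X,Y]_{1/2})$, its limit $\tilde u$ is continuous into the interpolation space, and $\tilde u = u$ almost everywhere, giving the desired modification.

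Two auxiliary ingredients carry the argument. The first is the \emph{trace characterization} of the interpolation space: $a\in Y$ belongs to $[X,Y]_{1/2}$ if and only if $a = v(0)$ for some $v$ on $\mathbb{R}^+$ with $v\in L^2(\mathbb{R}^+;X)$ and $v'\in L^2(\mathbb{R}^+;Y)$, with norm equivalent to the infimum
\[
\inf_v \bigl(\|v\|_{L^2(\mathbb{R}^+;X)}^2 + \|v'\|_{L^2(\mathbb{R}^+;Y)}^2\bigr)^{1/2}.
\]
This is precisely the definition of the intermediate space in the sense of \S2.1 of Lions--Magenes that the lemma refers to. The second ingredient is \emph{density} of $C^\infty([0,T];X)$ in $W(0,T)$: one extends $u$ from $[0,T]$ to $\mathbb{R}$ by a bounded extension operator preserving the two regularity classes (e.g.\ reflection followed by a smooth cutoff), then convolves with a standard scalar mollifier in the $t$ variable; since $X\hookrightarrow Y$ densely and continuously, the mollification belongs to $C^\infty$ with values in $X$, and converges to $u$ in $W(0,T)$.

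To prove the a priori estimate for smooth $u$, I fix $t_0 \in [0,T]$ and apply the trace characterization to the shifted function $v(s) := \chi(s)\,u(t_0 + s)$ where $\chi$ is a smooth cutoff supported in a neighborhood of $0$, chosen so that $v$ is defined on $\mathbb{R}^+$ and $v(0) = u(t_0)$. Then
\[
\|u(t_0)\|_{[X,Y]_{1/2}}^2 \le C\bigl(\|v\|_{L^2(\mathbb{R}^+;X)}^2 + \|v'\|_{L^2(\mathbb{R}^+;Y)}^2\bigr) \le C'\bigl(\|u\|_{L^2(0,T;X)}^2 + \|u_t\|_{L^2(0,T;Y)}^2\bigr),
\]
with a constant independent of $t_0$ (by uniform boundedness of $\chi$ and its derivative, together with the continuous embedding $X\hookrightarrow Y$ used to control the term where the derivative falls on $\chi$). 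Taking the supremum in $t_0$ gives the required estimate.

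The main obstacle is handling the endpoints $t_0 = 0$ and $t_0 = T$: there is no room to shift $u$ to both sides, so the cutoff construction must be adapted (e.g.\ reflecting $u$ across the endpoint, or doing the analogous construction on $(-\infty,0]$ at the right endpoint). Otherwise, the argument is a clean assembly of the trace-interpolation characterization and the mollification-density step, both of which are exactly in the spirit of \cite{Lions:Magenes} and go through verbatim in our Hilbert space setting once $X$ is dense and continuously embedded in $Y$.
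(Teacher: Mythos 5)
The paper does not prove this lemma at all: it is stated and used as a special case of Theorem 3.1 in \cite{Lions:Magenes}, so there is no in-paper argument to compare against. Judged on its own, your proof is the standard one from that reference and is correct in outline: the uniform bound $\sup_{t\in[0,T]}\|u(t)\|_{[X,Y]_{1/2}}\le C\left(\|u\|_{L^2(0,T;X)}+\|u_t\|_{L^2(0,T;Y)}\right)$ for smooth $u$, obtained by localizing around $t_0$ and invoking the trace characterization, combined with density of smooth $X$-valued functions in $W(0,T)$, does yield continuity after modification on a null set. The endpoint issue you flag is genuine but is resolved exactly as you indicate: shift forward for $t_0\le T/2$ and backward for $t_0>T/2$, using the time-reversal invariance of the trace space, with a single cutoff supported in $[0,T/2]$ so the constant is uniform.

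Two points deserve correction. First, the trace characterization of $[X,Y]_{1/2}$ as the set of values $v(0)$ of functions $v\in L^2(\mathbb{R}^+;X)$ with $v'\in L^2(\mathbb{R}^+;Y)$ is \emph{not} the definition of the intermediate space in \S 2.1 of \cite{Lions:Magenes}; there $[X,Y]_\theta$ is defined spectrally, via fractional powers of the self-adjoint operator canonically associated with the dense continuous embedding $X\subset Y$. The identification with the trace space is itself the trace theorem of \S 3 of that chapter, and it carries essentially all the depth of the lemma you are proving. Your argument is therefore legitimate only as a reduction to that cited characterization, and should be presented as such rather than as a proof from the definition. Second, in the density step the even reflection across an endpoint produces the asserted weak derivative (with no Dirac contribution at the reflection point) only because $u$ already admits an absolutely continuous representative with values in $Y$ (which follows from $u,u_t\in L^1(0,T;Y)$); this elementary preliminary should be recorded before the extension is constructed. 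With these caveats the proposal is sound.
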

\begin{theorem} \label{existence_weak_sol}
\begin{enumerate}[(a)]
\item The weak formulation (\ref{weak-form}) -- (\ref{hat_ic}) has a unique solution $(\hat{u}, \zeta_1, \zeta_2)$.
\item The weak solution has the additional regularity
\begin{subequations} 
  \begin{align}
 & \hat{u} \in L^{\infty}(0,T;V), \quad \hat{u}_t \in L^{\infty}(0,T;H), & \label{reg_infty}\\
& \zeta_1, \zeta_2 \in C([0,T]; \mathbb{R}^n), & \label{reg_zeta}\\
& \hat{u} \in C([0,T];[V,H]_{\frac12}), & \label{reg_con_u}\\ & \hat{u}_t \in C([0,T];[V,H]_{\frac12}^{'}). & \label{reg_con_u_t} 
\end{align}
\end{subequations}
\end{enumerate}
\end{theorem}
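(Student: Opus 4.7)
The plan is to prove existence by a Faedo--Galerkin approximation, derive uniform bounds that mirror the dissipation identity (\ref{norm-decay}), and pass to the limit; the time-continuity statements in (\ref{reg_con_u})--(\ref{reg_con_u_t}) then follow from Lemma \ref{Continuity_Lions}.

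First I would pick a countable basis $\{\hat{w}_k\}_{k\ge 1}$ of $V$ (e.g.\ eigenfunctions of $a(\cdot,\cdot)$ relative to the $H$-pivot) and set $V_m=\mathrm{span}\{\hat{w}_1,\dots,\hat{w}_m\}$. The approximation $\hat{u}^m(t)=\sum_{k=1}^m g_k^m(t)\hat{w}_k$, coupled with finite-dimensional $\zeta_1^m,\zeta_2^m$, is defined by projecting (\ref{weak-form}) onto $V_m$ and pairing with (\ref{zeta-ODE}); the initial data are projected into $V_m$ using the $V$- and $H$-inner product for $\hat{u}_0$ and $\hat{v}_0$, respectively. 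This yields a linear ODE system with an invertible mass matrix, so Picard--Lindel\"of produces a unique global-in-time solution. Testing the Galerkin equation against $\hat{u}^m_t\in V_m$, pairing the $\zeta^m_j$-ODE with $P_j\zeta^m_j$, and adding, the symmetry of $a$ together with the KYP relations (\ref{kyp}) yield exactly the Galerkin analogue of (\ref{norm-decay}); in particular
\[
\mathcal E^m(t):=\tfrac12|\hat{u}^m_t|_H^2+\tfrac12 a(\hat{u}^m,\hat{u}^m)+\tfrac12(\zeta_1^m)^\top P_1\zeta_1^m+\tfrac12(\zeta_2^m)^\top P_2\zeta_2^m
\]
is non-increasing on $[0,T]$. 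Hence $\hat{u}^m$ is bounded in $L^\infty(0,T;V)$, $\hat{u}^m_t$ in $L^\infty(0,T;H)$, and $\zeta_j^m$ in $L^\infty(0,T;\mathbb{R}^n)$, uniformly in $m$.

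Extracting a weakly-$*$ convergent subsequence and passing to the limit in the projected weak formulation produces a weak solution $(\hat u,\zeta_1,\zeta_2)$ with the regularity (\ref{reg_infty}); a comparison argument in (\ref{weak-form}) gives $\hat{u}_{tt}\in L^\infty(0,T;V')$, while the integral form of (\ref{zeta-ODE}) directly produces $\zeta_j\in H^1(0,T;\mathbb R^n)\hookrightarrow C([0,T];\mathbb R^n)$, i.e.\ (\ref{reg_zeta}). Applying Lemma \ref{Continuity_Lions} to the pair $(V,H)$ yields (\ref{reg_con_u}); applying it to $(H,V')$ together with the standard duality $[V,H]_{1/2}'=[H,V']_{1/2}$ yields (\ref{reg_con_u_t}). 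Uniqueness is then immediate by linearity: the difference of two weak solutions has zero data, and re-running the above energy identity shows $\mathcal E\equiv 0$, so the solutions coincide.

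The main subtlety, rather than a deep obstacle, is the structure of the damping form $b$: it is defined on $H\times H$ and realises $d_1 u_{xt}(L)^2+d_2 u_t(L)^2$, even though $u_t$ only lives in $L^2(0,L)$ and has no boundary traces. This is exactly why $V$ and $H$ carry the boundary slots $u_x(L), u(L)$ as \emph{independent} components of the tuple, and why the prescribed values of $(v_0)_x(L)$ and $v_0(L)$ must be supplied in (\ref{hat_ic_b}) independently of the trace of $v_0$. The classical framework of \S8 of \cite{Lions:Magenes} therefore needs a mild extension, but inside the Galerkin step this is harmless because all test functions and trial approximations lie in $V$, where the first two slots \emph{are} traces; $b(\hat u^m_t,\hat u^m_t)\ge 0$ on the approximation level, and the weak limits $b(\hat u^m_t,\hat w)$ for fixed $\hat w\in V$ pose no difficulty.
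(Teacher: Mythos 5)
Your existence argument follows essentially the same route as the paper's (Faedo--Galerkin basis, energy decay via the KYP relations \eqref{kyp}, uniform bounds, weak-$*$ compactness, and Lemma \ref{Continuity_Lions} plus the duality $[X,Y]_\theta'=[Y',X']_{1-\theta}$ for \eqref{reg_con_u}--\eqref{reg_con_u_t}), and your observation about the boundary slots of $H$ being independent components rather than traces is exactly the point that makes the Galerkin step and the limit passage in $b(\cdot,\cdot)$ work. Two steps are missing, one of them a genuine gap. First, the minor one: you do not verify that the weak limit actually attains the initial data $\hat u(0)=\hat u_0$, $\hat u_t(0)=\hat v_0$; the paper does this by integrating by parts in time against test functions with $\hat w(T)=\hat w_t(T)=0$ and comparing the resulting boundary terms with those of the Galerkin identity.

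The genuine gap is uniqueness. You claim it is ``immediate'' by re-running the energy identity on the difference of two solutions, but the energy identity is obtained by testing \eqref{weak-form} with $\hat u_t$, and a weak solution only has $\hat u_t\in L^\infty(0,T;H)$ --- it is \emph{not} in $V$ for a.e.\ $t$, hence not an admissible test function. At the stage where uniqueness must be proved you have the energy inequality only along the Galerkin sequence, not for an arbitrary weak solution with zero data. The paper circumvents this with the classical Lions--Magenes device: for fixed $s\in(0,T)$ one tests with $\hat U(t)=\int_t^s\hat u(\tau)\,d\tau$ (which \emph{does} lie in $V$) and with $Z_i(t)=\int_0^t\zeta_i(\tau)\,d\tau$ in the controller ODEs, arriving at $\tfrac12\|\hat u(s)\|_H^2+\tfrac12 a(\hat U(0),\hat U(0))+\sum_i\tfrac12 Z_i(s)^\top P_iZ_i(s)\le 0$ and hence $\hat u\equiv0$, $\zeta_i\equiv0$. (Alternatively one could first justify the energy identity for weak solutions by the mollification argument the paper uses in the proof of Theorem \ref{continuity_weak_sol}, but some such argument is required; it is not automatic.)
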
 
Furthermore, even stronger continuity for the weak solution can be shown:
\begin{theorem} \label{continuity_weak_sol}
 After, possibly, a modification on a set of measure zero, the weak solution $\hat{u}$ of (\ref{weak-form})-(\ref{hat_ic}) satisfies
\begin{eqnarray*}
\hat{u}   & \in & C([0,T]; V), \\
\hat{u}_t & \in & C([0,T]; H).
\end{eqnarray*}
\end{theorem}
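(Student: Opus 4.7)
The strategy is to identify the weak solution of (\ref{weak-form})--(\ref{hat_ic}) with the mild semigroup solution of the first-order system (\ref{evolution_notation}) supplied by Theorem \ref{well_posedness}, and to read off the desired strong continuity from the fact that this mild solution already lies in $C([0,\infty); \mathcal{H})$. The key observation is that the $\mathcal{H}$-norm simultaneously controls $\|\hat u\|_V^2$ (via $\tfrac12\int_0^L \Lambda u_{xx}^2\,dx$ together with the boundary contributions $\tfrac12 k_1 u_x(L)^2 + \tfrac12 k_2 u(L)^2$) and $\|\hat u_t\|_H^2$ (via $\tfrac12\int_0^L \mu v^2\,dx + \tfrac{1}{2J}\xi^2 + \tfrac{1}{2M}\psi^2$), once one uses the identifications $\xi = J v_x(L)$, $\psi = M v(L)$ built into $D(\mathcal{A})$.

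Given initial data as in (\ref{hat_ic}), I would assemble
\[
z_0 := \bigl(u_0,\, v_0,\, \zeta_{1,0},\, \zeta_{2,0},\, J(v_0)_x(L),\, M v_0(L)\bigr)^{\top} \in \mathcal{H},
\]
where the last two entries are the \emph{independently prescribed} boundary values of $v_0$ from (\ref{hat_ic_b}), and let $z(t) = T(t) z_0$ be the mild solution from Theorem \ref{well_posedness}. For $z_0 \in D(\mathcal{A})$ this is a classical solution, so (\ref{model1})--(\ref{model5}) together with (\ref{control_law}) hold pointwise; multiplying (\ref{model1}) by $w \in \tilde H^2_0(0,L)$, integrating by parts, and invoking (\ref{model4})--(\ref{model5}) reproduces (\ref{motivation-form}) and hence (\ref{weak-form}). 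Thus, for regular data, the semigroup solution \emph{is} a weak solution in the sense of Theorem \ref{existence_weak_sol}. For arbitrary $z_0 \in \mathcal{H}$ I would use density of $D(\mathcal{A})$ in $\mathcal{H}$ to choose $z_0^n \in D(\mathcal{A})$ with $z_0^n \to z_0$ in $\mathcal{H}$; the corresponding classical solutions $z^n$ satisfy (\ref{weak-form})--(\ref{zeta-ODE}) and, by contractivity of $T(t)$, converge to $z$ in $C([0,T]; \mathcal{H})$. Since every bilinear form appearing in (\ref{weak-form})--(\ref{zeta-ODE}) is continuous in the $\mathcal{H}$-topology (the boundary contributions in $b(\cdot,\cdot)$, $e_1$ and $e_2$ being precisely controlled by the $\xi, \psi$ components and the $k_j$-terms of $\|\cdot\|_{\mathcal{H}}$), passing to the limit shows that the mild solution associated with general $z_0$ still satisfies (\ref{weak-form})--(\ref{hat_ic}).

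By the uniqueness statement in Theorem \ref{existence_weak_sol}(a), this mild solution must coincide with the weak solution already constructed. The conclusion is then immediate: since
\[
\|z(t) - z(s)\|_{\mathcal{H}}^2 \ge c\bigl(\|\hat u(t) - \hat u(s)\|_V^2 + \|\hat u_t(t) - \hat u_t(s)\|_H^2\bigr)
\]
for some $c > 0$, the $C([0,T]; \mathcal{H})$-regularity of $z$ delivers $\hat u \in C([0,T]; V)$ and $\hat u_t \in C([0,T]; H)$. The main technical obstacle is the density step: one needs $z_0^n \in D(\mathcal{A})$ with $z_0^n \to z_0$ in $\mathcal{H}$, which requires $v_0^n \in \tilde H^2_0(0,L)$ whose traces $v_0^n(L)$ and $(v_0^n)_x(L)$ realize the independently prescribed values $\psi_0/M$ and $\xi_0/J$, while simultaneously $v_0^n \to v_0$ in $L^2(0,L)$. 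This is achievable by a standard lifting through two fixed $\tilde H^2_0(0,L)$-functions with prescribed traces at $x=L$, combined with $C_c^\infty(0,L)$-approximation of the remainder—precisely the flexibility that the augmented state space $\mathcal{H}$ with its independent $\xi, \psi$ components was designed to provide.
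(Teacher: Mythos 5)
Your proof is correct in outline, but it takes a genuinely different route from the paper. The paper stays entirely inside the variational (Galerkin/Lions--Magenes) framework of Appendix A: it first upgrades the $L^\infty$-bounds \eqref{reg_infty} together with \eqref{reg_con_u}, \eqref{reg_con_u_t} to \emph{weak} continuity $\hat u\in C_w([0,T];V)$, $\hat u_t\in C_w([0,T];H)$ via Lemma \ref{weak_continuity}; it then mollifies in time to establish the energy identity \eqref{limit_epsilon_energy}, so that $t\mapsto\hat E(t;\hat u,\zeta_1,\zeta_2)$ is absolutely continuous; finally it combines norm-continuity with weak continuity (the sequence $\chi_n$) to conclude strong continuity. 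Your argument instead identifies the weak solution with the mild semigroup solution of \eqref{evolution_notation}: regular data in $D(\mathcal{A})$ give classical solutions that satisfy \eqref{motivation-form} after integration by parts, density of $D(\mathcal{A})$ plus contractivity let you pass to the limit in the weak formulation, and uniqueness from Theorem \ref{existence_weak_sol}(a) then forces the two solutions to coincide, so the $C([0,T];\mathcal{H})$-regularity of $T(t)z_0$ delivers the claim (your handling of the independently prescribed traces $v_0(L)$, $(v_0)_x(L)$ via the $\xi,\psi$ components is exactly right, and the density step is for free since $\mathcal{A}$ generates a $C_0$-semigroup). What each approach buys: yours is shorter, avoids the mollification argument, and yields as a byproduct the useful fact that the weak and mild solutions agree; but it leans on Theorem \ref{well_posedness}, which the paper only imports from \cite{Kugi:Thull} (proved there for constant $\mu,\Lambda$, with the inhomogeneous case asserted to be ``analogous''). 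The paper's proof is deliberately self-contained within the weak-solution theory and therefore provides an independent well-posedness argument; it is also the one whose intermediate identity \eqref{limit_epsilon_energy} is reused later to justify the dissipativity of the energy functional for the weak solution. One small point to make rigorous in your version: when passing to the limit in \eqref{weak-form} you should note that $(\hat u^n)_{tt}$ converges in $C([0,T];V')$ because it is expressed through the equation as a continuous function of $(\hat u^n,(\hat u^n)_t,\zeta_1^n,\zeta_2^n)$, so the limit really is the distributional second derivative; with that observation the argument closes.
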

The proofs of Theorem \ref{existence_weak_sol} and \ref{continuity_weak_sol} are given 
in Appendix A.

\subsection{Semi-discrete scheme: space discretization}

Now let $W_h \subset \tilde H_0^2(0,L)$ be a finite dimensional space. Its elements
are globally $C^1[0,L]$, due to a Sobolev embedding.
{}For some fixed basis $w_j, j=1, \dots, N$ the Galerkin approximation of (\ref{weak-form}) reads:
{}Find $u_h \in C^2([0, \infty), W_h)$, i.e. $\hat{u}_h= ((u_h)_x(L), u_h(L), u_h) \in C^2([0, \infty), V)$, and 
$\tilde \zeta_{1,2} \in C^1([0, \infty),\mathbb{R}^n)$ with
\begin{equation} \label{fem} 
\begin{array}{ccl}
 && \int_{0}^{L}{\mu (u_{h})_{tt} w_j \,dx}   + 
\int_{0}^{L}{\Lambda (u_h)_{xx} (w_j)_{xx} \,dx}   \\
 & & + M (u_h)_{tt}(t,L)w_j(L) + J (u_h)_{xtt}(t,L)(w_j)_{x}(L) 
 \\ & & + k_1 (u_{h})_x(t,L) (w_{j})_x(L) +  k_2 u_h(t,L) w_j(L)  \\ 
& & + d_1 (u_h)_{xt}(t,L) (w_j)_x(L) + d_2 (u_h)_{t}(t,L) (w_j)(L)  \\ 
& & + c_1 \cdot \tilde\zeta_1(t) \; (w_j)_x(L) + 
 c_2 \cdot \tilde\zeta_2(t) \; w_j(L)= 0, 
\quad\quad\quad j=1, \dots, N, \; t > 0,
\end{array}
\end{equation}
coupled to the analogue of \eqref{zeta-ODE}:
\begin{eqnarray}\label{zeta-tilde-ODE}
\begin{array}{rcl}
(\tilde\zeta_{1})_t(t)  & = & A_{1} \tilde\zeta_{1}(t) + 
b_{1}  (u_h)_{xt} (t, L), \\  
(\tilde\zeta_{2})_t(t)  & = & A_{2} \tilde\zeta_{2}(t) + b_{2} (u_h)_{t} (t, L),
\end{array}
\end{eqnarray}
and the initial conditions
\begin{eqnarray*}
u_h(0, \ldotp) & = & u_{h,0} \in W_h, \\ (u_{h})_t(0, \ldotp) & = & v_{h,0} \in W_h,
 \\ \tilde\zeta_1(0) & = & \zeta_{1,0} \in \mathbb{R}^n, \\ \tilde\zeta_2(0) & = & \zeta_{2,0}
\in \mathbb{R}^n. 
\end{eqnarray*}
\eqref{fem}  is a second order ODE-system in time. 
Expanding its solution in the chosen basis, i.e.\
 \[u_h(t,x) = \sum_{i=1}^{N}{U_i(t) w_i(x)},\] 
and denoting its coefficients by the vector 
\[ \mathbb{U} = \left[ \begin{array}{c c c c} U_1 &
 U_2 & \dots & U_N \end{array} \right]^{\top} \] 
yields the equivalent vector equation: 
\begin{equation}\label{vector-system}
 \mathbb{A} \mathbb{U}_{tt} + \mathbb{B}
 \mathbb{U}_{t} + \mathbb{K} \mathbb{U} + \mathbb{C}(t) = 0 .
\end{equation}
Its coefficient matrices are defined as 
\begin{eqnarray*} 
\mathbb{A}_{i,j} &:=& \int_{0}^{L}{\mu \, w_i
 w_j \,dx} + M w_i(L)w_j(L) + J (w_i)_{x}(L)(w_j)_{x}(L),\\
\mathbb{B}_{i,j} &:=& d_1 (w_i)_{x}(L)(w_j)_{x}(L) +  d_2 w_i(L)w_j(L),\\
\mathbb{K}_{i,j} &:=&  \int_{0}^{L}{\Lambda (w_i)_{xx} (w_j)_{xx} \,dx}
+k_1 (w_i)_{x}(L)(w_j)_{x}(L) +  k_2 w_i(L)w_j(L),\\
&& i,j = 1, \dots, N,
\end{eqnarray*}
and the vector $\mathbb{C}$ has the entries
 \[ \mathbb{C}_j(t) =  c_1 \cdot \tilde\zeta_1(t) \; (w_j)_x(L) + 
 c_2 \cdot \tilde\zeta_2(t) \; w_j(L), \quad j = 1, \dots, N.\]
The matrix $\mathbb{K}$ is symmetric positive definite, since we assumed $k_{1,2}>0$.
Since also $\mathbb{A}$ is symmetric positive definite, one sees very easily that the IVP corresponding to
the coupled problem \eqref{vector-system}, \eqref{zeta-tilde-ODE} is uniquely solvable.

For a final specification of the FEM we need to choose an appropriate discrete space. 
Only for notational simplicity, we shall assume a uniform distribution of nodes on $[0,L]$:
\[ x_m = m h, \quad m \in \{0, 1, \dots, P\}, \] 
where $h=\frac{L}{P}.$
A standard choice for the discrete space $W_h$ is a space of piecewise cubic polynomials with both 
displacement and slope continuity across element boundaries, also called Hermitian cubic polynomials (see 
\cite{Shames:Dym}, 
\cite{Bar-Yoseph:Fisher:Gottlieb}, e.g.).
They have been employed not only for the Euler-Bernoulli beam, but also 
Timoshenko beams (cf.\ \cite{Falsone:Settineri}).
To define a basis for $W_h$ (Hermite cubic basis, see e.g. \cite{Scott}), 
we associate two piecewise cubic functions with each node $x_m,\;m\ge1$ satisfying:
\begin{eqnarray*}
w_{2m-1} (x_k) = \left\{ \begin{array}{c l}
1, & m=k \\ 0, & m \ne k
\end{array} \right. &\qquad & w'_{2m-1} (x_k) = 0 ,\\ 
w'_{2m} (x_k) = \left\{ \begin{array}{c l}
1, & m=k \\ 0, & m \ne k
\end{array} \right. & \qquad & w_{2m} (x_k) = 0,
\end{eqnarray*} 
for all $k = 0, \dots, P$. Hence, the nodal values of a function and of its derivative are 
the associated degrees of freedom. Due to the boundary conditions at $x=0$ in $W_h\subset \tilde H_0^2$, 
the basis set does not include the functions $w_{-1}$ and $w_{0}$ associated to the node $x_0=0$. Thus, $N=2P$.
For the coupling to the control variables we shall need the boundary values of $u_h$. The above basis 
yields the simple relations $u_h(t,L)=U_{N-1}(t),\;(u_h)_x(t,L)=U_{N}(t)$. Compact support of the 
basis functions $\{ w_j \}_{j=1}^{N}$ leads to a sparse structure of the matrices $\mathbb{A}$, $\mathbb{B}$, and $\mathbb{K}$:
 $\mathbb{A}$ and $\mathbb{K}$ are tridiagonal, $\mathbb{B}$ is diagonal with only two non-zero elements $\mathbb{B}_{N-1, N-1} = d_2$, 
$\mathbb{B}_{N, N} = d_1$. And the vector $\mathbb{C}$ has all zero entries except for $\mathbb{C}_{N-1} = c_2 \cdot \tilde{\zeta_2}$, 
$\mathbb{C}_{N} = c_1 \cdot \tilde{\zeta_1}$.

Next, we shall show that the semi-discrete solution $u_h(t)$ decreases in time. As an analogue of the norm 
$\|z(t)\|_{\mathcal{H}}$ from \S\ref{s2}, we first define the following time dependent functional for a 
trajectory $u \in  C^2([0, \infty) ; \tilde H^2_0 (0,L))$ and $\zeta_{1,2} \in C^1([0, \infty) ; \mathbb{R}^n)$:
\begin{eqnarray} \label{energy_functional}
  E(t; u,\zeta_{1},\zeta_{2}) \nonumber
  &\!\!:=\!\!&\frac12 \int_{0}^{L}\left(\Lambda u_{xx}(t,x)^2+\mu u_t(t,x)^2\right)\,dx
  +\frac{M}{2}u_t(t,L)^2+\frac{J}{2}u_{xt}(t,L)^2\\
  &&+\frac{k_1}{2}u_x(t,L)^2 + \frac{k_2}{2}u(t,L)^2
  +\frac12 \zeta_1^\top(t) P_1 \zeta_1(t)+\frac12 \zeta_2^\top(t) P_2 \zeta_2(t).
\end{eqnarray} 
For a classical solution of \eqref{evolution_notation} in $D(\mathcal A)$ we have
$E(t;u,\zeta_{1},\zeta_{2})=\|z(t)\|_{\mathcal{H}}^2$.

\begin{theorem}\label{th:semidiscrete-decay}
Let $u_h \in  C^2([0, \infty) ; \tilde H^2_0 (0,L))$ and $\tilde\zeta_{1,2} \in C^1([0, \infty) ; 
\mathbb{R}^n)$ solve \eqref{fem}, \eqref{zeta-tilde-ODE}. Then it holds for $t>0$:
\begin{eqnarray*}
\frac{d}{dt}E(t;u_h,\tilde\zeta_{1},\tilde\zeta_{2})& = &  - \frac{\epsilon_1}{2}
 \tilde\zeta_{1}^{\top} P_1 \tilde\zeta_1 -\frac{1}{2} \left(\tilde\zeta_{1} \cdot q_1 + 
\tilde\delta_1 (u_h)_{xt}(L)\right)^2 - \delta_1 (u_h)_{xt}(L)^2 \\
 &  & - \frac{\epsilon_2}{2} \tilde\zeta_{2}^{\top} P_2 \tilde\zeta_2 -
\frac{1}{2} \left(\tilde\zeta_{2} \cdot q_2 + \tilde\delta_2
 (u_h)_{t}(L)\right)^2 - \delta_2 (u_h)_{t}(L)^2 \le 0.
\end{eqnarray*} 
\end{theorem}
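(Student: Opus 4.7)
The plan is to imitate the continuous energy calculation \eqref{norm-decay}, using as the key new ingredient the fact that in a Galerkin scheme the time derivative of the discrete solution lies in the same discrete space, so $(u_h)_t$ is itself an admissible test function.

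First I would differentiate the functional $E(t;u_h,\tilde\zeta_1,\tilde\zeta_2)$ term by term in $t$. This yields six boundary/volume terms from the beam part (of the form $\int\Lambda u_{h,xx}u_{h,xxt}$, $\int \mu u_{h,t}u_{h,tt}$, $Mu_{h,t}(L)u_{h,tt}(L)$, etc.) plus the controller quadratics $\tilde\zeta_j^\top P_j(\tilde\zeta_j)_t$ for $j=1,2$.

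Next, since $u_h(t,\cdot)\in W_h$, the same is true of $(u_h)_t(t,\cdot)$; hence I may multiply the $j$-th equation of \eqref{fem} by the corresponding coefficient $(U_j)_t(t)$ and sum over $j$. This replaces the test function $w_j$ by $(u_h)_t$ and produces exactly the six beam terms appearing in $\frac{dE}{dt}$, now identified with
\[
-d_1(u_h)_{xt}(L)^2-d_2(u_h)_t(L)^2-c_1\!\cdot\!\tilde\zeta_1\,(u_h)_{xt}(L)-c_2\!\cdot\!\tilde\zeta_2\,(u_h)_t(L).
\]
Substituting this into the expression for $\frac{dE}{dt}$ leaves only boundary/controller terms.

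Then I would invoke the ODEs \eqref{zeta-tilde-ODE} to rewrite $\tilde\zeta_j^\top P_j(\tilde\zeta_j)_t$ as $\tilde\zeta_j^\top P_jA_j\tilde\zeta_j+\tilde\zeta_j^\top P_jb_j\cdot(\text{boundary trace})$, and apply the KYP identities \eqref{kyp}. The first relation gives $\tilde\zeta_j^\top P_jA_j\tilde\zeta_j=-\tfrac12(\tilde\zeta_j\!\cdot\! q_j)^2-\tfrac{\epsilon_j}{2}\tilde\zeta_j^\top P_j\tilde\zeta_j$, while the second, $P_jb_j=c_j-q_j\tilde\delta_j$, produces a $c_j\!\cdot\!\tilde\zeta_j$ piece that exactly cancels the cross term from the previous paragraph, together with a mixed $-\tilde\delta_j(\tilde\zeta_j\!\cdot\!q_j)(\text{trace})$ piece.

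Finally I would complete the square in each of the remaining blocks: writing $d_j=\delta_j+\tfrac12\tilde\delta_j^2$ lets one group
\[
-d_j(\text{trace})^2-\tilde\delta_j(\tilde\zeta_j\!\cdot\!q_j)(\text{trace})-\tfrac12(\tilde\zeta_j\!\cdot\!q_j)^2
=-\delta_j(\text{trace})^2-\tfrac12\bigl(\tilde\zeta_j\!\cdot\!q_j+\tilde\delta_j(\text{trace})\bigr)^2,
\]
which, added to $-\tfrac{\epsilon_j}{2}\tilde\zeta_j^\top P_j\tilde\zeta_j$ for $j=1,2$, produces exactly the six manifestly nonpositive terms in the statement. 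The inequality $\frac{d}{dt}E\le 0$ is then immediate from positive-definiteness of $P_1,P_2$ and positivity of $\delta_j,\epsilon_j$.

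There is no genuine obstacle here: the whole argument parallels the continuous derivation of \eqref{norm-decay}. The one point that must be stressed — and which is the essential reason a Galerkin discretization preserves dissipativity — is the structural fact that $(u_h)_t\in W_h$, so that testing \eqref{fem} against $(u_h)_t$ is legitimate without any projection error; any scheme that broke this (e.g.\ lumped mass, or non-Galerkin finite differences at the boundary) would generate uncontrolled residual terms that could spoil the sign.
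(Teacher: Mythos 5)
Your proposal is correct and follows essentially the same route as the paper's proof: test \eqref{fem} with $w_h=(u_h)_t$ (admissible since $(u_h)_t\in W_h$), substitute the resulting boundary terms into $\frac{d}{dt}E$, and then conclude via \eqref{zeta-tilde-ODE} and the Kalman-Yakubovic-Popov identities \eqref{kyp} by completing the square. The only difference is that you spell out the final square-completion step, which the paper leaves implicit in ``the result follows with \eqref{zeta-tilde-ODE} and \eqref{kyp}.''
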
 
\begin{proof}
In the following computation we use  (\ref{fem}) with the test function $w_h= (u_h)_t$: 
\begin{eqnarray*}
\frac{d}{dt}E(t;u_h,\tilde\zeta_{1},\tilde\zeta_{2})& = & 
\int_{0}^{L}{ \Lambda (u_{h})_{xx} (u_{h})_{xxt} \,dx} + 
\int_{0}^{L}{\mu (u_h)_t (u_{h})_{tt} \,dx} \\ && +  
 M (u_h)_t(L) (u_h)_{tt}(L) + J (u_h)_{tx}(L) (u_h)_{ttx}(L) \\
 &&+  k_1 (u_h)_x(L)(u_h)_{xt}(L)+ k_2 (u_h)(L)(u_h)_t(L) \\  && + 
 \tilde\zeta_1^{\top} P_1 (\tilde\zeta_1)_t +  \tilde\zeta_2^{\top} P_2 (\tilde\zeta_2)_t
 \\  & = &  -d_1 (u_h)_{xt}(L)^2 -d_2 (u_h)_{t}(L)^2  \\ && -c_1 \cdot
\tilde\zeta_1 (u_h)_{xt}(L) -c_2 \cdot \tilde\zeta_2(u_h)_{t}(L) + 
\tilde\zeta_1^{\top} P_1 (\tilde\zeta_1)_t +  \tilde\zeta_2^{\top} P_2 (\tilde\zeta_2)_t, 
\end{eqnarray*}
and the result follows with \eqref{zeta-tilde-ODE} and \eqref{kyp}.
\end{proof}
In the undamped case (i.e.~$A_j=0, d_j=0$) the energy $E$ is clearly preserved in the semi-discrete system.
Furthermore, it has been shown in the proof of Theorem \ref{continuity_weak_sol}
 that the energy functional for the weak solution $\hat{u}$, $\zeta_{1}, \zeta_2$ of 
\eqref{weak-form} - \eqref{hat_ic} has an analogous dissipative property, cf.~\eqref{limit_epsilon_energy}.

\subsection{Error estimates: semi-discrete scheme}

Since using cubic polynomials for the space approximation, we shall obtain accuracy 
of order two in space (in $H^2(0, L)$). Thereby, the common method for obtaining error estimates (cf. \cite{Choo:Chung})
will be adjusted to the problem at hand. With $\tilde u$ we denote the nodal projection of the weak solution 
$u$ to $W_h$, defined in terms of Hermite polynomials:
\begin{eqnarray*}
\tilde u (t,x) & = & \sum_{m = 1}^{P}{u(t,x_m) w_{2m-1}(x)} + \sum_{m=1}^{P}{ u_x(t,x_m) w_{2m}(x)}.
\end{eqnarray*}
Assuming that 
\begin{equation} \label{regularity_sol}
\begin{array}{l}
u \in C([0, T]; \tilde{H}_0^4(0, L)), \\ 
u_t \in L^2(0, T; \tilde{H}_0^4(0, L)), \\
u_{tt} \in  L^2(0, T; \tilde{H}_0^2(0, L)),
\end{array}
\end{equation} it can be seen (e.g. in \cite{Brenner:Scott}, \cite{Choo:Chung}) 
that a.e. in $t$:
\begin{equation} \label{Hermit_estimates}
\begin{array}{r c l}
\| u - \tilde u \|_{H^2(0, L)} & \le & C h^2 \| u \|_{H^4(0, L)}, \\
\| u_t - \tilde u_t \|_{H^2(0, L)} & \le & C h^2 \| u_t \|_{H^4(0, L)},\\
\| u_{tt} - \tilde u_{tt} \|_{L^2(0, L)} & \le & C h^2 \| u_{tt} \|_{H^2(0, L)}.
\end{array}
\end{equation}
We define the error of the semi-discrete solution $(u_h, \tilde \zeta_1, \tilde \zeta_2)$ as
 $\epsilon_h := u_h - \tilde u \in W_h$ and $\zeta^{e}_i := \tilde\zeta_i- \zeta_i, \; i = 1, 2.$ 
Then using (\ref{fem})--(\ref{zeta-tilde-ODE}) we obtain 

\begin{equation*} \label{error_semi}
\begin{array}{ccl}
 && \int_{0}^{L}{\mu (\epsilon_{h})_{tt} w \,dx}   + 
\int_{0}^{L}{\Lambda (\epsilon_h)_{xx} w_{xx} \,dx}   \\
 & & + M (\epsilon_h)_{tt}(t,L)w(L) + J (\epsilon_h)_{xtt}(t,L) w_{x}(L) 
 \\ & & + k_1 (\epsilon_{h})_x(t,L) w_x(L) +  k_2 \epsilon_h(t,L) w(L)  \\ 
& & + d_1 (\epsilon_h)_{xt}(t,L) w_x(L) + d_2 (\epsilon_h)_{t}(t,L) w(L)  \\ 
& & + c_1 \cdot \zeta^{e}_1(t) \; w_x(L) + 
 c_2 \cdot \zeta^{e}_2(t) \; w(L)\\ 
&& =\int_{0}^{L}{\mu (u_{tt} - \tilde u_{tt} ) w \,dx}   + 
\int_{0}^{L}{\Lambda (u_{xx} - \tilde u_{xx}) w_{xx} \,dx} , 
\quad\quad\quad \forall w \in W_h, \; t > 0,
\end{array}
\end{equation*}
coupled to:
\begin{eqnarray*} \label{error_semi_zeta}
\begin{array}{rcl}
(\zeta^{e}_{1})_t(t)  & = & A_{1} \zeta^{e}_{1}(t) + 
b_{1}  (\epsilon_h)_{xt} (t, L), \\  
(\zeta^{e}_{2})_t(t)  & = & A_{2} \zeta^{e}_{2}(t) + b_{2} (\epsilon_h)_{t} (t, L).
\end{array}
\end{eqnarray*}
Using $w = (\epsilon_h)_t$ and proceeding as in the proof of Theorem \ref{th:semidiscrete-decay} we obtain 
\begin{equation} \label{semi_aux} \begin{array}{l c l}
\frac12 \frac{d }{dt} E(t; \epsilon_h, \zeta^{e}_1, \zeta^{e}_2 ) & \le &\int_{0}^{L}{ \mu  (u_{tt}
 - \tilde u_{tt} ) (\epsilon_h)_t \,dx}   + 
\int_{0}^{L}{\Lambda (u_{xx} - \tilde u_{xx}) (\epsilon_h)_{txx} \,dx}, \end{array}
\end{equation}for  a.e. $t \in [0, T]$.  Integrating (\ref{semi_aux}) in time, and performing partial integration, we get 
\begin{equation} \label{integrated}
\begin{array}{r c l}
 E(t ; \epsilon_h, \zeta^{e}_1, \zeta^{e}_2 )  &\le&    E(0; \epsilon_h(0), \zeta^{e}_1(0), \zeta^{e}_2(0) ) 
\\ &+&
2 \int_{0}^{t}{\int_{0}^{L}{\mu (u_{tt}(s,x) - \tilde{u}_{tt}(s,x))(\epsilon_h)_t(s,x) \, dx} \,ds}
\\ &+&
2 \int_{0}^{L}{\Lambda (u_{xx}(t,x) - \tilde{u}_{xx}(t,x))(\epsilon_h)_{xx}(t,x) \, dx} 
\\ &+&
2 \int_{0}^{L}{\Lambda (u_{xx}(0,x) - \tilde{u}_{xx}(0,x))(\epsilon_h)_{xx}(0,x) \, dx}
\\ &-&
2 \int_{0}^{t}{\int_{0}^{L}{\Lambda (u_{txx}(s,x) - \tilde{u}_{txx}(s,x))(\epsilon_h)_{xx}(s,x) \, dx} \,ds}.
\\
\end{array} 
\end{equation}
Applying Chauchy-Schwarz to \eqref{integrated} yields:
\begin{equation} \label{Cauchy_schwarz_estimates}
\begin{array}{r c l}
 E(t ; \epsilon_h, \zeta^{e}_1, \zeta^{e}_2 )  &\le&    E(0; \epsilon_h(0), \zeta^{e}_1(0), \zeta^{e}_2(0) ) 
\\ &+\mu_{max}&
  \left( \|u_{tt} - \tilde{u}_{tt}\|^2_{L^2(0, T; L^2(0, L))} + \int_{0}^{t}{\|(\epsilon_h)_t(s,.)\|^2_{L^2(0,L)} \,ds} \right)
\\ &+\Lambda_{max}&
 \left( 8 \| u_{xx}(t,.) - \tilde{u}_{xx}(t,.)\|_{L^2(0, L)}^2 + \frac{1}{8} \|(\epsilon_h)_{xx}(t,.) \|_{L^2(0, L)}^2
\right. \\ &&+
8  \| u_{xx}(0,.) - \tilde{u}_{xx}(0,.)\|_{L^2(0, L)}^2 + \frac{1}{8} \|(\epsilon_h)_{xx}(0,.) \|_{L^2(0, L)}^2
\\ && +\left.
 \|u_{t} - \tilde{u}_{t}\|^2_{L^2(0, T; H^2(0, L))} + \int_{0}^{t}{\|(\epsilon_h)_{xx}(s,.)\|^2_{L^2(0,L)} \,ds}\right),
\end{array} 
\end{equation}
where $\mu_{max} = \max_{x \in [0, L]}{\mu(x)}$ and $\Lambda_{max} = \max_{x \in [0, L]}{\Lambda(x)}$.
Next, we use \eqref{Hermit_estimates} to obtain:
\begin{equation} \label{estimate_II}
\begin{array}{r c l}
 && \frac{3}{4} E(t ; \epsilon_h, \zeta^{e}_1, \zeta^{e}_2 )  \le  \frac{5}{4}  E(0; \epsilon_h(0), \zeta^{e}_1(0), \zeta^{e}_2(0) )  
+ 2 \int_{0}^{t}{ E(s ; \epsilon_h, \zeta^{e}_1, \zeta^{e}_2 ) \,ds}
\\ && + C h^4 \left(\| u \|_{C([0, T]; H^4(0,L))}^2 + \|u_{t} \|^2_{L^2(0, T; H^4(0, L))} + \|u_{tt} \|^2_{L^2(0, T; H^2(0, L))} \right). 
\end{array} 
\end{equation}
Gronwall inequality applied to \eqref{estimate_II} gives:
\begin{equation} \label{Gronwall_estimate}
\begin{array}{l}
 E(t ; \epsilon_h, \zeta^{e}_1, \zeta^{e}_2 ) \le  C \left( \vphantom{E^{\frac12}} E(0; \epsilon_h(0), \zeta_{1e}(0), \zeta_{2e}(0))\right.  
\\ + \left. \vphantom{E^{\frac12}} h^4  \left(\| u \|^2_{C([0, T]; H^4(0,L))} +
 \|u_{t} \|^2_{L^2(0, T; H^4(0, L))} + \|u_{tt} \|^2_{L^2(0, T; H^2(0, L))} \right) \right).
\end{array} 
\end{equation}
Finally, we have:
\begin{theorem}
Assuming (\ref{regularity_sol}), the following error estimate of the semidiscrete solution holds:
\begin{eqnarray} \label{semi_estimate}
 && E(t; u_h - u, \tilde\zeta_1- \zeta_1, \tilde\zeta_2 - \zeta_2)^{\frac12} 
 \le  C \left(E(0; \epsilon_h(0), \zeta_{1e}(0), \zeta_{2e}(0))^{\frac12} \right.  
\nonumber \\ &  & + \left.\vphantom{E^{\frac12}} h^2 \left( \| u_{tt}\|_{L^2(0,T; H^2(0, L))}  +  \| u_{t}\|_{L^2(0,T; H^4(0, L))} + 
\| u \|_{C([0,T]; H^4(0, L))} \right) \right),
\end{eqnarray}
$0 \le t \le T$.
\end{theorem}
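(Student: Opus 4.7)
The plan is to derive the theorem as an almost immediate corollary of the estimate \eqref{Gronwall_estimate} already established in the excerpt, by splitting the total error into the FEM error with respect to the nodal projection and the nodal projection error itself:
\[
u_h - u = \epsilon_h + (\tilde u - u), \qquad \tilde\zeta_i - \zeta_i = \zeta^{e}_i \quad (i=1,2).
\]
Because $E(t;\cdot,\cdot,\cdot)$ is a sum of squared (semi)norms of its arguments, the map $(w,\eta_1,\eta_2) \mapsto E(t;w,\eta_1,\eta_2)^{1/2}$ is a seminorm on the relevant tuple space, so the triangle inequality yields
\[
E(t;u_h-u,\tilde\zeta_1-\zeta_1,\tilde\zeta_2-\zeta_2)^{1/2}
\le E(t;\epsilon_h,\zeta^{e}_1,\zeta^{e}_2)^{1/2} + E(t;\tilde u - u,0,0)^{1/2}.
\]
The first term on the right is dominated by \eqref{Gronwall_estimate} after taking square roots and using $\sqrt{a+b}\le\sqrt{a}+\sqrt{b}$, which already produces the initial-data contribution and the $h^2$ terms involving $\|u\|_{C([0,T];H^4)}$, $\|u_t\|_{L^2(0,T;H^4)}$, $\|u_{tt}\|_{L^2(0,T;H^2)}$ appearing in the theorem.

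For the second term, I would exploit the defining property of the Hermite nodal projection $\tilde u$: it matches both $u$ and $u_x$ at every node $x_m$, in particular at $x_P = L$. Hence
\[
(\tilde u - u)(t,L) = 0,\quad (\tilde u - u)_x(t,L) = 0,
\]
and differentiating in $t$ (the interpolation coefficients are $u(t,x_m)$, $u_x(t,x_m)$) also
\[
(\tilde u_t - u_t)(t,L) = 0,\quad (\tilde u_t - u_t)_x(t,L) = 0.
\]
Consequently, all four boundary contributions in \eqref{energy_functional} drop out, leaving
\[
E(t;\tilde u - u,0,0) = \tfrac12\int_0^L \Lambda\,(u-\tilde u)_{xx}^2\,dx + \tfrac12\int_0^L \mu\,(u_t-\tilde u_t)^2\,dx.
\]
Bounding $\Lambda,\mu$ by their maxima and applying the first two lines of \eqref{Hermit_estimates} gives
\[
E(t;\tilde u - u,0,0)^{1/2} \le C\,h^{2}\bigl(\|u(t)\|_{H^4(0,L)} + \|u_t(t)\|_{H^4(0,L)}\bigr),
\]
which, together with $\|u(t)\|_{H^4}\le\|u\|_{C([0,T];H^4)}$, fits into the right-hand side of the theorem.

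The only minor obstacle is that the projection estimate produces the \emph{pointwise-in-$t$} quantity $\|u_t(t)\|_{H^4}$ whereas the theorem states only an $L^2$-in-time norm of $u_t$. I would resolve this either by noting that the assumed regularity \eqref{regularity_sol} together with \eqref{weak-form} gives enough continuity in time to interpret $\|u_t(t)\|_{H^4}$ (after modification on a null set) or, more cleanly, by absorbing the pointwise term into the constant $C$ via the natural continuous embedding of the solution regularity class; no genuine difficulty arises because the same powers $h^2$ are involved. Combining the two bounds and collecting constants then yields the stated estimate for all $t\in[0,T]$.
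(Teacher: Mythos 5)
Your decomposition $u_h - u = \epsilon_h + (\tilde u - u)$, the triangle inequality for $E^{1/2}$, and the combination of \eqref{Gronwall_estimate} with \eqref{Hermit_estimates} is exactly the paper's argument --- its entire proof is the sentence ``the result follows from \eqref{Hermit_estimates}, \eqref{Gronwall_estimate}, and the triangle inequality'' --- and your observation that the boundary terms of $E(t;\tilde u - u,0,0)$ vanish because the Hermite nodal interpolant matches $u$ and $u_x$ (and hence $u_t$ and $u_{tx}$) at the node $x_P=L$ correctly fills in a detail the paper leaves implicit.

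The one place where your write-up does not hold up as stated is your resolution of the pointwise-in-$t$ issue that you yourself flag. Under \eqref{regularity_sol} you only have $u_t\in L^2(0,T;\tilde H_0^4(0,L))$ and $u_{tt}\in L^2(0,T;\tilde H_0^2(0,L))$, which by Lemma \ref{Continuity_Lions} yields $u_t\in C([0,T];H^3(0,L))$ at best; there is no continuous embedding of this regularity class into $C([0,T];H^4(0,L))$, so $\|u_t(t)\|_{H^4(0,L)}$ at a fixed $t$ is neither defined for every $t$ nor controlled by the norms on the right-hand side of \eqref{semi_estimate}, and ``absorbing the pointwise term into the constant'' is not an available move. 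The clean repair is to avoid the fourth-order interpolation bound for this term altogether: the quantity entering $E(t;\tilde u - u,0,0)$ is only $\|u_t(t)-\tilde u_t(t)\|_{L^2(0,L)}$, and the cubic Hermite interpolant also satisfies the lower-order estimate $\|w-\tilde w\|_{L^2(0,L)}\le C h^2 \|w\|_{H^2(0,L)}$, so it suffices to control $\|u_t(t)\|_{H^2(0,L)}$ uniformly in $t$. This follows from $\|u_t\|_{C([0,T];H^3(0,L))}\le C\bigl(\|u_t\|_{L^2(0,T;H^4(0,L))}+\|u_{tt}\|_{L^2(0,T;H^2(0,L))}\bigr)$, and both of these norms already appear, multiplied by $h^2$, on the right-hand side of \eqref{semi_estimate}. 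With that substitution your proof is complete and coincides with the paper's.
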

\begin{proof}
The result follows from \eqref{Hermit_estimates}, \eqref{Gronwall_estimate}, and the triangle inequality.
\end{proof}

\subsection{Fully discrete scheme: time discretization}
For the numerical solution to the ODE \eqref{vector-system} we first write it as a first order system 
and then use the Crank-Nicolson scheme, which is crucial for the dissipativity of the scheme. 
To this end we introduce $v_h := (u_h)_t$, and $\mathbb{V}:=\mathbb{U}_t=[\:V_1\:\:V_2\:\:...\:\:V_N\:]^\top$ 
is its representation in the basis $\{w_j\}$.
The solution of the system \eqref{fem}, \eqref{zeta-tilde-ODE} is then the vector
$z_h=[\:u_h\;\:v_h\;\:\tilde\zeta_1\;\:\tilde\zeta_2\:]^\top$.
In contrast to \S\ref{s2}, here we do not have to include the boundary traces 
$v_h(L),\,(v_h)_x(L)$: In the finite dimensional case both $u_h$ and $v_h$ are in $\tilde H_0^2(0,L)$.
In analogy to \S\ref{s2}, the natural norm of $z_h = z_h(t)$ is defined as 
\begin{eqnarray}\label{discr-norm}
\| z_h \|^2
 &:=& \frac{1}{2} \int_{0}^{L}{\Lambda (u_h)_{xx}^2 \,dx} + \frac{1}{2} 
\int_{0}^{L}{\mu v_h^2 \,dx} +  \frac{M}{2} v_h^2(L) + \frac{J}{2} (v_h)_{x}^2(L)\\
&&+\frac{k_1}{2}(u_h)_x^2(L) +\frac{k_2}{2}u_h^2(L)
+\frac12 \tilde\zeta_1^\top P_1 \tilde\zeta_1 +\frac12 \tilde\zeta_2^\top P_2 \tilde\zeta_2.
\nonumber
\end{eqnarray}
Let $\Delta t$ denote the time step and 
\[ t_n = n \Delta t, \forall n \in \{0, 1, \dots, S \},\] 
is the discretization of the time interval $[0, T],\; T = S \Delta t$.
For the solution of the fully discrete scheme at $t = t_n$, we shall use the notation $z^n = [u^n \: v^n \: \zeta_1^n \: \zeta_2^n]^{\top}$. 
And $\mathbb{U}^n, \mathbb{V}^n$ are the basis representations (in $\{ w_j \}_{j=1}^{N}$) of $u^n$ and $v^n$, respectively.
Furthermore, let the vector $\mathbb{C}^n$ be defined by:  \[ (\mathbb{C}^n)_j :=  c_1 \cdot \zeta_1^n \; (w_j)_x(L) + 
 c_2 \cdot \zeta_2^n \; w_j(L), \quad j = 1, \dots, N.\]
The Crank-Nicolson scheme for \eqref{vector-system}, \eqref{zeta-tilde-ODE} then reads:
\begin{eqnarray} \label{CN__1}
\frac{\mathbb{U}^{n+1} - \mathbb{U}^{n}}{\Delta t} & = & \frac{1}{2}(
\mathbb{V}^{n+1} + \mathbb{V}^{n}), \\ \nonumber
\frac{{\mathbb{A}} \mathbb{V}^{n+1} - {\mathbb{A}} 
\mathbb{V}^{n}}{\Delta t} & = & - \frac{1}{2}({\mathbb{K}} 
\mathbb{U}^{n+1} + {\mathbb{K}} \mathbb{U}^{n})  - 
\frac{1}{2}(\mathbb{B} \mathbb{V}^{n+1} + \mathbb{B} 
\mathbb{V}^{n}) \\  
\label{CN__2} & & - \frac{1}{2} 
(\mathbb{C}^{n+1} + \mathbb{C}^{n}),\\ 
\label{CN__3} \frac{\zeta^{n+1}_1 - \zeta^{n}_1}{\Delta t}
 & = & A_1 \frac{\zeta^{n+1}_1 + \zeta^{n}_1}{2} 
+ b_1 \frac{v^{n+1}_x(L) + v^{n}_x(L)}{2}, \\ 
\label{CN__4} \frac{\zeta^{n+1}_2 - \zeta^{n}_2}{\Delta t} 
& = & A_2 \frac{\zeta^{n+1}_2 + \zeta^{n}_2}{2} + 
b_2 \frac{v^{n+1}(L) + v^{n}(L)}{2}.
\end{eqnarray}
In the chosen basis $\{w_j\}$, the last term of \eqref{CN__3}, \eqref{CN__4} reads
$\left(V_N^{n+1}+V_N^{n}\right)/2$ and $\left(V_{N-1}^{n+1}+V_{N-1}^{n}\right)/2$, respectively.
Next, we show that this scheme dissipates the norm. The somewhat lengthy proof is deferred to the Appendix B.

\begin{theorem} \label{fd_case}
For $n \in \mathbb{N}_0$ it holds for the norm from \eqref{discr-norm}:
\begin{eqnarray*}
\| z^{n+1} \|^2  & = & \| z^{n} \|^2 -
 \Delta t \left\{ \delta_1 \left( 
\frac{u^{n+1}_x(L)-u^{n}_x(L)}{\Delta t} \right)^2 \right.\\ 
& + & \frac{1}{2} \left( q_1\cdot \frac{\zeta_1^{n+1} + \zeta_1^{n}}{2} + 
\tilde\delta_1 \frac{u^{n+1}_x(L) - u^{n}_x(L)}{\Delta t} \right)^2 \\ 
& + &  \delta_2 \left( \frac{u^{n+1}(L)-u^{n}(L)}
{\Delta t} \right)^2 + \frac{1}{2} \left( q_2 \cdot
\frac{\zeta_2^{n+1} + \zeta_2^{n}}{2} + \tilde\delta_2 
\frac{u^{n+1}(L) - u^{n}(L)}{\Delta t} \right)^2 \\ 
& + & \left.
\frac{\epsilon_1}{2}  \frac{(\zeta_1^{n+1} + \zeta_1^{n})^{\top}}{2} P_1 
\frac{\zeta_1^{n+1} + \zeta_1^{n}}{2} + \frac{\epsilon_2}{2}  
\frac{(\zeta_2^{n+1} + \zeta_2^{n})^{\top}}{2} P_2 \frac{\zeta_2^{n+1} +
 \zeta_2^{n}}{2} \vphantom{ \frac{u^{n+1}_x(L)-u^{n}_x(L)}{\Delta t}^2} \right\}.
\end{eqnarray*}
\end{theorem}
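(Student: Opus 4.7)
The plan is to reproduce the continuous dissipation computation of Theorem \ref{th:semidiscrete-decay} at the discrete level, using the midpoint vectors
\[
\mathbb{V}^{m}:=\tfrac12(\mathbb{V}^{n+1}+\mathbb{V}^n),\qquad \zeta_j^{m}:=\tfrac12(\zeta_j^{n+1}+\zeta_j^n),\quad j=1,2,
\]
which are the natural test quantities for Crank--Nicolson. I would first multiply \eqref{CN__2} from the left by $(\mathbb{V}^m)^{\top}$. Because $\mathbb{A}$ is symmetric, the LHS telescopes to $\tfrac{1}{2\Delta t}\bigl[(\mathbb{V}^{n+1})^{\top}\mathbb{A}\mathbb{V}^{n+1}-(\mathbb{V}^n)^{\top}\mathbb{A}\mathbb{V}^n\bigr]$; similarly, inserting $\mathbb{V}^m=(\mathbb{U}^{n+1}-\mathbb{U}^n)/\Delta t$ from \eqref{CN__1} and using the symmetry of $\mathbb{K}$, the stiffness term on the RHS yields $\tfrac{1}{2\Delta t}\bigl[(\mathbb{U}^{n+1})^{\top}\mathbb{K}\mathbb{U}^{n+1}-(\mathbb{U}^n)^{\top}\mathbb{K}\mathbb{U}^n\bigr]$ with a minus sign. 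Multiplied by $\Delta t$ these two contributions together form precisely the beam-and-tip portion of $\|z^{n+1}\|^2-\|z^n\|^2$; the remaining $\mathbb{B}$- and $\mathbb{C}$-terms simplify to $-d_1(v_x^m(L))^2-d_2(v^m(L))^2-(c_1\cdot\zeta_1^m)v_x^m(L)-(c_2\cdot\zeta_2^m)v^m(L)$.

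Next I would multiply each of \eqref{CN__3}, \eqref{CN__4} from the left by $(\zeta_j^m)^{\top}P_j$. Symmetry of $P_j$ again telescopes the LHS into $\tfrac{1}{2\Delta t}\bigl[(\zeta_j^{n+1})^{\top}P_j\zeta_j^{n+1}-(\zeta_j^n)^{\top}P_j\zeta_j^n\bigr]$, producing (after multiplication by $\Delta t$) exactly the $\zeta_j$-portion of $\|z^{n+1}\|^2-\|z^n\|^2$. On the RHS I would invoke the KYP identities \eqref{kyp}: the first rewrites $(\zeta_j^m)^{\top}(P_jA_j+A_j^{\top}P_j)\zeta_j^m/2$ as $-\tfrac12(q_j\cdot\zeta_j^m)^2-\tfrac{\epsilon_j}{2}(\zeta_j^m)^{\top}P_j\zeta_j^m$, and the second replaces $(\zeta_j^m)^{\top}P_jb_j$ by $c_j\cdot\zeta_j^m-\tilde\delta_j(q_j\cdot\zeta_j^m)$. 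Adding the three tested identities, the $(c_j\cdot\zeta_j^m)$ cross-terms inherited from $\mathbb{C}^m$ cancel with matching signs against those produced by the KYP substitution.

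Finally, writing $d_j=\delta_j+(d_j-\delta_j)$ and using $\tilde\delta_j^{\,2}=2(d_j-\delta_j)$, the residual boundary and coupling terms assemble for each $j$ into the perfect-square form
\[
-\delta_j y_j^2-\tfrac12\bigl(q_j\cdot\zeta_j^m+\tilde\delta_j y_j\bigr)^2-\tfrac{\epsilon_j}{2}(\zeta_j^m)^{\top}P_j\zeta_j^m,
\]
with $y_1:=v_x^m(L)$ and $y_2:=v^m(L)$. The identity $\mathbb{V}^m=(\mathbb{U}^{n+1}-\mathbb{U}^n)/\Delta t$ evaluated at the two boundary degrees of freedom then converts $y_j$ into the divided differences $(u_x^{n+1}(L)-u_x^n(L))/\Delta t$ and $(u^{n+1}(L)-u^n(L))/\Delta t$ appearing in the theorem. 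The computation is almost entirely bookkeeping and follows the continuous proof line by line; the only genuine obstacle is the sign-consistent cancellation of the $c_j\cdot\zeta_j^m$ cross-terms and the verification that the perfect squares reconstitute precisely the terms written in the theorem.
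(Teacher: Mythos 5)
Your proposal is correct and is essentially the paper's own argument: the paper organizes the computation in the weak form, testing \eqref{aux2} with $u^{n+1}$ and with $u^n$ separately and multiplying \eqref{aux1} by $\mu(v^{n+1}-v^n)$, which after subtraction amounts exactly to your single midpoint test $(\mathbb{V}^{n+1}+\mathbb{V}^n)/2=(\mathbb{U}^{n+1}-\mathbb{U}^n)/\Delta t$. The telescoping via symmetry of $\mathbb{A}$, $\mathbb{K}$, $P_j$, the cancellation of the $c_j\cdot\zeta_j^m$ cross-terms through $c_j=P_jb_j+q_j\tilde\delta_j$, and the completion of squares using $\tilde\delta_j^{\,2}=2(d_j-\delta_j)$ all match the paper's proof step for step.
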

This decay of the norm is consistent (as $\Delta t\to0$) with the decay \eqref{norm-decay} 
for the continuous case, and with Theorem \ref{th:semidiscrete-decay}.
For the uncontrolled beam (i.e. $\Theta_1 = \Theta_2 = 0$), Theorem \ref{fd_case} shows that $\| z^n \|$ 
is constant in $n$. This motivates our choice of the Crank-Nicolson time discretization.
\begin{remark}
Note that the scheme \eqref{CN__1}--\eqref{CN__4} and the 
norm dissipation property from Theorem \ref{fd_case}
were written independently of the basis $\{w_j\}$. Hence, this decay property 
applies to any choice of the subspace $W_h\subset \tilde H_0^2(0,L)$. And the same remark applies 
to Theorem \ref{th:semidiscrete-decay}.
\end{remark}

\subsection{Error estimates: Fully discrete scheme}

In this subsection we shall need to assume additional regularity of the weak solutions $u$, $\zeta_1$ and $\zeta_2$, 
in order to estimate the error of the fully discrete case: Suppose that $u \in H^4(0,T;\tilde H_0^2(0,L))$ and
 $\zeta_1, \zeta_2 \in H^3(0, T;\mathbb{R}^n)$. Let us define $\breve u \in W_h$ 
to be the projection of the weak solution $u$, such that
$$a (\breve u(t), w_h) = a(u(t), w_h), \quad \quad \quad \forall w_h \in W_h,$$ $\forall t \in [0, T]$.
One easily verifies that it holds: $\breve u \in H^4(0, T; \tilde H^2_0(0, L))$, since the projection $u \mapsto \breve u$ 
is bounded in $\tilde H^2_0(0, L)$. Furthermore,  let $u^e := u - \breve u$ denote the error of the projection.
Assuming $u \in H^2(0, T ; \tilde H_0^4(0, L))$, we obtain the error estimates for $\breve u$ (cf. \cite{Strang:Fix}):
\begin{equation} \label{projection_estimates}
\begin{array}{r c l}
\| u^e \|_{H^2(0, L)} & \le & C h^2 \| u \|_{H^4(0, L)}, \\
\| u^e_t  \|_{H^2(0, L)} & \le & C h^2 \| u_t \|_{H^4(0, L)},\\
\| u^e_{tt} \|_{H^2(0, L)} & \le & C h^2 \| u_{tt} \|_{H^4(0, L)}.
\end{array}
\end{equation} 
Let $z(t_n) = [ u(t_n)\; v(t_n) \; \zeta_1(t_n) \; \zeta_2(t_n)]^{\top}$ and $z^n = [ u^n \; v^n \; \zeta_1^n \; \zeta_2^n]^{\top}$
denote the solution of the system and the solution of the fully discrete scheme at time $t = t_n$, respectively. 
Then we define the error by 
\begin{eqnarray*}
\epsilon^n & : = & u^n - \breve{u}(t_n), \\  \Phi^n & : =& v^n - \breve{u}_t(t_n), \\ \zeta_{e,i}^n & : = 
& \zeta_i^n - \zeta_i(t_n), \quad i =1, 2,
\end{eqnarray*} and  $z_e^n : = [ \epsilon^n \; \Phi^n \; \zeta^n_{e,1} \; \zeta^n_{e,2} ]^{\top},$ for every $n \in {0, 1, \dots, S}$.

We now give the second order error estimate (both in space and time) of the fully discrete scheme. The proof is deferred to Appendix B.
\begin{theorem} \label{error_full}
Assuming $u \in H^2(0, T; \tilde H_0^4(0, L)) \cap H^4(0, T; \tilde H_0^2(0, L))$ and \\ $\zeta_1, \zeta_2 \in H^3(0, T; \mathbb{R}^n)$, 
the following estimate holds:
\begin{eqnarray*}
\| z^n - z(t_{n})  \| & \le &  C \left[  \| z^0_e \| + h^2 \| u \|_{H^2(0, T ; H^4(0, L))}  + 
(\Delta t)^2 \left( \| u_{tt} \|_{L^2(0, T ; H^4(0, L))} \nonumber \right. \right. \\ &+& 
\left. \left. \| u_{tt} \|_{H^2(0, T ; H^2(0, L))} + \| (\zeta_{1})_{tt} \|_{H^1(0, T; \mathbb{R}^n)} + 
\| (\zeta_{2})_{tt} \|_{H^1(0, T; \mathbb{R}^n)} \right) \right]. 
\end{eqnarray*}
\end{theorem}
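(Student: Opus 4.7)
The plan is to decompose the error into a spatial Galerkin-projection part plus a discrete part, and then to bound the latter by replaying the energy identity of Theorem~\ref{fd_case} applied to the perturbed scheme that is obtained by substituting the projection of the exact solution.

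First, I would write $z^n-z(t_n)=z_e^n+[u(t_n)-\breve u(t_n),\,u_t(t_n)-\breve u_t(t_n),\,0,\,0]^\top$ and apply the triangle inequality. The second piece is bounded, via \eqref{projection_estimates} together with the continuous embedding $H^1(0,T;H^2)\hookrightarrow C([0,T];H^2)$, by $Ch^2\|u\|_{H^2(0,T;H^4(0,L))}$. The remaining task is therefore to control $\|z_e^n\|$.

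Next, inserting $[\breve u(t_n),\breve u_t(t_n),\zeta_1(t_n),\zeta_2(t_n)]^\top$ into \eqref{CN__1}--\eqref{CN__4} and subtracting the scheme satisfied by $z^n$ produces a Crank--Nicolson equation for $z_e^n$ of the same form with an additional residual forcing $R^n=R_{\mathrm{proj}}^n+R_{\mathrm{CN}}^n$. The projection part $R_{\mathrm{proj}}^n$ stems from the fact that $\breve u$ matches $u$ only in the bilinear form $a(\cdot,\cdot)$, so the mass-type contributions $\mu u_{tt}$, $Mu_{tt}(L)$, $Ju_{xtt}(L)$ reappear as residuals; \eqref{projection_estimates} combined with the 1D Sobolev trace gives $\sum_k\Delta t\,\|R_{\mathrm{proj}}^k\|^2\le Ch^4\|u\|_{H^2(0,T;H^4)}^2$. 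The Crank--Nicolson part $R_{\mathrm{CN}}^n$ comes from Taylor-expanding each difference quotient and midpoint average about $t_{n+1/2}$; writing the remainders in integral form and applying Cauchy--Schwarz yields
$$\sum_{k=0}^{S-1}\Delta t\,\|R_{\mathrm{CN}}^k\|^2\le C(\Delta t)^4\Bigl(\|u_{tt}\|_{L^2(0,T;H^4)}^2+\|u_{tt}\|_{H^2(0,T;H^2)}^2+\sum_{i=1,2}\|(\zeta_i)_{tt}\|_{H^1(0,T)}^2\Bigr),$$
where the $\mathbb{K}$-contribution (whose strong form involves fourth-order spatial derivatives) accounts for the $H^4$ regularity, the $\mathbb{A}$-contribution accounts for the $u_{tttt}$ (\emph{i.e.} $u_{tt}$ in $H^2(0,T)$), and the $\zeta$-equations \eqref{CN__3}--\eqref{CN__4} account for the $(\zeta_i)_{ttt}$ terms.

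Third, I would re-run the algebra that proves Theorem~\ref{fd_case} on the equation for $z_e^n$. Because the scheme is linear, the identity now reads
$$\|z_e^{n+1}\|^2-\|z_e^n\|^2+(\text{dissipation terms})=\Delta t\,\bigl\langle R^n,\Pi(z_e^{n+1},z_e^n)\bigr\rangle,$$
where $\Pi$ is the weighted symmetric pairing obtained from the test used in Theorem~\ref{fd_case}. Cauchy--Schwarz and Young's inequality split the right-hand side into a piece absorbable by $\|z_e^{n+1}\|^2+\|z_e^n\|^2$ and a piece bounded by $C\|R^n\|^2$. Telescoping in $n$ and invoking the discrete Gronwall lemma gives $\|z_e^n\|^2\le C\bigl(\|z_e^0\|^2+\sum_{k<n}\Delta t\,\|R^k\|^2\bigr)$. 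Inserting the bounds on $R_{\mathrm{proj}}$ and $R_{\mathrm{CN}}$ and taking square roots completes the estimate.

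The main obstacle I anticipate is the accounting for the boundary coupling. The residual $R^n$ mixes $L^2(0,L)$-interior terms with point evaluations at $x=L$ (weighted by $M$, $J$) and with $\mathbb{R}^n$-components from the controller ODEs, so the pairing $\Pi$ must be identified exactly with the weighted sum defining $\|z_e^n\|$ in \eqref{discr-norm}; otherwise the Young-inequality absorption produces uncontrollable traces. Closely related, the boundary contributions $(u_{tt}-\breve u_{tt})(t_n,L)$ and $(u_{xtt}-\breve u_{xtt})(t_n,L)$ in $R_{\mathrm{proj}}^n$ must be bounded via the 1D Sobolev embedding $H^2(0,L)\hookrightarrow C^1([0,L])$ applied to the $H^2$-control provided by \eqref{projection_estimates}, which is what ultimately fixes the high spatial regularity of $u$ demanded in the hypotheses.
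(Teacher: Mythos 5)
Your proposal follows essentially the same route as the paper's proof in Appendix B: the error is split into the elliptic-projection part (bounded by \eqref{projection_estimates}) and a discrete part $z_e^n$; the exact solution is inserted into the Crank--Nicolson scheme, producing Taylor/midpoint truncation residuals (the paper's $T_1^k,\dots,T_4^k$) and projection residuals (the paper's $G_1^k,G_2^k,G_3^k$, which also include the $d_j$-damping and $b_j$-controller boundary traces, not only the mass-type terms); then the energy identity of Theorem~\ref{fd_case} is replayed on the error equation with test function $\Delta t(\Phi^{k+1}+\Phi^k)/2$, followed by Cauchy--Schwarz, summation, and the discrete Gronwall lemma. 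Your identification of where each regularity hypothesis enters (integration by parts of the stiffness residual to reach $u_{tt}\in L^2(0,T;H^4)$, the $\mu$-, $M$-, $J$-weighted terms for $u_{tttt}$, the ODEs for $(\zeta_i)_{ttt}$, and the 1D Sobolev embedding for the boundary traces) matches the paper's argument.
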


\section{Numerical Simulation}\label{S-simul}
In this chapter we verify the dissipativity of our numerical scheme for an example with the following coefficients: 
 $\mu = \Lambda = L = 1$, $M = J= 0.1$, $k_1= k_2 = 0.01$, and $d_1= d_2 =0.02$.
\begin{figure}[h]
\begin{center}
\includegraphics[trim = 15mm 0mm 10mm 10mm, clip, scale=0.5]{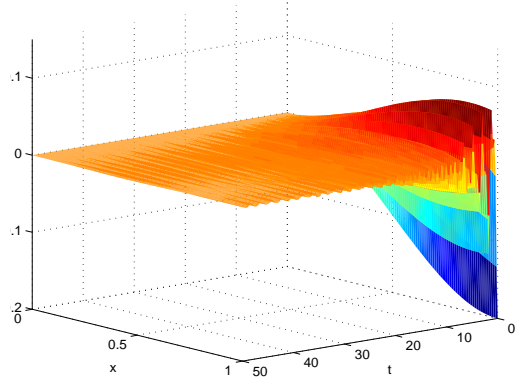}
\end{center}
\caption{Damped vibration of the beam: deflection $u(t,x)$}
\label{Deflection}
\end{figure}
We take $n=10$ as the dimension 
of controller variables. Thereby, $A_1 = A_2 = - I \in \mathbb{R}^{10 \times 10}$, where $I$ is the identity matrix, and 
$b_1 = b_2 = c_1 = c_2 = [1 \; 1 \; \dots \; 1 ]^{\top}  \in \mathbb{R}^{10}$.
 We take the time step $\Delta t =0.01$ and the spatial discretization step $h=0.01$.
Figure \ref{Deflection} shows the damped oscillations of the beam and its convergence
to the steady state $u \equiv 0$ on the time interval $[0, 50]$. Figure \ref{Norm_evolution} illustrates the 
(slower then exponential) energy dissipation of the coupled control system.
\begin{figure}[h]
\begin{center}
\includegraphics[trim = 0mm 0mm 0mm 0mm,scale=0.5]{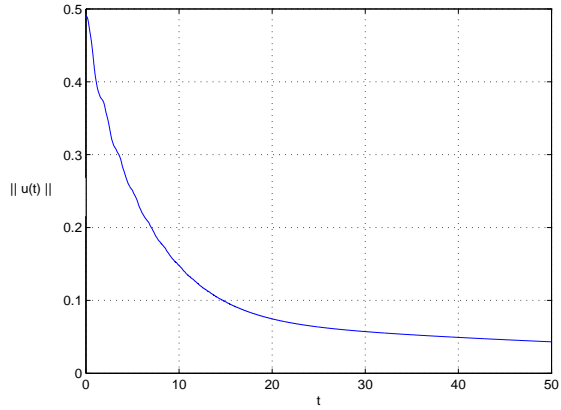}
\end{center}
\caption{Dissipativity of the norm (or ``energy''): $\| z(t) \|_{\mathcal{H}}$}
\label{Norm_evolution}
\end{figure}
Finally, we perform simulations for different time and space discretization steps to verify the order 
of convergence (o.o.c.) proved in \S \ref{S-FEM}. In Table \ref{ooc_table} we list the $l^2$-error norms of $z_e$.
\begin{table}[h] 
\caption{Experimental convergence rates}
\label{ooc_table}
\begin{minipage}[b]{0.45\linewidth}
\begin{center}
\begin{tabular}{  c  c  c  c } 
\toprule
$\Delta t$ & $h$ & $\| z_e \|_{l^2} $ & o.o.c. \\ \midrule

$10^{-2}$ & $\frac{1}{4}$ & $1.75 * 10^{-2}$ & $--$  \\ 

$10^{-2}$ & $\frac{1}{8}$ & $5.5 * 10^{-3}$ & $1.67$ \\

$10^{-2}$ & $\frac{1}{16}$ & $7.92 * 10^{-4}$ & $2.80$ \\ 

$10^{-2}$ & $\frac{1}{32}$ & $1.39* 10^{-4}$ & $2.51$ \\ 

$10^{-2}$ & $\frac{1}{64}$ & $3.38 * 10^{-5}$ & $2.04$ \\ 

$10^{-2}$ & $\frac{1}{128}$ & $8.24 * 10^{-6}$ & $2.04$ \\
 \bottomrule \\
 \end{tabular}
\end{center}
\end{minipage}
\hspace{0.5cm}
\begin{minipage}[b]{0.45\linewidth}
\begin{center}
\begin{tabular}{  c  c  c  c }
\toprule
 $\Delta t$ & $h$ & $\| z_e \|_{l^2} $ & o.o.c. \\ \midrule

$6.4 * 10^{-6}$ & $\frac{1}{50}$ & $2.58 * 10^{-6}$ & $--$ \\

$3.2 * 10^{-6}$ & $\frac{1}{50}$ & $6.87 * 10^{-7}$ & $1.91$ \\

$1.6 * 10^{-6}$ & $\frac{1}{50}$ & $1.73 * 10^{-7}$ & $1.99$ \\

$8 * 10^{-7}$ & $\frac{1}{50}$ & $4.27 * 10^{-8}$ & $2.02$ \\

$4 * 10^{-7}$ & $\frac{1}{50}$ & $1.02 * 10^{-8}$ & $2.07$ \\ 

$2 * 10^{-7}$ & $\frac{1}{50}$ & $2.03 * 10^{-9}$ & $2.32$ \\
\bottomrule \\
\end{tabular}
\end{center}
\end{minipage}
\end{table}
In the left table we see the  o.o.c. results for fixed $\Delta t=0.01$ and varying space discretization 
step $h$ on the time interval $[0, 1]$. In the right table the  o.o.c. results for different 
$\Delta t$ but $h=1/50$ fixed, on the time interval $[0, 0.00041]$, are presented.
\section{Appendix A}

The following proof is an adaption of the proof of Theorem 8.1 in \cite{Lions:Magenes},
 for the system studied here. It is included for the sake of completeness.
\begin{proof}[\textbf{Proof of Theorem \ref{existence_weak_sol}}] 
\textit{(a)--existence}:
Let $\{ \hat{w}_k \}_{k=1}^{\infty}$ be a sequence of functions that is an orthonormal basis for $H$, 
and an orthogonal basis for $V$. We introduce $W_m := span \{ \hat{w}_1, \dots, \hat{w}_m\}, \forall m \in \mathbb{N}$. 
Furthermore, let sequences 
$\hat{u}_{m0}, \hat{v}_{m0} \in W_m$ be given so that 
\begin{equation} \label{initial_convergence}
\begin{split}
 \hat{u}_{m0} \rightarrow \hat{u}_0 \text{ in } V, \\
 \hat{v}_{m0} \rightarrow \hat{v}_0 \text{ in } H. \\
\end{split}
\end{equation}

For a fixed $m \in \mathbb{N}$ we consider the Galerkin approximation 
$$\hat{u}_m(t)= ((u_m)_x(L),u_m(L), u_m) = \sum_{k=1}^{m}{d_m^k(t) \hat{w}_k},$$
with $d_m^k(t) \in \mathbb{R}$, which solves the formulation (\ref{motivation-form}) for all $\hat{w} \in W_m$:
\begin{equation} \label{Galerkin-form}
 ((\hat{u}_m)_{tt}, \hat{w})_H + a(\hat{u}_m, \hat{w}) + b((\hat{u}_m)_t, \hat{w}) 
+ e_1(\zeta_{1,m}, \hat{w}) + e_2(\zeta_{2,m}, \hat{w})= 0, 
\end{equation}and $\zeta_{1,m},\zeta_{2,m}$ solve the ODE system 
\begin{eqnarray} \label{discrete-zeta-ODE}
\begin{array}{rcl}
(\zeta_{1,m})_t(t)  & = & A_{1} \zeta_{1,m}(t) + 
b_{1}  \,^1(\hat{u}_m)_{t} (t), \\  
(\zeta_{2,m})_t(t)  & = & A_{2} \zeta_{2,m}(t) + b_{2} \,^2(\hat{u}_m)_{t} (t),
\end{array}
\end{eqnarray}
 with the initial conditions 
\begin{eqnarray*}
\hat{u}_m(0) & = & \hat{u}_{m0} , \\
(\hat{u}_m)_t(0) & = & \hat{v}_{m0} , \\
\zeta_{1,m}(0) & = & \zeta_{0,1} , \\
\zeta_{2,m}(0) & = & \zeta_{0,2}.
\end{eqnarray*}

This problem is a linear system of second order differential equations, with a unique solution satisfying
 $\hat{u}_m \in C^2([0, T]; V)$ and $\zeta_{1,m}, \zeta_{2,m} \in C^1([0,T];\mathbb{R}^n)$. 
Next, we define an energy functional, analogous to (\ref{energy_functional}), for the \textit{trajectory} $(\hat{u}, \zeta_1, \zeta_2)$:
\begin{eqnarray*}
   \hat{E}(t; \hat{u}, \zeta_1, \zeta_2) & := & \frac{1}{2} \| \hat{u}(t) \|^2_V + \frac{k_1}{2} (^1\hat{u}(t))^2 
     + \frac{k_2}{2} (^2\hat{u}(t))^2  + \frac12 \| \hat{u}_t(t) \|^2_H \\ &&
      + \frac12 \zeta_{1}^{\top}(t) P_1 \zeta_{1}(t) + \frac12 \zeta_{2}^{\top}(t) P_2 \zeta_{2}(t) \\
 & = & \| (u, u_t, \zeta_1, \zeta_2, J u_{tx}(J), M u_t(L)) \|_{\mathcal{H}}.
\end{eqnarray*}
Taking $\hat{w} = (\hat{u}_m)_t$ in (\ref{Galerkin-form}) and using the 
smoothness of $\hat{u}_m, \zeta_{1,m}, \zeta_{2,m}$, a 
straightforward calculation yields 
\begin{eqnarray} \label{norm-diff}
\frac{d}{dt} \hat{E}(t;\hat{u}_m,\zeta_{1,m}, \zeta_{2,m}) & = &  - 
\delta_1 (^1(\hat{u}_m)_{t})^2 -\frac{1}{2} \left(\zeta_{1,m} \cdot q_1 + 
\tilde\delta_1 (^1(\hat{u}_m)_{t}) \right)^2  \nonumber \\ 
&  & - \delta_2 (^2(\hat{u}_m)_{t})^2 - \frac{1}{2} \left(\zeta_{2,m} \cdot q_2 + 
\tilde\delta_2(^2(\hat{u}_m)_{t})\right)^2 \nonumber \\ &  & - \frac{\epsilon_1}{2} (\zeta_{1,m})^{\top} P_1 
\zeta_{1,m}- \frac{\epsilon_2}{2} (\zeta_{2,m})^{\top} P_2 \zeta_{2,m}\nonumber \\
&=:& F(t;\hat{u}_m, \zeta_{1,m}, \zeta_{2,m}) \le 0, 
\end{eqnarray}
which is analogous to \eqref{norm-decay} for the continuous solution. Hence 
\begin{equation*} \label{energy-bound}
\hat{E}(t;\hat{u}_m,\zeta_{1,m}, \zeta_{2,m}) \le \hat{E}(0;\hat{u}_m,\zeta_{0,1}, \zeta_{0,2}), \quad t \ge 0, 
\end{equation*}
which implies

\begin{eqnarray} \label{boundedness_gal_solutions}
&\{\hat{u}_m \}_{m \in \mathbb{N}}  & \text{ is bounded in } C([0,T]; V), \nonumber\\
&\{(\hat{u}_m)_t \}_{m \in \mathbb{N}} &  \text{ is bounded in } C([0,T]; H), \\
&\{\zeta_{1,m} \}_{m \in \mathbb{N}},  \{\zeta_{2,m} \}_{m \in \mathbb{N}} & \text{ are bounded in } C([0,T]; \mathbb{R}^n).
\nonumber \\ && \nonumber
\end{eqnarray}

Due to these boundedness results, it holds $\forall \hat{w} \in V$:
\begin{eqnarray*}
 |a(\hat{u}_m(t), \hat{w}) + b((\hat{u}_m)_t(t), \hat{w}) + e_1(\zeta_{1,m}(t), \hat{w}) 
+ e_2(\zeta_{2,m}(t), \hat{w})| &\le& D_1 \| \hat{w} \|_{V}, 
\end{eqnarray*}
a.e. on $(0, T)$, with some constant $D_1 >0$ which does not depend on $m$. 
Now, let $m \in \mathbb{N}$ be fixed. 
Furthermore, let $\hat{w} \in V$, and $\hat{w} = \hat{w}_1 + \hat{w}_2$, 
such that $\hat{w}_1 \in W_m$ and $\hat{w}_2$ orthogonal to $W_m$ in $H$. Then
we obtain from (\ref{Galerkin-form}):
\begin{eqnarray*}
 ((\hat{u}_m)_{tt}, \hat{w})_H & = & ((\hat{u}_m)_{tt}, \hat{w}_1)_H \\ 
& = & -a(\hat{u}_m, \hat{w}_1) - b((\hat{u}_m)_t, \hat{w}_1) - e_1(\zeta_{1,m}, \hat{w}_1) 
- e_2(\zeta_{2,m}, \hat{w}_1)\\ & \le & D_1 \|\hat{w}_1 \|_V \le D_1 \|\hat{w} \|_V.
\end{eqnarray*}
This implies that also $(\hat{u}_m)_{tt}$ is bounded in $L^2(0,T; V')$. 
Furthermore, from (\ref{discrete-zeta-ODE}) it trivially follows that 
$\{(\zeta_{1,m})_t \}_{m \in \mathbb{N}}$ and $(\{\zeta_{2,m})_t \}_{m \in \mathbb{N}}$  
are also bounded in $L^{2}(0,T; \mathbb{R}^n)$.

According to the Eberlein-\u{S}muljan Theorem, there  exist subsequences 
$\{ \hat{u}_{m_l}\}_{l \in \mathbb{N}}$, $\{ \zeta_{1,m_l}\}_{l \in \mathbb{N}}$,
$\{ \zeta_{2,m_l}\}_{l \in \mathbb{N}}$, and $\hat{u} \in L^{2}(0,T ; V)$, with 
$\hat{u}_t \in L^{2}(0,T ; H)$, $\hat{u}_{tt} \in L^{2}(0,T ; V')$, and 
$\zeta_1, \zeta_2 \in H^1(0,T;\mathbb{R}^n)$ such that 

\begin{eqnarray}
\label{weak_convergence}
&\{\hat{u}_{m_l} \} \rightharpoonup u  \text{ in }L^{2}(0,T; V), \nonumber\\
&\{(\hat{u}_{m_l})_t \} \rightharpoonup u_t \text{ in }L^{2}(0,T; H), \nonumber\\
&\{(\hat{u}_{m_l})_{tt} \} \rightharpoonup u_{tt} \text{ in } L^{2}(0,T; V'), \nonumber\\
&\{\zeta_{1,m_l} \} \rightharpoonup \zeta_1  \text{ in }  L^{2}(0,T; \mathbb{R}^n),\\
&\{\zeta_{2,m_l} \} \rightharpoonup \zeta_2  \text{ in } L^{2}(0,T;\mathbb{R}^n), \nonumber\\
&\{(\zeta_{1,m_l})_t \} \rightharpoonup (\zeta_1)_t \text{ in }  L^{2}(0,T; \mathbb{R}^n),\nonumber\\
&\{(\zeta_{2,m_l})_t \} \rightharpoonup (\zeta_2)_t   \text{ in } L^{2}(0,T;\mathbb{R}^n). \nonumber
\end{eqnarray}
Therefore, passing to the limit in (\ref{Galerkin-form}) and (\ref{discrete-zeta-ODE}), 
we see that $\hat{u}$ and $\zeta_1, \zeta_2$ solve 
(\ref{weak-form}) and (\ref{zeta-ODE}).
 
\textit{(b)}--additional regularity: {}From $\zeta_1, \zeta_2 \in H^1(0, T; \mathbb{R}^n)$ follows the continuity of the controller 
functions, i.e. (\ref{reg_zeta}). It is easily seen from the construction of the weak solution  and 
(\ref{boundedness_gal_solutions}) that $\hat{u}$ satisfies (\ref{reg_infty}). (\ref{reg_con_u}) follows immediately due to 
Lemma \ref{Continuity_Lions}, after, possibly, a modification on a set of measure zero. (\ref{reg_con_u_t}) follows from Lemma 
\ref{Continuity_Lions} and the 'Duality Theorem'
 (see \cite{Lions:Magenes}, Chapter 6.2, pp. 29) which states: for all $\theta \in (0, 1)$, it holds $$[X, Y]_{\theta}' = [Y', X']_{1 - \theta}.$$ 

\textit{(a)-initial conditions, uniqueness}: It remains to show that $\hat{u}$, $\zeta_1$, and $\zeta_2$ satisfy the initial conditions. For this purpose, 
we integrate by parts (in time) in (\ref{weak-form}), with $\hat{w} \in C^2([0, T]; V)$ such that $\hat{w}(T) = 0$ and $\hat{w}_t(T) = 0$:
\begin{equation} \label{init_solution}
\begin{split}
\int_{0}^{T}{
\left[ (\hat{u}, \hat{w}_{tt})_H + a(\hat{u}, \hat{w}) + b(\hat{u}_t, \hat{w}) + e_1(\zeta_1, \hat{w})
 + e_2(\zeta_2, \hat{w})\right] \, d \tau} 
= \\ -(\hat{u}(0), \hat{w}_t(0))_H + {_{V'}<\hat{u}_t(0), \hat{w}(0)>_{V}}.
\end{split}
\end{equation} 
Similarly, for a fixed $m$ it follows from (\ref{Galerkin-form}):
\begin{equation} \label{init_galerkin}
\begin{split}
\int_{0}^{T}{
\left[ (\hat{u}_m, \hat{w}_{tt})_H + a(\hat{u}_m, \hat{w}) + b((\hat{u}_m)_t, \hat{w}) +  e_1(\zeta_{1m}, \hat{w}) 
+  e_2(\zeta_{2m}, \hat{w}) \right] \, d \tau}=
\\ -(\hat{u}_{m0}, \hat{w}_t(0))_H + (\hat{v}_{m0}, \hat{w}(0))_H.
\end{split}
\end{equation}
Due to (\ref{initial_convergence}) and (\ref{weak_convergence}), 
passing to the limit in (\ref{init_galerkin}) along the convergent subsequence $\{\hat{u}_{m_l}\}$ gives
\begin{equation} \label{init_limit}
\begin{split}
\int_{0}^{T}{
\left[ (\hat{u}, \hat{w}_{tt})_H + a(\hat{u}, \hat{w}) + b(\hat{u}_t, \hat{w}) + e_1(\zeta_1, \hat{w}) 
+ e_2(\zeta_2, \hat{w}) \right] \, d \tau} = 
\\ -(\hat{u}_0, \hat{w}_t(0))_H + (\hat{v}_0, \hat{w}(0))_H.
\end{split}
\end{equation} 
Comparing (\ref{init_solution}) with (\ref{init_limit}), implies $\hat{u}(0) = \hat{u}_0$ and $\hat{u}_t(0) = \hat{v}_0$. 
Analogously we obtain $\zeta_1(0)=\zeta_{0,1}$ and $\zeta_2(0)=\zeta_{0,2}$.

In order to show uniqueness, let $(\hat{u}, \zeta_1, \zeta_2)$ be a solution to (\ref{weak-form})
and (\ref{zeta-ODE}) with zero initial conditions. Let $s \in (0, T)$ be fixed, and set
\begin{equation*}
\hat{U}(t):= \left\{
\begin{array}{c c l}
\int_{t}^{s}{\hat{u}(\tau) \,d\tau}, & & t < s, \\
0, & & t \ge s,
\end{array} \right.        
\end{equation*}
and
\begin{equation*}
Z_i(t):= 
\int_{0}^{t}{\zeta_i(\tau) \,d\tau},    
\end{equation*}
for $i = 1,2$. Integrating (\ref{zeta-ODE}) over $(0, t)$ yields with \eqref{kyp}
\begin{eqnarray} \label{uniq_1}
\frac12 \frac{d}{dt}(Z_i^{\top} P_i Z_i)(t) & = & - \frac12 \epsilon_i Z_i^{\top}(t) P_i Z_i(t) - 
\frac12 (q_i \cdot Z_i(t) + \tilde \delta_i (^i \hat{u}(t)))^2 \nonumber \\ & + &
(d_i - \delta_i)(^i \hat{u}(t))^2 + Z_i(t) \cdot c_i (^i \hat{u}(t)),
\end{eqnarray} for $0 \le t \le T$, $i = 1, 2$. Integrating (\ref{weak-form}) with $\hat{w} = \hat{U}$ over $[0, T]$,  and performing 
partial integration in time, yields
\begin{eqnarray} \label{uniq_2}
&& \int_{0}^{s}{(\hat{u}_{t}(\tau), \hat{u}(\tau))_{H} - a(\hat{U}_t(\tau), \hat{U}(\tau)) + b(\hat{u}(\tau), \hat{u}(\tau)) \,d\tau} 
\nonumber \\ && + \sum_{i = 1}^{2}  \int_{0}^{s}{Z_i(\tau) \cdot c_i (^i\hat{u}(\tau)) \,d\tau}= 0.
\end{eqnarray} 
{}From (\ref{uniq_1}) and (\ref{uniq_2}) follows 
\begin{eqnarray*} 
&& \int_{0}^{s}{\frac{d}{dt}\left(\frac12 \|\hat{u}(\tau)\|^2_{H} - \frac12 a(\hat{U}(\tau), \hat{U}(\tau))
+ \frac12 \sum_{i=1}^2{Z_i^{\top}(\tau) P_i Z_i(\tau)}\right)  \,d\tau} 
\nonumber \\ && = - \sum_{i = 1}^{2}  
 \int_{0}^{s}{\left( \delta_i (^i\hat{u}(\tau))^2 + \frac{\epsilon_i}{2} Z_i^{\top}(\tau)P_i Z_i(\tau) + 
\frac12 (q_i \cdot Z_i(\tau) + \tilde \delta_i (^i \hat{u})(\tau))^2\right)  \,d\tau}.
\end{eqnarray*}
Therefore, 
\begin{eqnarray*} 
\frac12 \|\hat{u}(s)\|^2_{H} + \frac12 a(\hat{U}(0), \hat{U}(0)) + \sum_{i=1}^2{\frac12 Z_i^{\top}(s) P_i Z_i(s)} & \le & 0.
\end{eqnarray*}
The matrices $P_j, j=1,2$ are positive definite, and the bilinear form $a(.,.)$ is coercive. Hence 
$\hat{u}(s) = 0$, $\hat{U}(0) = 0$, and $Z_i(s) = 0$. Since $s \in (0, T)$ was arbitrary, 
$\hat u \equiv 0$, $\zeta_i \equiv 0, \;i = 1, 2$ follows.
\end{proof}


Before the proof of the continuity in time of the weak solution, a definition and a lemma 
will be stated.
\begin{definition}
Let $Y$ be a Banach space. Then
\begin{eqnarray*}
 C_w([0, T]; Y) & := & \{ w \in L^{\infty}(0,T;Y) \colon \forall f \in Y' \\ & & t \mapsto (f,w(t)) 
\text{ is continuous on } [0, T]\}.
\end{eqnarray*} 
denotes the space of \emph{weakly continuous functions} with values in $Y$.
\end{definition}
The following Lemma was stated and proven in \cite{Lions:Magenes} (Chapter 8.4, pp. 275).
\begin{lemma} \label{weak_continuity}
Let $X$, $Y$ be Banach spaces, $X \subset Y$ with continuous injection, $X$ reflexive. Then
$$L^{\infty}(0, T; X) \cap C_w(0, T; Y) = C_w(0, T; X).$$
\end{lemma}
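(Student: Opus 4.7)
The plan is to establish only the nontrivial inclusion $L^{\infty}(0,T;X) \cap C_w(0,T;Y) \subset C_w(0,T;X)$, since the reverse inclusion is immediate: a function $u \in C_w(0,T;X)$ has $t \mapsto \langle f, u(t)\rangle$ continuous for every $f \in X'$, so by the uniform boundedness principle $\|u(t)\|_X$ is bounded on the compact interval $[0,T]$, and every $g \in Y'$ restricts via the continuous embedding $X \hookrightarrow Y$ to an element of $X'$, yielding weak continuity in $Y$.

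For the nontrivial direction, let $u \in L^{\infty}(0,T;X) \cap C_w(0,T;Y)$ and set $M := \|u\|_{L^{\infty}(0,T;X)}$. First I would show that, after possibly modifying $u$ on a null set, one has $u(t) \in X$ with $\|u(t)\|_X \le M$ for \emph{every} $t \in [0,T]$. Given $t_0 \in [0,T]$, pick an approximating sequence $t_n \to t_0$ with $u(t_n) \in X$ and $\|u(t_n)\|_X \le M$. By reflexivity of $X$ and the Eberlein--\v{S}muljan theorem, a subsequence $u(t_{n_k})$ converges weakly in $X$ to some $w \in X$ with $\|w\|_X \le M$. Because $X \hookrightarrow Y$ continuously, every $g \in Y'$ restricts to an element of $X'$, so weak $X$-convergence of $u(t_{n_k})$ to $w$ implies weak $Y$-convergence to $w$. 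On the other hand, weak continuity of $u$ in $Y$ gives $u(t_{n_k}) \rightharpoonup u(t_0)$ in $Y$, whence $w = u(t_0) \in X$, which also defines the pointwise modification on any exceptional null set.

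Next I would verify weak continuity of the modified $u$ into $X$ at an arbitrary $t_0 \in [0,T]$. Fix $f \in X'$ and an arbitrary sequence $t_n \to t_0$; the scalar sequence $\langle f, u(t_n)\rangle$ is bounded by $M\|f\|_{X'}$, so to show it converges to $\langle f, u(t_0)\rangle$ it is enough to check that every convergent subsequence has that same limit. Given such a subsequence I extract a further subsequence along which $u(t_{n_k}) \rightharpoonup w^*$ weakly in $X$ (using reflexivity plus the uniform bound already established), and I identify $w^* = u(t_0)$ by exactly the embedding-plus-weak-$Y$-continuity argument from the previous step. A standard subsequence-of-subsequence argument in $\mathbb{R}$ then concludes that $\langle f, u(t_n)\rangle \to \langle f, u(t_0)\rangle$, and since $f \in X'$ was arbitrary, $u \in C_w(0,T;X)$.

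The main obstacle is the pointwise identification step: the $L^{\infty}$ hypothesis gives values in $X$ only almost everywhere, whereas $C_w(0,T;Y)$ requires a pointwise well-defined function, so the two hypotheses must be reconciled on the null set. The mechanism that makes this work is precisely reflexivity of $X$, which produces weakly convergent subsequences from the uniform bound, coupled with the continuous embedding into $Y$, which lets the weak $Y$-continuity pin down the limit. Without reflexivity the bounded sequence in $X$ need not possess a weakly convergent subsequence, and the identification $w = u(t_0)$ would break down; the remainder of the argument is routine.
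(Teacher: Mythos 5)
Your argument is correct, and it is essentially the standard proof of this lemma: the paper itself gives no proof but refers to Lions--Magenes (Ch.~3, \S 8.4), where the same mechanism is used --- the a.e.\ bound in $X$ plus reflexivity yields weakly convergent subsequences, and the weak continuity in $Y$ together with the continuous injection $X\hookrightarrow Y$ identifies their limit as $u(t_0)$, first to show $u(t)\in X$ everywhere and then to upgrade to weak continuity in $X$ by the subsequence-of-subsequences argument. Your handling of the reconciliation between the a.e.-defined $L^\infty(0,T;X)$ class and the pointwise weakly-$Y$-continuous representative is exactly the right point to single out, so there is nothing to correct.
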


\begin{proof}[\textbf{Proof of Theorem \ref{continuity_weak_sol}}]
This proof is an adaption of standard strategies to the situation at hand (cf. \S8.4 in \cite{Lions:Magenes} and \S2.4 in \cite{Temam}).
Using Lemma \ref{weak_continuity} with $X = V$, $Y = H$,  we conclude from (\ref{reg_infty}), (\ref{reg_con_u}) that $\hat{u} \in C_w([0, T]; V)$.
Similarly, (\ref{reg_infty}) and (\ref{reg_con_u_t}) imply $\hat{u}_t \in C_w([0, T]; H)$.

Next, we take the scalar cut-off function $O_{I} \in C^{\infty}(\mathbb{R})$ such that it equals one on some interval $I \subset \subset [0, T]$,
and zero on $\mathbb{R} \setminus [0, T]$. 
Then the functions $O_{I} \hat{u} : \mathbb{R} \rightarrow V$ and $O_{I} \zeta_1,O_{I} \zeta_2  : \mathbb{R} \rightarrow \mathbb{R}^n$ 
are compactly supported.
Let $\eta^{\epsilon} : \mathbb{R} \rightarrow \mathbb{R}$ be a standard mollifier in time. Then we define
\begin{eqnarray*}
 \hat{u}^{\epsilon} &:=& \eta^{\epsilon} \ast O_{I} \hat{u} \in C^{\infty}_c(\mathbb{R}, V),\\
\zeta_1^{\epsilon} &:=& \eta^{\epsilon} \ast O_{I} \zeta_1 \in C^{\infty}_c(\mathbb{R}, \mathbb{R}^n),\\
\zeta_2^{\epsilon} &:=& \eta^{\epsilon} \ast O_{I} \zeta_2 \in C^{\infty}_c(\mathbb{R}, \mathbb{R}^n).
\end{eqnarray*}
Now $\zeta_1^{\epsilon}$ and $\zeta_2^{\epsilon}$ converge uniformly on $I$ to $\zeta_1$ and $\zeta_2$, respectively.
Moreover, $\hat{u}^{\epsilon}$ converges to $\hat{u}$ in $V$, and $\hat{u}^{\epsilon}_t$ to $\hat{u}_t$ in $H$ a.e. on $I$.
Then, $\hat{E}(t;\hat{u}^{\epsilon},\zeta_1^{\epsilon}, \zeta_2^{\epsilon})$ converges to $\hat{E}(t;\hat{u},\zeta_1, \zeta_2)$ a.e. on $I$ as well. 
Since $\hat{u}^{\epsilon}, \zeta_1^{\epsilon}, \zeta_2^{\epsilon}$ are smooth, a straightforward calculation on $I$ yields
\begin{eqnarray} \label{epsilon_energy}
\frac{d}{dt} \hat{E}(t;\hat{u}^{\epsilon},\zeta_1^{\epsilon}, \zeta_2^{\epsilon}) & = & F(t;\hat{u}^{\epsilon},\zeta_1^{\epsilon}, \zeta_2^{\epsilon}),
\end{eqnarray}
with $F$ defined in \eqref{norm-diff}. Passing to the limit in (\ref{epsilon_energy}) as $\epsilon \rightarrow 0$ 
\begin{eqnarray} \label{limit_epsilon_energy}
\frac{d}{dt} \hat{E}(t; \hat{u}, \zeta_1, \zeta_2) & = & F(t;\hat{u},\zeta_1, \zeta_2)
\end{eqnarray} holds in the sense of distributions on $I$. Since $I$ was arbitrary, (\ref{limit_epsilon_energy}) holds on all compact subintervals of $(0, T)$. 
Now $t \mapsto \hat{E}(t; \hat{u}, \zeta_1, \zeta_2)$ is an integral of an  $L^1$-function (note that the input 
functions of $F$ satisfy: $^1\hat{u}_t, ^2\hat{u}_t \in L^2(0, T)$), so it is absolutely continuous.

For a fixed $t$, let $\lim_{n \rightarrow +\infty}{t_n} = t$ and let the sequence $\chi_n$
be defined by 
\begin{eqnarray*}
   \chi_n & := & \frac{1}{2} \| \hat{u}(t) - \hat{u}(t_n)\|^2_V + \frac12 \| \hat{u}_t(t) - \hat{u}_t(t_n)\|^2_H \\ &&
     + \frac{k_1}{2} (^1\hat{u}(t)-\,^1\hat{u}(t_n))^2 + \frac{k_2}{2} (^2\hat{u}(t)-\,^2\hat{u}(t_n))^2  \\ &&
      + \frac12 (\zeta_{1}(t)-\zeta_{1}(t_n))^{\top} P_1 (\zeta_{1}(t) - \zeta_{1}(t_n)) \\ && + 
       \frac12 (\zeta_{2}(t)-\zeta_{2}(t_n))^{\top} P_2 (\zeta_{2}(t) - \zeta_{2}(t_n)).
\end{eqnarray*}
Then
\begin{eqnarray*}
   \chi_n & = & \hat{E}(t;\hat{u}, \zeta_1, \zeta_2) + \hat{E}(t_n;\hat{u}, \zeta_1, \zeta_2) - 
 (\hat{u}(t), \hat{u}(t_n))_V - (\hat{u}_t(t), \hat{u}_t(t_n))_H\\ &&
     - k_1 \,^1\hat{u}(t) ^1\hat{u}(t_n) - k_2 \,^2\hat{u}(t) ^2\hat{u}(t_n) 
      - \zeta_{1}(t)^{\top} P_1 \zeta_{1}(t_n) - \zeta_{2}(t) ^{\top} P_2 \zeta_{2}(t_n).
\end{eqnarray*}
Due to the $t$-continuity of the energy function, weak continuity of $\hat{u}, \hat{u}_t$, and continuity of $\zeta_1, \zeta_2$, it follows
$$\lim_{n \rightarrow +\infty}{\chi_n} = 0.$$ 
Finally, it follows that 
\begin{eqnarray*}
\lim_{n \rightarrow \infty}{ \| \hat{u}_t(t) - \hat{u}_t(t_n) \|^2_H} & = &0 , \\
\lim_{n \rightarrow \infty}{ \| \hat{u}(t) - \hat{u}(t_n)\|^2_V} &= &0,
\end{eqnarray*}
which proves the theorem.
\end{proof}

\section{Appendix B}

\begin{proof}[\textbf{Proof of Theorem \ref{fd_case}}]
First we obtain from (\ref{CN__1}) and (\ref{CN__2}) (written in the style of \eqref{motivation-form}):
\begin{eqnarray} 
& & \frac{u^{n+1} - u^{n}}{\Delta t}  = 
\frac{v^{n+1} + v^{n}}{2}, \label{aux1} 
\quad\quad\quad\quad\qquad  \\ [5mm]
& & \int^{L}_{0}
{\mu \frac{v^{n+1} - v^{n}}{\Delta t} w_h \,dx} +  
 \int^{L}_{0}{\Lambda \frac{u^{n+1}_{xx} + u^{n}_{xx}}{2} 
(w_{h})_{xx} \,dx} \nonumber \\ 
& & + M \frac{v^{n+1}(L) - v^{n}(L)}{\Delta t} w_h(L)  
+  J  \frac{v^{n+1}_{x} (L) - v^{n}_{x} (L)}{\Delta t}
 (w_h)_{x}(L) \nonumber \\ 
& & + k_1 \frac{u^{n+1}_x(L) +
 u^{n}_x(L) }{2} (w_h)_x(L) + k_2 \frac{u^{n+1}(L) +
 u^{n}(L) }{2} w_h(L) \label{aux2} \\ 
\nonumber & & + d_1 \frac{v^{n+1}_x(L) +
 v^{n}_x(L) }{2} (w_h)_x(L) + d_2 \frac{v^{n+1}(L) +
 v^{n}(L) }{2} w_h(L) \\ 
& & +  c_1 \cdot\frac{\zeta_1^{n+1} + \zeta_1^{n}}{2} (w_h)_x(L) 
+ c_2 \cdot\frac{\zeta_2^{n+1} + \zeta_2^{n}}{2} w_h(L) = 0,
\qquad\forall w_h \in W_h.\nonumber 
\end{eqnarray}
Next we multiply (\ref{aux1}) by  $\mu (v^{n+1} - v^n)$, and integrate over $[0,L]$ to obtain
$$\frac{1}{2} \int_{0}^{L}{\mu \left[(v^{n+1})^2 - (v^{n})^2 \right]\,dx} = 
 \int_{0}^{L}{\mu \frac{u^{n+1} - u^{n}}{\Delta t} (v^{n+1} - v^{n}) \,dx},
$$
and $w_h = u^{n+1}$ in (\ref{aux2}):
\begin{eqnarray*}
&&\frac{1}{2} \int_{0}^{L}{\Lambda(u^{n+1}_{xx})^2 \,dx} = 
- \frac{1}{2} \int_{0}^{L}{\Lambda u^{n+1}_{xx} u^{n}_{xx} \,dx} 
-  \int_{0}^{L}{\mu \frac{v^{n+1} - v^{n}}{\Delta t} u^{n+1} \,dx} \\ 
& &\qquad\qquad - M \frac{v^{n+1}(L) - v^{n}(L)}{\Delta t} u^{n+1}(L) 
- J \frac{v^{n+1}_x(L) - v^{n}_x(L)}{\Delta t} u^{n+1}_x(L)\\ 
 & &\qquad\qquad - k_1 \frac{u^{n+1}_x(L) + u^{n}_x(L)}{2}  u^{n+1}_x(L) 
- k_2 \frac{u^{n+1}(L) + u^{n}(L)}{2} u^{n+1}(L) \\
  & &\qquad\qquad -d_1 \frac{v^{n+1}_x(L) + v^{n}_x(L)}{2} u^{n+1}_x(L) 
- d_2 \frac{v^{n+1}(L) + v^{n}(L)}{2} u^{n+1}(L) \\
& &\qquad\qquad  - c_1 \cdot\frac{\zeta_1^{n+1} + \zeta_1^{n}}{2} u^{n+1}_x(L) 
-  c_2 \cdot\frac{\zeta_2^{n+1} + \zeta_2^{n}}{2} u^{n+1}(L).
\end{eqnarray*}
We next set $w_h=u^n$ in 
(\ref{aux2}):
\begin{eqnarray*}
&&\frac{1}{2} \int_{0}^{L}{\Lambda(u^{n}_{xx})^2 \,dx} =
- \frac{1}{2} \int_{0}^{L}{\Lambda u^{n+1}_{xx} u^{n}_{xx} \,dx} 
-  \int_{0}^{L}{\mu \frac{v^{n+1} - v^{n}}{\Delta t} u^{n}\,dx} \\
&&\qquad\qquad - M \frac{v^{n+1}(L) - v^{n}(L)}{\Delta t} u^{n}(L) 
-J \frac{v^{n+1}_x(L) - v^{n}_x(L)}{\Delta t} u^{n}_x(L) \\  
&&\qquad\qquad - k_1 \frac{u^{n+1}_x(L) + u^{n}_x(L)}{2} 
 u^{n}_x(L) - k_2 \frac{u^{n+1}(L) + u^{n}(L)}{2} u^{n}(L) \\  
&&\qquad\qquad - d_1 \frac{v^{n+1}_x(L) + v^{n}_x(L)}{2} u^{n}_x(L) 
- d_2 \frac{v^{n+1}(L) + v^{n}(L)}{2} u^{n}(L)\\
&&\qquad\qquad - c_1 \cdot\frac{\zeta_1^{n+1} + \zeta_1^{n}}{2} u^{n}_x(L) -
 c_2 \cdot\frac{\zeta_2^{n+1} + \zeta_2^{n}}{2} u^{n}(L) .
\end{eqnarray*}

This yields for the norm of the time-discrete solution, as defined in \eqref{discr-norm}:
\begin{eqnarray*}
&& \!\!\!\!\!\!\!\!\!\!\!\!\!\!\! \| z^{n+1} \|^2 - \| z^{n} \|^2 \\ 
& = & M \left( - \frac{v^{n+1}(L)
 - v^{n}(L)}{\Delta t} (u^{n+1}(L) - u^{n}(L)) + 
\frac{v^{n+1}(L)^2 - v^{n}(L)^2}{2} \right) \\ 
& + & J \left(- \frac{v^{n+1}_x(L) - v^{n}_x(L)}{\Delta t} (u^{n+1}_x(L) 
- u^{n}_x(L))  + \frac{v^{n+1}_x(L)^2 - v^{n}_x(L)^2}{2} \right) \\ 
& + & \frac{k_1}{2} \left(- \left(u^{n+1}_x(L) + 
u^{n}_x(L)\right) (u^{n+1}_x(L) - u^{n}_x(L))  + 
u^{n+1}_x(L)^2 - u^{n}_x(L)^2\right) \\ 
& + & \frac{k_2}{2} 
\left( - \left(u^{n+1}(L) + u^{n}(L)\right) (u^{n+1}(L) - 
u^{n}(L)) + u^{n+1}(L)^2 - u^{n}(L)^2 \right)  \\ 
& - & \frac{d_1}{2}  (v^{n+1}_x(L) + v^{n}_x(L)) 
(u^{n+1}_x(L)-u^{n}_x(L))\\ & - & \frac{d_2}{2} 
(v^{n+1}(L) + v^{n}(L))(u^{n+1}(L) - u^{n}(L)) \\ 
& -&  \frac{1}{2} c_1 \cdot(\zeta_1^{n+1} + \zeta_1^{n})(u^{n+1}_x(L) -
 u^{n}_x(L)) + \frac{1}{2}(\zeta_1^{n+1})^{\top} P_1 \zeta_1^{n+1} - 
\frac{1}{2}(\zeta_1^{n})^{\top} P_1 \zeta_1^{n} \\ 
&-&   \frac{1}{2} c_2 \cdot(\zeta_2^{n+1} + \zeta_2^{n})(u^{n+1}(L) - u^{n}(L)) + 
\frac{1}{2}(\zeta_2^{n+1})^{\top} P_2 \zeta_2^{n+1} - \frac{1}{2}(\zeta_2^{n})
^{\top} P_2 \zeta_2^{n} .
\end{eqnarray*}
For the first six lines we use (\ref{CN__1}), and for the rest $c_j=P_jb_j+q_j\tilde\delta_j$ (cf.\ (\ref{kyp})) to obtain:
\begin{eqnarray} \label{fully_discrete_norm_I}
 \| z^{n+1} \|^2  & = & \| z^{n} \|^2  -  
\frac{d_1}{\Delta t} \left(u^{n+1}_x(L)-u^{n}_x(L)\right)^2 
-  \frac{d_2}{\Delta t}(u^{n+1}(L) - u^{n}(L))^2 \nonumber \\& - &
 \frac{\left( \zeta_1^{n+1} + \zeta_1^{n} \right)^{\top}}{2} (P_1 b_1 + q_1 
\tilde\delta_1)(u^{n+1}_x(L) - u^{n}_x(L)) \nonumber \\& - &
 \frac{\left( \zeta_2^{n+1} + \zeta_2^{n} \right)^{\top}}{2} (P_2 b_2 + q_2 
\tilde\delta_2)(u^{n+1}(L) - u^{n}(L)) \nonumber \\ & + & \frac{1}{2} 
(\zeta_1^{n+1})^{\top} P_1 \zeta_1^{n+1} - \frac{1}{2} (\zeta_1^{n})^{\top}
 P_1 \zeta_1^{n} + \frac{1}{2} (\zeta_2^{n+1})^{\top} P_2 \zeta_2^{n+1} -
 \frac{1}{2} (\zeta_2^{n})^{\top} P_2 \zeta_2^{n}.\nonumber\\
\end{eqnarray}
For the second and the third line of \eqref{fully_discrete_norm_I} 
we now use (\ref{CN__1}), (\ref{CN__3}), and (\ref{CN__4}) from the Crank-Nicholson scheme:
\begin{eqnarray*}
 \| z^{n+1} \|^2 & = & \| z^{n} \|^2  -  
\frac{d_1}{\Delta t} 
\left(u^{n+1}_x(L)-u^{n}_x(L)\right)^2 -  
\frac{d_2}{\Delta t}(u^{n+1}(L) - u^{n}(L))^2 \\
& - & \frac{\left( \zeta_1^{n+1} + \zeta_1^{n} \right)^{\top}}{2}
 P_1 \left(\zeta_1^{n+1} - \zeta_1^{n} - \Delta t \; A_1 \frac{\zeta_1^n
 + \zeta_1^{n+1}}{2}\right) \\& - & \frac{\left( \zeta_1^{n+1} + \zeta_1^{n}
 \right)}{2} \cdot q_1 \tilde\delta_1 (u^{n+1}_x(L) - u^{n}_x(L)) \\
& - & \frac{\left( \zeta_2^{n+1} + \zeta_2^{n} \right)
^{\top}}{2} P_2 \left(\zeta_2^{n+1} - \zeta_2^{n} - \Delta t \; A_2 
\frac{\zeta_2^{n+1} + \zeta_2^{n}}{2} \right) \\& - & \frac{\left( 
\zeta_2^{n+1} + \zeta_2^{n} \right)}{2} \cdot q_2 \tilde\delta_2
(u^{n+1}(L) - u^{n}(L)) \\ & + & \frac{1}{2} (\zeta_1^{n+1})^
{\top} P_1 \zeta_1^{n+1} - \frac{1}{2} (\zeta_1^{n})^{\top} P_1 \zeta_1^{n} 
 + \frac{1}{2} (\zeta_2^{n+1})^{\top} P_2 \zeta_2^{n+1} - \frac{1}{2} 
(\zeta_2^{n})^{\top} P_2 \zeta_2^{n}.
\end{eqnarray*}
Since $P_j, \,j=1,2$ are symmetric matrices, this yields
\begin{eqnarray*}
  \| z^{n+1} \|^2 & = & \| z^{n} \|^2  -  
\frac{d_1}{\Delta t} 
\left(u^{n+1}_x(L)-u^{n}_x(L)\right)^2 - 
 \frac{d_2}{\Delta t}(u^{n+1}(L) - u^{n}(L))^2 \\
& + & \Delta t\frac{\left( \zeta_1^{n+1} + \zeta_1^{n} \right)^{\top}}{2}
 P_1  A_1 \frac{\zeta_1^n + \zeta_1^{n+1}}{2} \\ & - & \frac{\left( 
\zeta_1^{n+1}
 + \zeta_1^{n} \right)}{2} \cdot q_1 \tilde\delta_1
(u^{n+1}_x(L) - u^{n}_x(L)) \\
& + & \Delta t 
\frac{\left( \zeta_2^{n+1} + \zeta_2^{n} \right)^{\top}}{2} P_2 
A_2 \frac{\zeta_2^{n+1} + \zeta_2^{n}}{2} \\ 
& - & \frac{\left( \zeta_2^{n+1}
 + \zeta_2^{n} \right)}{2} \cdot q_2 \tilde\delta_2 (u^{n+1}(L) - u^{n}(L)) ,
\end{eqnarray*} 
which is the claimed result (by using (\ref{kyp})).
\end{proof}
\begin{proof}[\textbf{Proof of Theorem \ref{error_full}}]
Let $k \in \{0, 1, \dots, S  \}$ be arbitrary. Taylor's Theorem yields
  $\forall x \in [0, L]$:
\begin{eqnarray} \label{first_expansion}
\frac{\breve{u}(t_{k+1},x)-\breve{u}(t_{k},x)}{\Delta t} & = & \frac{\breve{u}_{t}(t_{k+1},x) 
+ \breve{u}_{t}(t_{k},x)}{2} + \Delta t \, T^k_1(x),
\end{eqnarray}
where
\begin{eqnarray*} 
T^k_1 (x) & = & \int_{t_{k + \frac{1}{2}}}^{t_{k+1}}{\frac{\breve u_{ttt}(t, x)}{2 \left( \Delta t \right)^2} 
(t_{k+1} - t)^2 \;dt} +
\int_{t_{k}}^{t_{k+\frac12}}{\frac{\breve u_{ttt}(t, x)}{2 \left( \Delta t \right)^2} (t_{k} - t)^2 \;dt} \\
& - & \int_{t_{k + \frac{1}{2}}}^{t_{k+1}}{\frac{\breve u_{ttt}(t, x)}{2 \Delta t} (t_{k+1} - t) \;dt} +
\int_{t_{k}}^{t_{k+\frac12}}{\frac{\breve u_{ttt}(t, x)}{2 \Delta t} (t_{k} - t)\;dt}.
\end{eqnarray*} 
{}From \eqref{first_expansion}, we obtain
\begin{eqnarray} \label{fd_formula_1}
 \frac{\epsilon^{k+1} - \epsilon^{k}}{\Delta t} + \Delta t \, T_1^k = \frac{\Phi^{k+1} + \Phi^{k}}{2}.
\end{eqnarray}
Multiplying \eqref{fd_formula_1} by $\mu (\Phi^{k+1} - \Phi^{k})$ and integrating over $[0, L]$ yields:
\begin{eqnarray} \label{fd_norm_part1}
&& \int_0^L{ \mu \frac{\epsilon^{k+1} - \epsilon^{k}}{\Delta t} \left(\Phi^{k+1} - \Phi^{k}\right) \;dx} \nonumber\\ 
&&= \frac{1}{2} \int_0^L{ \mu \left(\Phi^{k+1}\right)^2 \, dx} 
- \frac{1}{2} \int_0^L{ \mu \left(\Phi^{k}\right)^2 \, dx}- 
\Delta t  \int_0^L{ \mu  T_1^k \left(\Phi^{k+1} - \Phi^{k}\right)\;dx }. \nonumber \\
\end{eqnarray}
Furthermore, from \eqref{motivation-form} with $t = t_{k + \frac12}$ and Taylor's Theorem, we get $ \forall w \in \tilde{H}^2_0(0, L)$: 

\begin{align} \label{second_expansion}
 &\int^{L}_{0}
{\mu  \frac{u_t(t_{k+1},x) - u_t(t_{k},x)}{\Delta t} w \,dx} + 
 \int^{L}_{0}{ \Lambda \frac{u_{xx}(t_{k+1},x) + u_{xx}(t_{k},x)}{2} 
w_{xx} \,dx}&\nonumber  \\ 
 &+ M \frac{u_t(t_{k+1}, L) - u_t(t_{k}, L)}{\Delta t} w(L)  
+  J  \frac{u_{tx}(t_{k+1}, L) - u_{tx}(t_{k}, L)}{\Delta t}
w_{x}(L) & \nonumber \\ 
 &+ k_1 \frac{u_{x}(t_{k+1}, L) +
 u_{x}(t_{k}, L) }{2} w_x(L) + k_2 \frac{u(t_{k+1}, L) +
 u(t_{k}, L) }{2} w(L)&\nonumber \\ 
& + d_1 \frac{u_{tx}(t_{k+1}, L) +
u_{tx}(t_{k}, L)}{2} w_x(L) + d_2 \frac{u_{t}(t_{k+1}, L) +
 u_{t}(t_{k}, L) }{2} w(L) &\nonumber \\ 
& +  c_1 \cdot\frac{\zeta_1(t_{k+1}) + \zeta_1(t_k)}{2} w_x(L)
+ c_2 \cdot\frac{\zeta_2(t_{k+1}) + \zeta_2(t_k)}{2} w(L) = \Delta t \, T^k_2 (w), & \nonumber \\
&&
\end{align}
with the functional $T^k_2 \colon \tilde{H}^2_0(0, L) \rightarrow \mathbb{R}$ defined as
\begin{align} 
&T^k_2(w) = &\nonumber \\ &\int^{L}_{0}
{\mu \left( \int_{t_{k+\frac12}}^{t_{k+1}}{\frac{u_{tttt}(t,x)}{2 (\Delta t)^2} \left( t_{k+1} - t \right)^2\,dt }  + 
\int_{t_{k}}^{t_{k+\frac12}}{\frac{u_{tttt}(t,x)}{2 (\Delta t)^2} \left( t_{k} - t \right)^2\,dt } \right) w \,dx}  &\nonumber \\ 
 &  + \int^{L}_{0}{\Lambda  \left( \int_{t_{k+\frac12}}^{t_{k+1}}{\frac{u_{ttxx}(t,x)}{2 \Delta t} \left( t_{k+1} - t \right)\,dt }  - 
\int_{t_{k}}^{t_{k+\frac12}}{\frac{u_{ttxx}(t,x)}{2 \Delta t} \left( t_{k} - t \right)\,dt } \right) w_{xx} \,dx} & \nonumber \\ 
& + M \left( \int_{t_{k+\frac12}}^{t_{k+1}}{\frac{u_{tttt}(t,L)}{2 (\Delta t)^2} \left( t_{k+1} - t \right)^2\,dt }  + 
\int_{t_{k}}^{t_{k+\frac12}}{\frac{u_{tttt}(t,L)}{2 (\Delta t)^2} \left( t_{k} - t \right)^2 \,dt } \right) w(L) &  \nonumber \\ 
& + J \left( \int_{t_{k+\frac12}}^{t_{k+1}}{\frac{u_{ttttx}(t,L)}{2 (\Delta t)^2} \left( t_{k+1} - t \right)^2 \,dt }  + 
\int_{t_{k}}^{t_{k+\frac12}}{\frac{u_{ttttx}(t,L)}{2 (\Delta t)^2} \left( t_{k} - t \right)^2\,dt } \right) w_x(L)& \nonumber \\ 
& + k_1 \left( \int_{t_{k+\frac12}}^{t_{k+1}}{\frac{u_{ttx}(t,L)}{2 \Delta t} \left( t_{k+1} - t \right)\,dt }  - 
\int_{t_{k}}^{t_{k+\frac12}}{\frac{u_{ttx}(t,L)}{2 \Delta t} \left( t_{k} - t \right)\,dt } \right) w_x(L) & \nonumber \\ 
&  + k_2 \left( \int_{t_{k+\frac12}}^{t_{k+1}}{\frac{u_{tt}(t,L)}{2 \Delta t} \left( t_{k+1} - t \right) \,dt }  -
\int_{t_{k}}^{t_{k+\frac12}}{\frac{u_{tt}(t,L)}{2 \Delta t} \left( t_{k} - t \right)\,dt } \right) w(L) & \nonumber 
\end{align}
\begin{align} \label{T_2_def}
&  + d_1 \left( \int_{t_{k+\frac12}}^{t_{k+1}}{\frac{u_{tttx}(t,L)}{2 \Delta t} \left( t_{k+1} - t \right)\,dt }  -
\int_{t_{k}}^{t_{k+\frac12}}{\frac{u_{tttx}(t,L)}{2 \Delta t} \left( t_{k} - t \right)\,dt } \right) w_x(L) & \nonumber \\ 
&  + d_2 \left( \int_{t_{k+\frac12}}^{t_{k+1}}{\frac{u_{ttt}(t,L)}{2 \Delta t} \left( t_{k+1} - t \right)\,dt }  - 
\int_{t_{k}}^{t_{k+\frac12}}{\frac{u_{ttt}(t,L)}{2 \Delta t} \left( t_{k} - t \right) \,dt } \right) w(L) & \nonumber \\  
&  + c_1 \cdot \left( \int_{t_{k+\frac12}}^{t_{k+1}}{\frac{(\zeta_1)_{tt}(t)}{2 \Delta t} \left( t_{k+1} - t \right)\,dt }  - 
\int_{t_{k}}^{t_{k+\frac12}}{\frac{(\zeta_1)_{tt}(t)}{2 \Delta t} \left( t_{k} - t \right)\,dt } \right) w_x(L) & \nonumber \\  
&  + c_2 \cdot  \left( \int_{t_{k+\frac12}}^{t_{k+1}}{\frac{(\zeta_2)_{tt}(t)}{2 \Delta t} \left( t_{k+1} - t \right)\,dt }  - 
\int_{t_{k}}^{t_{k+\frac12}}{\frac{(\zeta_2)_{tt}(t)}{2 \Delta t} \left( t_{k} - t \right)\,dt } \right) w(L). & \nonumber \\ &&
\end{align}
Now, from \eqref{CN__2} and \eqref{second_expansion} follows $\forall w_h \in W_h$:
\begin{equation} \label{fd_formula_2}
\begin{array}{l}
  \int^{L}_{0}
{\mu \frac{\Phi^{k+1}- \Phi^{k}}{\Delta t} w_h \,dx} +  
 \int^{L}_{0}{\Lambda \frac{\epsilon_{xx}^{k+1} + \epsilon_{xx}^k}{2} 
(w_h)_{xx} \,dx}  \\ 
 + M \frac{\Phi^{k+1}(L) - \Phi^{k}(L)}{\Delta t} (w_h)(L)  
+  J  \frac{\Phi^{k+1}_x(L) - \Phi^{k}_x(L)}{\Delta t}
(w_h)_{x}(L)  \\ 
 + k_1 \frac{\epsilon^{k+1}_x(L) + \epsilon^{k}_x(L)}{2} (w_h)_x(L) + 
k_2 \frac{\epsilon^{k+1}(L) + \epsilon^{k}(L) }{2} w_h(L) \\ 
 + d_1 \frac{\Phi^{k+1}_x(L) + \Phi^{k}_x(L)}{2} (w_h)_x(L) + 
d_2 \frac{\Phi^{k+1}(L) +  \Phi^{k}(L)}{2} w_h(L) \\ 
+  c_1 \cdot\frac{\zeta_{e,1}^{k+1} + \zeta_{e,1}^k}{2} (w_h)_x(L)
+ c_2 \cdot\frac{\zeta_{e,2}^{k+1} + \zeta_{e,2}^k}{2} w_h(L) \\ 
= -\Delta t \, T^k_2 (w_h) + G^{k}_1(w_h), 
\end{array}
\end{equation}
where the functional $G^{k}_1(w_h)$ is given by
\begin{equation} \label{G_n1}
\begin{array}{l}
G^{k}_1(w_h) := \int^{L}_{0}
{\mu  \frac{u^e_t(t_{k+1},x) - u^e_t(t_{k},x)}{\Delta t} w_h \,dx}   \\ 
 + M \frac{u^e_t(t_{k+1}, L) - u^e_t(t_{k}, L)}{\Delta t} w_h(L)  
+  J  \frac{u^e_{tx}(t_{k+1}, L) - u^e_{tx}(t_{k}, L)}{\Delta t}
(w_h)_{x}(L) \\ 
 + d_1 \frac{u^e_{tx}(t_{k+1}, L) +
u^e_{tx}(t_{k}, L)}{2} (w_h)_x(L) + d_2 \frac{u^e_{t}(t_{k+1}, L) +
 u^e_{t}(t_{k}, L) }{2} w_h(L). 
\end{array}
\end{equation}
A Taylor expansion of $\zeta_j$ about $t_{k+\frac12}$ yields with \eqref{zeta-ODE}:
\begin{equation} \label{third_expansion}
\begin{array}{l}
\frac{\zeta_1(t_{k+1}) - \zeta_1(t_{k})}{\Delta t} - A_1 \frac{\zeta_1(t_{k+1}) + \zeta_1(t_{k})}{2} - b_1 \frac{u_{tx}(t_{k+1}, L) 
+ u_{tx}(t_{k}, L)}{2} = \Delta t \, T_3^k, \\
\frac{\zeta_2(t_{k+1}) - \zeta_2(t_{k})}{\Delta t} - A_2 \frac{\zeta_2(t_{k+1}) + \zeta_2(t_{k})}{2} - b_2 \frac{u_{t}(t_{k+1}, L) 
+ u_{t}(t_{k}, L)}{2} = \Delta t \, T_4^k ,
\end{array}
\end{equation}
with
\begin{eqnarray}
T_3^k & = & \int_{t_{k+\frac12}}^{t_{k+1}}{\frac{(\zeta_1)_{ttt}(t)}{2 (\Delta t)^2} \left( t_{k+1} - t \right)^2\,dt } 
+ \int_{t_{k}}^{t_{k+\frac12}}{\frac{(\zeta_1)_{ttt}(t)}{2 (\Delta t)^2} \left( t_{k} - t \right)^2\,dt } \nonumber \\
 &  & - A_1 \left( \int_{t_{k + \frac12}}^{t_{k +1 }}{\frac{(\zeta_1)_{tt}(t)}{2 \Delta t} \left( t_{k+1} - t \right)\,dt } 
- \int_{t_{k}}^{t_{k+\frac12}}{\frac{(\zeta_1)_{tt}(t)}{2 \Delta t} \left( t_{k} - t \right)\,dt }\right) \nonumber \\ 
& & - b_1 \left( \int_{t_{k + \frac12}}^{t_{k+1}}{\frac{u_{tttx}(t,L)}{2 \Delta t} \left( t_{k+1} - t \right)\,dt }  - 
\int_{t_{k}}^{t_{k+\frac12}}{\frac{u_{tttx}(t, L)}{2 \Delta t} \left( t_{k} - t \right)\,dt }\right), \nonumber \\
T_4^k & = & \int_{t_{k+\frac12}}^{t_{k+1}}{\frac{(\zeta_2)_{ttt}(t)}{2 (\Delta t)^2} \left( t_{k+1} - t \right)^2\,dt } 
+ \int_{t_{k}}^{t_{k+\frac12}}{\frac{(\zeta_2)_{ttt}(t)}{2 (\Delta t)^2} \left( t_{k} - t \right)^2\,dt } \nonumber \\
 &  & - A_2 \left( \int_{t_{k+ \frac12}}^{t_{k+1 }}{\frac{(\zeta_2)_{tt}(t)}{2 \Delta t} \left( t_{k+1} - t \right)\,dt } 
- \int_{t_{k}}^{t_{k+\frac12}}{\frac{(\zeta_2)_{tt}(t)}{2 \Delta t} \left( t_{k} - t \right)\,dt }\right)\nonumber \\ 
& & - b_2 \left( \int_{t_{k + \frac12}}^{t_{k+1}}{\frac{u_{ttt}(t,L)}{2 \Delta t} \left( t_{k+1} - t \right)\,dt }  - 
\int_{t_{k}}^{t_{k+\frac12}}{\frac{u_{ttt}(t, L)}{2 \Delta t} \left( t_{k} - t \right)\,dt }\right). \nonumber
\end{eqnarray} 
Using \eqref{CN__3}, \eqref{CN__4}, and \eqref{third_expansion}, we get
\begin{equation} \label{fd_formula_3}
\begin{array}{l}
\frac{\zeta^{k+1}_{e,1} - \zeta^{k}_{e,1}}{\Delta t} - A_1 \frac{\zeta^{k+1}_{e,1} + \zeta^{k}_{e,1}}{2} - b_1 
\frac{\Phi^{k+1}_x(L) + \Phi^{k}_x(L)}{2} = -\Delta t \, T_3^k - G^{k}_2, \\
\frac{\zeta^{k+1}_{e,2} - \zeta^{k}_{e,2}}{\Delta t} - A_2 \frac{\zeta^{k+1}_{e,2} + \zeta^{k}_{e,2}}{2} - b_2 
\frac{\Phi^{k+1}(L) + \Phi^{k}(L)}{2} = -\Delta t \, T_4^k - G^{k}_3,
\end{array}
\end{equation}
with
\begin{eqnarray*}
G_2^k & = &  b_1 \frac{u^e_{tx}(t_{k+1}, L) + u^e_{tx}(t_{k}, L)}{2},\\
G_3^k & = & b_2 \frac{u^e_{t}(t_{k+1}, L) + u^e_{t}(t_{k}, L)}{2}.
\end{eqnarray*} 
In \eqref{fd_formula_2} we now take  $w_h := \Delta t \frac{\Phi^{k+1} + \Phi^{k}}{2} \in W_h$, due to \eqref{fd_formula_1}. 
Using \eqref{fd_norm_part1} and \eqref{fd_formula_3}, yields:
\begin{eqnarray} \label{fd_norm_part2}
\| z^{k+1}_e \|^2 - \| z^{k}_e \|^2 & = & - (\Delta t)^2 \frac{1}{2} \nonumber
\int_{0}^{L}{\Lambda \left( \epsilon^{k+1}_{xx} + \epsilon^{k}_{xx} \right) (T^k_1)_{xx} \;dx} + 
\frac{\Delta t}{2} G^k_1(\Phi^{k+1} + \Phi^{k}) \\ &-& (\Delta t)^2 \left( k_1
 \frac{\epsilon_x^{k+1}(L) + \epsilon_x^{k}(L)}{2} (T_1^k)_x(L)
+ k_2 \frac{\epsilon^{k+1}(L) + \epsilon^{k}(L)}{2} T_1^k(L) \right)\nonumber \\
& - &\frac{\Delta t}{2} \left( q_1 \frac{\zeta^{k+1}_{e,1} + \zeta^{k}_{e,1}}{2} + 
\tilde{\delta}_1 \frac{\Phi^{k+1}_x(L) + \Phi^{k}_x(L)}{2} \right)^2 \nonumber \\
& - & \Delta t \delta_1 \left( \frac{\Phi^{k+1}_x(L) + \Phi^{k}_x(L)}{2}\right)^2 - 
\Delta t \frac{\epsilon_1}{2} \frac{\zeta_{e,1}^{k+1} + \zeta_{e,1}^{k}}{2} \cdot P_1 
\frac{\zeta_{e,1}^{k+1} + \zeta_{e,1}^{k}}{2} \nonumber \\
& - & P_1 \frac{\zeta_{e,1}^{k+1} + \zeta_{e,1}^{k}}{2} \cdot \left( (\Delta t)^2 T^k_3 + \Delta t \, G^k_2\right)
\nonumber \\
& - &\frac{\Delta t}{2} \left( q_2 \frac{\zeta^{k+1}_{e,2} + \zeta^{k}_{e,2}}{2} + 
\tilde{\delta}_2 \frac{\Phi^{k+1}(L) + \Phi^{k}(L)}{2} \right)^2 \nonumber \\
& - & \Delta t \delta_2 \left( \frac{\Phi^{k+1}(L) + \Phi^{k}(L)}{2}\right)^2 - 
\Delta t \frac{\epsilon_2}{2} \frac{\zeta_{e,2}^{k+1} + \zeta_{e,2}^{k}}{2} \cdot P_2 
\frac{\zeta_{e,2}^{k+1} + \zeta_{e,2}^{k}}{2} \nonumber \\
& - & P_2 \frac{\zeta_{e,2}^{k+1} + \zeta_{e,2}^{k}}{2} \cdot \left( (\Delta t)^2 T^k_4 + \Delta t \, G^k_3\right)
 \nonumber \\ &-& \frac12 (\Delta t)^2 T^k_2(\Phi^{k+1} + \Phi^{k}). \nonumber
\end{eqnarray}
Therefore,
\begin{eqnarray} \label{fd_norm_part3}
\| z^{k+1}_e \|^2 - \| z^{k}_e \|^2 & \le & - (\Delta t)^2 \frac{1}{2} \nonumber
\int_{0}^{L}{\Lambda \left( \epsilon^{k+1}_{xx} + \epsilon^{k}_{xx} \right) (T^k_1)_{xx} \;dx} + 
\frac{\Delta t}{2} G^k_1(\Phi^{k+1} + \Phi^{k}) \\ &-& (\Delta t)^2 \left( k_1
 \frac{\epsilon_x^{k+1}(L) + \epsilon_x^{k}(L)}{2} (T_1^k)_x(L)
+ k_2 \frac{\epsilon^{k+1}(L) + \epsilon^{k}(L)}{2} T_1^k(L) \right)\nonumber \\
& - & P_1 \frac{\zeta_{e,1}^{k+1} + \zeta_{e,1}^{k}}{2} \cdot \left( (\Delta t)^2 T^k_3 + \Delta t \, G^k_2\right)
\nonumber \\
& - & P_2 \frac{\zeta_{e,2}^{k+1} + \zeta_{e,2}^{k}}{2} \cdot \left( (\Delta t)^2 T^k_4 + \Delta t \, G^k_3\right)
 \nonumber \\ &-& \frac12 (\Delta t)^2 T^k_2(\Phi^{k+1} + \Phi^{k}).
\end{eqnarray}
Next, from \eqref{G_n1} follows:
\begin{eqnarray} 
|G^{k}_1(\Phi^{k+1} + \Phi^k)| &\le& C \left( \|
\frac{u^e_t(t_{k+1},x) - u^e_t(t_{k},x)}{\Delta t} \|_{L^2}^2 + \| \Phi^{k+1} + \Phi^k \|_{L^2}^2  \right.\nonumber \\ 
& +& | \frac{u^e_t(t_{k+1}, L) - u^e_t(t_{k}, L)}{\Delta t}|^2  
+   | \frac{u^e_{tx}(t_{k+1}, L) - u^e_{tx}(t_{k}, L)}{\Delta t}|^2  \nonumber \\  
 &+ & |\frac{u^e_{tx}(t_{k+1}, L) + u^e_{tx}(t_{k}, L)}{2}|^2 +  |\frac{u^e_{t}(t_{k+1}, L) +
 u^e_{t}(t_{k}, L) }{2}|^2 \nonumber\\ &+& \left. | \Phi^{k+1}(L) + \Phi^k(L)|^2 + | \Phi_x^{k+1}(L) + \Phi_x^{k}(L)|^2 
\vphantom{ \|\frac{u^e_t(t_{k+1},x) - u^e_t(t_{k},x)}{\Delta t} \|_{L^2}^2 }\right) 
\end{eqnarray}
\begin{eqnarray} \label{G_1 estimate}
 &\le& C \left( \| \Phi^{k+1} + \Phi^k \|_{L^2}^2 + 
| \Phi^{k+1}(L) + \Phi^k(L)|^2 + | \Phi_x^{k+1}(L) + \Phi_x^{k}(L)|^2 \right.\nonumber \\ 
& +& \frac{1}{\Delta t} \int_{t_k}^{t_k+1}{\| u^e_{tt}(t) \|^2_{L^2} + |u^e_{tt}(t,L)|^2 +|u^e_{ttx}(t,L)|^2 \, dt}
+  \|u^e_t \|_{C([t_{k},t_{k+1}]; H^2)}^2 \left. \right). \nonumber \\ 
\end{eqnarray}
It can easily be seen that 
\begin{equation} \label{T_1 estimate}
 \|T^k_1\|^2_{H^2} \le \Delta t \int_{t_k}^{t_{k+1}}{\|\breve u_{ttt}(t) \|^2_{H^2} \, dt} \le C 
\Delta t \int_{t_k}^{t_{k+1}}{\| u_{ttt}(t) \|^2_{H^2} \, dt},
\end{equation}
\begin{equation} \label{T_3 estimate}
 \|T^k_3\|^2 \le  C 
\Delta t \int_{t_k}^{t_{k+1}}{\| u_{ttt}(t) \|^2_{H^2} + \|(\zeta_1)_{tt}\|^2 + \|(\zeta_1)_{ttt}\|^2 \, dt},
\end{equation}
\begin{equation} \label{T_4 estimate}
 \|T^k_4\|^2 \le C 
\Delta t \int_{t_k}^{t_{k+1}}{\| u_{ttt}(t)\|^2_{H^1} + \|(\zeta_2)_{tt}\|^2 + \|(\zeta_2)_{ttt}\|^2 \, dt},
\end{equation}
and
\begin{eqnarray} \label{T_2 estimate}
T^k_2(\Phi^k) & \le & C \left(  \| \Phi^k \|^2_{L^2} + | \Phi^k(L) |^2 + | \Phi^k_x(L) |^2 
+ \vphantom{\int_{t_k}^{t_{k+1}}{\|u_{tt} (t)\|_{H^4}^2}} \nonumber \right. \\
& + &\Delta t \int_{t_k}^{t_{k+1}}{\|u_{tt} (t)\|_{H^4}^2 + 
\|u_{ttt}(t) \|_{H^2}^2 + \|u_{tttt}(t) \|_{H^2}^2 \, dt} \nonumber \\
& + & \left. \Delta t \int_{t_k}^{t_{k+1}}{\|(\zeta_1)_{tt} (t)\|^2 + \|(\zeta_2)_{tt} (t)\|^2 \, dt} \right).
\end{eqnarray} 
{}For the above estimate, we rewrote the second term of $T^k_2(\Phi^k)$ in \eqref{T_2_def} as:
\begin{align}
&\int^{L}_{0}{\left( \int_{t_{k+\frac12}}^{t_{k+1}}{\frac{u_{ttxx}(t,x)}{2 \Delta t} \left( t_{k+1} - t \right)\,dt }  - 
\int_{t_{k}}^{t_{k+\frac12}}{\frac{u_{ttxx}(t,x)}{2 \Delta t} \left( t_{k} - t \right)\,dt } \right) \Phi^k_{xx} \,dx} \nonumber &\\
=  &\int_{t_{k+\frac12}}^{t_{k+1}}{\frac{t_{k+1} - t}{2 \Delta t} \left( u_{ttxx}(t,L) \Phi^k_x(L) -  u_{ttxxx}(t,L) \Phi^k(L) + 
\int_{0}^{L}{u_{ttxxxx}(t,x) \Phi^k \, dx}\right) 
\,dt} & \nonumber \\
- &\int_{t_{k}}^{t_{k+\frac12}}{\frac{t_{k} - t}{2 \Delta t} \left( u_{ttxx}(t,L) \Phi^k_x(L) -  u_{ttxxx}(t,L) \Phi^k(L) + 
\int_{0}^{L}{u_{ttxxxx}(t,x) \Phi^k \, dx}\right) 
\,dt}, & \nonumber
\end{align} using $\Phi^k(0) = \Phi^k_x(0) = 0$, and then the Sobolev embedding Theorem.
{}From \eqref{fd_norm_part2} -- \eqref{T_2 estimate}, now follows:
\begin{eqnarray} \label{fd_norm_part4}
\| z^{k+1}_e \|^2 - \| z^{k}_e \|^2 & \le & C \left( \Delta t (\| z^{k+1}_e \|^2 +  \| z^{k}_e \|^2)\nonumber 
+  \Delta t  \|u^e_t \|_{C([t_{k},t_{k+1}]; H^2)}^2   \vphantom{\int_{t_k}^{t_{k+1}}{\| u^e_{tt}(t) \|^2_{L^2} + 
|u^e_{tt}(t,L)|^2 +|u^e_{ttx}(t,L)|^2 \, dt}}  \right. \\ &+& \int_{t_k}^{t_{k+1}}{\| u^e_{tt}(t) \|^2_{L^2} + 
|u^e_{tt}(t,L)|^2 +|u^e_{ttx}(t,L)|^2 \, dt} \nonumber \\ & + & (\Delta t)^4 \sum_{i=1}^{2}{
\int_{t_k}^{t_{k+1}}{ \|(\zeta_i)_{tt}\|^2 + \|(\zeta_i)_{ttt}\|^2 \, dt}} \nonumber \\
& + & \left.(\Delta t)^4 \int_{t_k}^{t_{k+1}}{\|u_{tt} (t)\|_{H^4}^2 + 
\|u_{ttt}(t) \|_{H^2}^2 + \|u_{tttt}(t) \|_{H^2}^2 \, dt}  \right).\nonumber \\
\end{eqnarray}
Let now $n \in \{1, \dots,  S\}$. Assuming $\Delta t \le \frac{1}{2 C}$ (with $C$ from \eqref{fd_norm_part4}), 
and summing \eqref{fd_norm_part4} over $k \in \{0, \dots, n\}$, gives:
\begin{eqnarray} \label{fd_norm_part5}
\frac12 \| z^{n+1}_e \|^2 & \le & \frac32 \| z^{0}_e \|^2 + C \left( \Delta t \sum_{k=1}^{n}{\| z^{k}_e \|^2}\nonumber 
 +   \|u^e_t \|_{C([0, T]; H^2)}^2  +  \right. \| u^e_{tt} \|^2_{L^2(0,T; H^2)}  \nonumber \\ 
& + & (\Delta t)^4 \left[ 
\sum_{i = 1}^2{\|(\zeta_i)_{tt} (t)\|_{L^2(0, T; \mathbb{R}^n)}^2 + 
\|(\zeta_i)_{ttt} (t)\|_{L^2(0, T; \mathbb{R}^n)}^2}
\right.  \nonumber \\
& + & \left. \left. \vphantom{\sum_{i = 1}^2{\|(\zeta_i)_{tt} (t)\|_{L^2(0, T; \mathbb{R}^n)}^2 + 
\|(\zeta_i)_{ttt} (t)\|_{L^2(0, T; \mathbb{R}^n)}^2}}
\|u_{tt} (t)\|_{L^2(0, T; H^4)}^2 + 
\|u_{ttt}(t) \|_{L^2(0, T; H^2)}^2 + \|u_{tttt}(t) \|_{L^2(0, T; H^2)}^2 
\right] \right).\nonumber \\
\end{eqnarray}
Finally, using the discrete-in-time Gronwall inequality and \eqref{first_expansion}, we obtain:
\begin{eqnarray} \label{fd_norm_part6}
 \| z^{n+1}_e \|^2 & \le &  C \left( \| z^{0}_e \|^2 \nonumber 
\vphantom{\sum_{i = 1}^2{\|(\zeta_i)_{tt} (t)\|_{L^2(0, T; \mathbb{R}^n)}^2 + 
\|(\zeta_i)_{ttt} (t)\|_{L^2(0, T; \mathbb{R}^n)}^2}}
 +   h^4 \left( \|u_t \|_{C([0, T]; H^4)}^2  +   \| u_{tt} \|^2_{L^2(0,T; H^4)} \right) \right. \nonumber \\ 
& + & (\Delta t)^4 \left[ 
\sum_{i = 1}^2{\|(\zeta_i)_{tt} (t)\|_{L^2(0, T; \mathbb{R}^n)}^2 + 
\|(\zeta_i)_{ttt} (t)\|_{L^2(0, T; \mathbb{R}^n)}^2}
\right.  \nonumber \\
& + & \left. \left. \vphantom{\sum_{i = 1}^2{\|(\zeta_i)_{tt} (t)\|_{L^2(0, T; \mathbb{R}^n)}^2 + 
\|(\zeta_i)_{ttt} (t)\|_{L^2(0, T; \mathbb{R}^n)}^2}}
\|u_{tt} (t)\|_{L^2(0, T; H^4)}^2 + 
\|u_{ttt}(t) \|_{L^2(0, T; H^2)}^2 + \|u_{tttt}(t) \|_{L^2(0, T; H^2)}^2 
\right] \right).\nonumber \\
\end{eqnarray} 
The result now follows from \eqref{fd_norm_part6}, \eqref{projection_estimates}, and the triangle inequality. 
\end{proof}

\vspace{\baselineskip}

\end{document}